\documentclass[12pt,reqno]{amsart}
\usepackage[margin=1in]{geometry}
\usepackage{amsmath,amssymb,amsthm,graphicx,amsxtra, setspace}
\usepackage[utf8]{inputenc}
\usepackage{mathrsfs}
\usepackage{alltt}
\usepackage{relsize}
\usepackage{hyperref}
\usepackage{aliascnt}
\usepackage{tikz}
\usepackage{mathtools}
\usepackage{multicol}
\usepackage{upgreek}
\usepackage{graphicx,type1cm,eso-pic,color}
\allowdisplaybreaks
\usepackage{scalerel,stackengine}
\stackMath
\newcommand\reallywidehat[1]{%
	\savestack{\tmpbox}{\stretchto{%
			\scaleto{%
				\scalerel*[\widthof{\ensuremath{#1}}]{\kern-.6pt\bigwedge\kern-.6pt}%
				{\rule[-\textheight/2]{1ex}{\textheight}}
			}{\textheight}%
		}{0.5ex}}%
	\stackon[1pt]{#1}{\tmpbox}%
}
\parskip 1ex

\numberwithin{equation}{section}

\newtheorem{theorem}{Theorem}[section]

\newaliascnt{lemma}{theorem}
\newtheorem{lemma}[lemma]{Lemma}
\aliascntresetthe{lemma} 

\newaliascnt{proposition}{theorem}
\newtheorem{proposition}[proposition]{Proposition}
\aliascntresetthe{proposition}

\newaliascnt{assumption}{theorem}

\aliascntresetthe{assumption}

\newaliascnt{corollary}{theorem}

\aliascntresetthe{corollary}

\newaliascnt{definition}{theorem}
\newtheorem{definition}[definition]{Definition}
\aliascntresetthe{definition}

\newaliascnt{example}{theorem}

\aliascntresetthe{example}

\newaliascnt{remark}{theorem}
\newtheorem{remark}[remark]{Remark}
\aliascntresetthe{remark}

\newaliascnt{hypothesis}{theorem}

\aliascntresetthe{hypothesis}

\newaliascnt{property}{theorem}

\aliascntresetthe{property}

\let\originalleft\left
\let\originalright\right
\renewcommand{\left}{\mathopen{}\mathclose\bgroup\originalleft}
\renewcommand{\right}{\aftergroup\egroup\originalright}

\makeatletter
\newcommand{\doublewidetilde}[1]{{%
		\mathpalette\double@widetilde{#1}%
}}
\newcommand{\double@widetilde}[2]{%
	\sbox\z@{$\m@th#1\widetilde{#2}$}%
	\ht\z@=.9\ht\z@
	\widetilde{\box\z@}%
}
\makeatother


\renewcommand{\d}{\/\mathrm{d}\/}

\def\w{\textbf{W}^{\varepsilon}_{{\theta}^{\varepsilon}}}

\def\S{\mathrm{S}}
\def\T{\mathrm{T}}
\def\L{\mathrm{L}}
\def\A{\mathrm{A}}
\def\I{\mathrm{I}}
\def\F{\mathrm{F}}
\def\C{\mathrm{C}}

\def\h{\mathbf{h}}
\def\J{\mathrm{J}}
\def\B{\mathrm{B}}
\def\D{\mathrm{D}}
\def\y{\mathbf{y}}
\def\Y{\mathbb{Y}}
\def\Z{\mathrm{Z}}

\def\X{\mathbb{X}}
\def\x{\mathbf{x}}
\def\z{\mathbf{z}}
\def\v{\mathbf{v}}
\def\V{\mathbb{V}}
\def\w{\mathbf{w}}
\def\W{\mathrm{W}}
\def\G{\mathbb{G}}

\def\M{\mathrm{M}}

\def\no{\nonumber}
\def\V{\mathbb{V}}

\def\u{\mathbf{u}}
\def\H{\mathbb{H}}
\def\n{\mathbf{n}}

\newcommand{\R}{\mathbb{R}}

\renewcommand{\d}{\/\mathrm{d}\/}


\newcommand{\Addresses}{{
		\footnote{
			\emph{Keywords: }{Steady state nonlocal Cahn-Hilliard-Navier-Stokes systems, maximal monotone operator, pseudo-monotonicity, exponential stability}
			
			\emph{2020 Mathematics Subject Classification: }{49J20, 35Q35, 76D03.}

			\noindent \textsuperscript{1}School of Mathematics,
			Indian Institute of Science Education and Research Thiruvananthapuram (IISER-TVM),
			Maruthamala PO, Vithura, Thiruvananthapuram, Kerala, 695 551, INDIA.  \par\nopagebreak \noindent
			\textit{e-mail: }\texttt{tania0003@gmail.com} (Tania Biswas)
			
			\textit{e-mail: }\texttt{sheetal@iisertvm.ac.in} (Sheetal Dharmatti)
			
			\textit{e-mail: }\texttt{plnmn915@iisertvm.ac.in}	(Perisetti Lakshmi Naga Mahendranath)
			
			*Corresponding Author

			\noindent \textsuperscript{2}Department of Mathematics,
			Indian Institute of Technology Roorkee-IIT Roorkee,
			Haridwar Highway, Roorkee,
			Uttarakhand 247667,  \par\nopagebreak \noindent
			\textit{e-mail: }\texttt{maniltmohan@gmail.com, manilfma@iitr.ac.in} (Manil T. Mohan)

}}}
\begin{document}
	
	\title[Stationary Nonlocal Cahn-Hilliard-Navier-Stokes System]{On the Stationary Nonlocal  Cahn-Hilliard-Navier-Stokes System: Existence, Uniqueness and Exponential Stability  \Addresses   }
	
	%
	%
	%
	%
	%
	%
	%
	%
	%
	%
	%
	\author[T. Biswas, S. Dharmatti, L. N. M. Perisetti and M. T. Mohan]
	{Tania Biswas\textsuperscript{1},  
		Sheetal Dharmatti\textsuperscript{1*},  
		Perisetti Lakshmi Naga Mahendranath\textsuperscript{1}
		and Manil T Mohan\textsuperscript{2}}

	\begin{abstract}
		Cahn-Hilliard-Navier-Stokes system describes  the evolution of two isothermal, incompressible, immiscible fluids in a bounded domain.  In this work, we consider the stationary nonlocal Cahn-Hilliard-Navier-Stokes system in two and three dimensions with singular potential. We prove the existence of a weak solution for the system using pseudo-monotonicity arguments and Browder's theorem. Further, we establish the uniqueness and regularity results for the weak solution of the stationary nonlocal Cahn-Hilliard-Navier-Stokes system for constant mobility parameter and viscosity.  Finally, in two dimensions, we establish that the stationary solution  is exponentially stable (for convex singular potentials) under suitable conditions on mobility parameter and viscosity.
	\end{abstract}

	\maketitle
	
	\section{Introduction}\label{sec1}\setcounter{equation}{0} 	
	We  consider a mathematical model   of two isothermal, incompressible, immiscible fluids evolving in two or three dimensional bounded domains. This system of equations is well known as  \emph{Cahn-Hilliard-Navier-Stokes(CHNS) system} or is also known as $\mathrm{H}$-Model. Cahn-Hilliard-Navier-Stokes model describes the chemical interactions between the two phases at the interface, which is achieved using a Cahn-Hilliard approach, and also the hydrodynamic properties of the mixture which is obtained by using Navier-Stokes equations with surface tension terms acting at the interface (see \cite{MR2580516}). If the two fluids have the same constant density, then the temperature differences are negligible and the diffusive interface between the two phases has a small but non-zero thickness, and thus we have the well-known ``H-Model" (see \cite{MR1404829}). The equations for evolution of the Cahn-Hilliard-Navier-Stokes/H-model are given by
	\begin{equation}\label{1.1}
	\left\{
	\begin{aligned}
	\varphi_t + \u\cdot \nabla \varphi &=\text{ div} (m(\varphi) \nabla \mu), \ \text{ in } \  \Omega \times (0,T),\\
	\mu &= a \varphi - \J\ast \varphi + \F'(\varphi),\\
	\rho \u_t - 2 \text{div } ( \nu(\varphi) \mathrm{D}\u ) + (\u\cdot \nabla )\u + \nabla \uppi &= \mu \nabla \varphi + \mathbf{h}, \ \text{ in } \ \Omega \times (0,T),\\
	\text{div }\u&= 0, \ \text{ in } \  \Omega \times (0,T), \\
	\frac{\partial \mu}{\partial\mathbf{n}} &= 0 \ , \u=0 \ \text{ on } \ \partial \Omega \times [0,T],\\
	\u(0) &= \u_0, \  \ \varphi(0) = \varphi _0 \ \text{ in } \ \Omega, 
	\end{aligned}   
	\right.
	\end{equation}
	where $\Omega\subset \mathbb{R}^n,\ n=2,3$  and  $\u(x,t)$  and $\varphi(x,t)$ denote the average velocity of the fluid and  the relative concentration respectively. These equations are of the nonlocal type because of the presence of the term $\J$, which is the \emph{spatial-dependent internal kernel} and $\J \ast \varphi$ denotes the spatial convolution over $\Omega$. The mobility parameter is denoted by
	$m$, $\mu$ is the \emph{chemical potential}, $\uppi$ is the \emph{pressure}, $a$ is defined by $a(x) := \int _\Omega \J(x-y) \d y$, $\F$ is the configuration potential, which accounts for the presence of two phases, $\nu$ is the \emph{kinematic viscosity} and $\mathbf{h}$ is the external forcing term acting in the mixture.   The strain tensor $\mathrm{D}\u$ is the symmetric part of the gradient of the flow velocity vector, i.e., $\mathrm{D}\u$ is $\frac{1}{2}\left(\nabla\u+(\nabla\u)^{\top}\right)$. The chemical potential $\mu$ is the first variation of the free energy functional:
	\begin{align*}
	\mathcal{E}(\varphi) := \frac{1}{4}\int_\Omega \int_\Omega \mathrm{J}(x-y)(\varphi(x)-\varphi(y))^2\d x  \d y + \int_\Omega \mathrm{F}(\varphi(x))\d x.
	\end{align*} 
	Various simplified models of this system are studied by several mathematicians and physicists.   The local version of the system (see \cite{MR1700669, MR2580516})  is obtained by replacing  $\mu $ equation by 
	$ \mu =- \Delta \varphi + \F' (\varphi) $, which is the first variation of the free energy functional
	$$\mathcal{E}(\varphi) := \int_{\Omega} \left( \frac{1}{2} | \nabla \varphi(x)|^2 + \F(\varphi (x))\right)\, \d x.$$
	Another simplification appeared in the literature is to assume the constant mobility parameter and/or constant viscosity. The solvability, uniqueness and regularity of the system (\ref{1.1}) and of other simplified Cahn-Hilliard-Navier-Stokes models is well studied in the literature though most of the works are recent ones. Typically two types of potentials  are considered in the literature, regular potential as well as  singular potential. In general, singular potentials are difficult to handle and in such cases, $\F(\cdot)$ is usually approximated by polynomials in order to make the mathematical analysis easier (see \cite{MR2347608}). 
	
	The nonlocal Cahn-Hilliard-Navier-Stokes system with regular potential has been analysed by M. Grasseli et al. in \cite{MR2834896, MR3518604, MR3090070}, etc. Taking advantage of  the results for the regular potential, they have also studied in \cite{MR3019479}, the existence of weak solution of the system with singular potential. Furthermore, they proved the existence of the global attractor in 2D and trajectory attractor in 3D. Strong solutions for the nonlocal Cahn-Hilliard-Navier-Stokes system was discussed in \cite{MR3903266}. Uniqueness results for the same were established in \cite{MR3518604}. In \cite{MR3688414}, authors considered the nonlocal Cahn-Hilliard equation with singular potential and constant mobility and studied well-posedness and regularity results. Moreover, they established the strict separation property in dimension 2. Regularity results in case of degenerate mobility were studied in \cite{frigeri10regularity}. The local Cahn-Hilliard-Navier-Stokes system with singular free energies has been studied in \cite{MR2563636, MR1700669}. Further, along the application side, the optimal control of nonlocal Cahn-Hilliard-Navier-Stokes equations and robust control for local Cahn-Hilliard-Navier-Stokes equations have been addressed in the works \cite{MR3456388,MR4104524,MR3565933,MR3436705, MR4108622,MR4131779}, etc.

	Solvability results for the stationary nonlocal Cahn-Hilliard equations with singular potential were discussed in \cite{MR2108884}, whereas authors in \cite{MR2347608} proved the convergence to the equilibrium solution of Cahn-Hilliard system with logarithmic free energy. The existence of the equilibrium solution for the steady state Navier-Stokes equation is well known in the literature  and can be found in the book \cite{MR1318914}.   In \cite{MR3524178}, the authors discussed the existence of a weak solution to the stationary local Cahn-Hilliard-Navier-Stokes equations. The author in \cite{MR3555135} studied a coupled Cahn-Hilliard-Navier-Stokes model with delays in a two-dimensional bounded domains and discussed the asymptotic behaviour of the weak solutions and the stability of the stationary solutions.  In this work, our main aim is to study  the well-posedness of nonlocal steady  state system corresponding to the model described in  \eqref{1.1} in dimensions $2$ and $3$ and to examine the stability properties of this solution in dimension $2$ (for convex singular potentials).    
	
	Throughout this paper, we consider $\F$ to be a singular potential.  A typical example is the logarithmic potential:
	\begin{align}\label{2}
	\F (\varphi) = \frac{\theta}{2} ((1+ \varphi) \ln (1+\varphi) + (1-\varphi) \ln (1-\varphi)) - \frac{\theta_c}{2} \varphi^2 ,\quad \varphi \in (-1,1),
	\end{align} 
	where $\theta,\theta_c>0$. The logarithmic terms are related to the entropy of the system and note that $\F$ is convex if and only if $\theta\geq \theta_c$. If $\theta\geq \theta_c$ the mixed phase is stable and if $0<\theta<\theta_c$, the mixed phase is unstable and phase separation occurs. 
	To the best of our  knowledge, the solvability results for  stationary nonlocal  Cahn-Hilliard-Navier-Stokes equations is not available in the literature. In the current work, using techniques similar to the one developed in  \cite{MR3524178},  we resolve this issue.
	We prove the existence of a weak solution to the stationary nonlocal Cahn-Hilliard-Navier-Stokes system in dimensions 2 and 3 with variable mobility and viscosity. Further, we answer the questions of uniqueness and regularity of the solution for the equations with constant viscosity and  mobility parameters. In dimensions $2$ and $3$, we show that  the weak solution possesses higher regularity. The uniqueness of weak solutions is established  under certain conditions on the viscosity and mobility parameters. Lastly, for constant viscosity and mobility parameters and convex singular potential, we establish that the strong solution of  steady state equations in dimension $2$, stabilizes exponentially.
	
	The main difficulty to tackle while obtaining these results was to handle the nonlocal term. The nonlocal term in the equation needs careful estimation. Moreover, to the best of our knowledge, apart from existence and regularity results for steady state equation, which we have obtained here  is the first work to discuss exponential stability of nonlocal Cahn-Hilliard-Navier-Stokes model. These results can be  useful to study stabilisation properties of control problems associated with the system. 
	
	Rest of the paper is organised as follows: In the next section, we explain functional setting for the solvability of stationary nonlocal Cahn-Hilliard-Navier-Stokes equations \eqref{steadysys} (given below). We define the weak formulation of our system in section \ref{sec3}. The existence of a weak solution to the nonlocal Cahn-Hilliard-Navier-Stokes equations \eqref{steadysys} is proved using pseudo-monotonicity arguments and Browder's theorem in this section (see Theorem \ref{mainthm}).  In further studies, we assume the mobility parameter and viscosity to be constant. The section \ref{sec4} is devoted to study  the uniqueness of a weak solution for the system \eqref{steadysys} and some regularity results. We establish the uniqueness of weak solutions under certain assumptions on mobility parameter and viscosity (see Theorem \ref{unique-steady}). Further, we derive some regularity results for the solution. Finally, in section \ref{se4}, we establish that the stationary solution in two dimensions is exponentially stable (see Theorem \ref{thmexp}) under certain restrictions on mobility parameter and viscosity.
	\section{Stationary Nonlocal Cahn-Hilliard-Navier-Stokes System}\label{se3}\setcounter{equation}{0}
	In this section, we consider the stationary nonlocal Cahn-Hilliard-Navier-Stokes system in two and three dimensional bounded domains. Here, we consider the case of the coefficient of kinematic viscosity and mobility parameter depending on $\varphi$. Let us  consider the following steady state system associated with the equation \eqref{1.1}:
	\begin{equation}\label{steadysys}
	\left\{
	\begin{aligned} 
	\u\cdot \nabla \varphi &= \text{ div} (m(\varphi) \nabla \mu), \ \text{ in }\ \Omega,\\
	\mu &= a \varphi - \J\ast \varphi + \F'(\varphi), \ \text{ in }\ \Omega,\\
	- 2 \text{div } ( \nu(\varphi) \mathrm{D}\u )  + (\u\cdot \nabla )\u + \nabla \uppi &= \mu \nabla \varphi + \mathbf{h}, \  \text{ in }\ \Omega,\\
	\text{div }\u&= 0, \ \text{ in }\ \Omega, \\
	\frac{\partial \mu}{\partial\mathbf{n}} &= 0 \ , \u=\mathbf{0} \ \text{ on } \ \partial \Omega,\\
	\end{aligned} 
	\right.
	\end{equation}
	with average of $\varphi$ given by $$\frac{1}{|\Omega|}\int_\Omega \varphi (x) \d x = k \in (-1,1),$$ where $|\Omega|$ is the Lebesgue measure of $\Omega$.
	Our main aim in this work is to study the existence, uniqueness, regularity  and stability of the system \eqref{steadysys}. For solvability, we formulate the problem in an abstract setup and use the well known Browder's theorem to establish the existence of a weak solution to the system \eqref{steadysys}. We further study regularity, uniqueness and exponential stability of the system with constant viscosity and mobility parameters by establishing a-priori estimates and under certain conditions on viscosity and mobility. 
	
	\subsection{Functional setting} We first explain the  functional spaces needed to obtain our main results. Let us define
	\begin{align*}
	\G_{\mathrm{div}} &:= \Big\{ \u \in \mathrm{L}^2(\Omega;\R^n) : \text{div }\u=0,\  \u\cdot \mathbf{n} \big|_{\partial \Omega}=0   \Big\}, \\
	\V_{\mathrm{div}} &:= \Big\{\u \in \mathrm{H}^1_0(\Omega;\R^n): \text{div }\u=0\Big\},\\ \mathrm{H}&:=\mathrm{L}^2(\Omega;\R),\ \mathrm{V}:=\mathrm{H}^1(\Omega;\R),
	\end{align*}
	where $n=2,3$. Let us denote $\| \cdot \|$ and $(\cdot, \cdot)$, the norm and the scalar product, respectively, on both $\mathrm{H}$ and $\G_{\mathrm{div}}$. The duality between any Hilbert space $\X$ and its topological dual $\X'$ is denoted by $_{\X'}\langle \cdot,\cdot\rangle_{\X}$. We know that $\V_{\mathrm{div}}$ is endowed with the scalar product 
	$$(\u,\v)_{\V_{\mathrm{div}} }= (\nabla \u, \nabla \v)=2(\mathrm{D}\u,\mathrm{D}\v),\ \text{ for all }\ \u,\v\in\V_{\mathrm{div}}.$$ The norm on $\V_{\mathrm{div}}$ is given by $\|\u\|_{\V_{\mathrm{div}}}^2:=\int_{\Omega}|\nabla\u(x)|^2\d x=\|\nabla\u\|^2$. In the sequel, we use the notations $\mathbb{H}^m(\Omega):=\mathrm{H}^m(\Omega;\mathbb{R}^n)=\mathrm{W}^{m,2}(\Omega;\mathbb{R}^n)$ and  $\mathrm{H}^m(\Omega):=\mathrm{H}^m(\Omega;\mathbb{R})=\mathrm{W}^{m,2}(\Omega;\mathbb{R})$ for Sobolev spaces of order $m$.

	Let us also define  
	\begin{align*}
	\mathrm{L}^2_{(k)}(\Omega)& := \left\{ f \in \mathrm{L}^2(\Omega;\R) : \frac{1}{|\Omega |} \int_\Omega f(x)\d x = k \right\},\\
	\mathrm{H}^1_{(0)} (\Omega)&:= \mathrm{H}^1(\Omega;\R) \cap\mathrm{L}^2_{(0)}(\Omega)= \left\{ f \in \mathrm{H}^1(\Omega;\R) : \int_\Omega f(x)\d x = 0 \right\}\ ,\\ \mathrm{H}^{-1}_{(0)}(\Omega)&:=\mathrm{H}_{(0)}^1(\Omega)'.
	\end{align*} 
	Note that $\mathrm{L}^2_{(0)}(\Omega)$ is a Hilbert space equipped with the usual inner product in $\mathrm{L}^2(\Omega)$. 
	Since $\Omega$ is a bounded smooth domain and the average of $f$  is zero in $\mathrm{H}^1_{(0)} (\Omega)$, using the Poincar\'e-Wirtinger inequality (see Lemma \ref{poin} below), we have $\|f\|\leq C_{\Omega}\|\nabla f\|$, for all $f\in \mathrm{H}^1_{(0)} (\Omega)$. Using this fact, one can also show that $\mathrm{H}^1_{(0)} (\Omega)$ is a Hilbert space equipped with the inner product
	\begin{align*}
	(\varphi ,\psi)_{\mathrm{H}^1_{(0)}} = (\nabla \varphi, \nabla \psi), \ \text{ for all }\ \varphi, \psi \in \mathrm{H}^1_{(0)}(\Omega) .
	\end{align*} 
	We can prove the following dense and continuous embedding:
	\begin{align} \label{zeroembedding}
	\mathrm{H}^1_{(0)} (\Omega)\hookrightarrow \mathrm{L}^2_{(0)}(\Omega)\equiv  \mathrm{L}^2_{(0)}(\Omega)' \hookrightarrow \mathrm{H}^{-1}_{(0)}(\Omega).
	\end{align}
	Note that the embedding is compact (see for example, Theorem 1, Chapter 5, \cite{MR2597943}). The projection $\mathrm{P}_0 : \mathrm{L}^2(\Omega) \rightarrow \mathrm{L}^2_{(0)}(\Omega)$ onto $\mathrm{L}^2$-space with mean value zero is defined by 
	\begin{align} \label{P0}
	\mathrm{P}_0 f := f - \frac{1}{|\Omega |} \int_\Omega f(x)\d x
	\end{align}
	for every $f \in \mathrm{L}^2(\Omega)$.

	For every $f \in \mathrm{V}'$, we denote $\overline{f}$ the average of $f$ over $\Omega$, that is, $\overline{f} := |\Omega|^{-1} {_{\mathrm{V}'}}\langle f, 1 \rangle_{\mathrm{V}}$. 
	Let us also introduce the spaces (see \cite{MR3518604} for more details)
	\begin{align*}\mathrm{V}_0 &=\mathrm{H}_{(0)}^{1}(\Omega)= \{ v \in \mathrm{V} : \overline{v} = 0 \},\\
	\mathrm{V}_0' &=\mathrm{H}_{(0)}^{-1}(\Omega)= \{ f \in \mathrm{V}' : \overline{f} = 0 \},\end{align*}
	and the operator $\mathcal{A} : \mathrm{V} \rightarrow \mathrm{V}'$ is defined by
	\begin{align*}\,_{\mathrm{V}'}\langle \mathcal{A} u ,v \rangle_{\mathrm{V}} := \int_\Omega \nabla u(x) \cdot \nabla v(x) \d x, \  \text{for all } \ u,v \in \mathrm{V}.\end{align*}
	Clearly $\mathcal{A}$ is linear and it maps $\mathrm{V}$ into $\mathrm{V}_0'$ and its restriction $\mathcal{B}$ to $\mathrm{V}_0$ onto $\mathrm{V}_0'$ is an isomorphism.   
	We know that for every $f \in \mathrm{V}_0'$, $\mathcal{B}^{-1}f$ is the unique solution with zero mean value of the \emph{Neumann problem}:
	$$
	\left\{
	\begin{array}{ll}
	- \Delta u = f, \  \mbox{ in } \ \Omega, \\
	\frac{\partial u}{\partial\mathbf{n}} = 0, \ \mbox{ on } \  \partial \Omega.
	\end{array}
	\right.
	$$
	In addition, we have
	\begin{align} \!_{\mathrm{V}'}\langle \mathcal{A}u , \mathcal{B}^{-1}f \rangle_{\mathrm{V}} &= \!_{\mathrm{V}'}\langle f ,u \rangle_{\mathrm{V}}, \ \text{ for all } \ u\in \mathrm{V},  \ f \in \mathrm{V}_0' , \label{bes}\\
	\!_{\mathrm{V}'}\langle f , \mathcal{B}^{-1}g \rangle_{\mathrm{V}} &= \!_{\mathrm{V}'}\langle g ,\mathcal{B}^{-1}f \rangle_{\mathrm{V}} = \int_\Omega \nabla(\mathcal{B}^{-1}f)\cdot \nabla(\mathcal{B}^{-1}g)\d x, \ \text{for all } \ f,g \in \mathrm{V}_0'.\label{bes1}
	\end{align}
	Note that $\mathcal{B}$ can be also viewed as an unbounded linear operator on $\mathrm{H}$ with
	domain $\D(\mathcal{B}) = \left\{v \in \mathrm{H}^2(\Omega)\cap \mathrm{V}_0 : \frac{\partial v}{\partial\mathbf{n}}= 0\text{ on }\partial\Omega \right\}$. 
	
	Below, we give some facts about the elliptic regularity theory of Laplace operator $ B_N = -\Delta  + \mathrm{I} $ with Neumann boundary conditions. 
	
		\begin{lemma}[$ L^p $ regularity for Neumann Laplacian, \cite{SJL_1961-1962____A6_0}, Theorem 9.26, \cite{MR2759829}] \label{Lp_reg}
			Let us assume that $ u $ satisfies $ B_N u = f$ and $ \frac{\partial u}{\partial n} =0$ in weak sense. Then the following holds:
			\begin{itemize}
				\item [(i)] Let $ f \in (W^{1,p'}(\Omega))'$, with $ 1<p'<\infty $. Then $ u \in W^{1,p}(\Omega) $, where $ \frac{1}{p}+\frac{1}{p'}=1 $ and there exists a constant $ C>0 $ such that 
				\begin{align*}
				\|u\|_{W^{1,p}(\Omega)} \leq C \|f\|_{(W^{1,p'}(\Omega))'}.
				\end{align*}
				\item [(ii)] Let $f \in L^p(\Omega)$, with $1 < p < \infty$. Then, $u \in  W^{2,p} (\Omega), -\Delta u + u = f$ for a.e. $x \in \Omega, \frac{\partial u}{\partial n}= 0$ on $\partial \Omega$ in the sense of traces and there exists $C > 0$ such that
				\begin{align*}
				\|u\|_{W^{2,p}(\Omega)} \leq C \|f\|_{L^p(\Omega)}.
				\end{align*}
			\end{itemize}
		\end{lemma}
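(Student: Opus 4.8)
The plan is to reduce both statements to the classical Agmon-Douglis-Nirenberg (ADN) $L^p$-theory for the elliptic boundary value problem associated with $B_N=-\Delta+\mathrm{I}$ under homogeneous Neumann conditions, exploiting that the $+\mathrm{I}$ term renders $B_N$ invertible (its bilinear form is coercive, with no kernel). I would prove part (ii) first, since part (i) then follows by a soft duality/interpolation argument.

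For part (ii), I first record that the bilinear form $a(u,v)=\int_\Omega(\nabla u\cdot\nabla v+uv)\,\mathrm{d}x$ is bounded and coercive on $\mathrm{H}^1(\Omega)$, so for $f\in L^p(\Omega)\subset(\mathrm{H}^1(\Omega))'$ the Lax-Milgram theorem produces a unique weak solution $u\in\mathrm{H}^1(\Omega)$ of $a(u,v)=\int_\Omega f v$ for all $v\in\mathrm{H}^1(\Omega)$, which is the weak form of $-\Delta u+u=f$ with $\partial u/\partial n=0$. The heart of the matter is the a priori estimate $\|u\|_{W^{2,p}(\Omega)}\le C\big(\|f\|_{L^p(\Omega)}+\|u\|_{L^p(\Omega)}\big)$ for $1<p<\infty$. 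Since $\Omega$ is bounded and smooth, I obtain this by the standard localization scheme: a partition of unity splits $u$ into an interior piece, handled by the Calderón-Zygmund estimate for $-\Delta+\mathrm{I}$ on $\mathbb{R}^n$ (Mikhlin multiplier theorem), and finitely many boundary pieces, handled after flattening the boundary and performing an even reflection compatible with the Neumann condition, which again reduces to a constant-coefficient Calderón-Zygmund estimate. Finally, I remove the lower-order term $\|u\|_{L^p}$ by a compactness-uniqueness argument: if the sharp estimate failed, a normalized sequence $u_k$ with $\|u_k\|_{W^{2,p}}=1$ and $\|f_k\|_{L^p}\to0$ would, by the compact embedding $W^{2,p}(\Omega)\hookrightarrow W^{1,p}(\Omega)$, converge to a nonzero solution of $B_N u=0$ with Neumann data, contradicting $\int_\Omega(|\nabla u|^2+u^2)\,\mathrm{d}x=0$. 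This yields $\|u\|_{W^{2,p}}\le C\|f\|_{L^p}$, and the trace sense of the boundary condition follows from $u\in W^{2,p}(\Omega)$.

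For part (i), I would derive the $W^{1,p}$-estimate for distributional data $f\in(W^{1,p'}(\Omega))'$ from part (ii) by complex interpolation. Part (ii) with exponent $p'$ gives that the solution operator $S=B_N^{-1}$ maps $L^{p'}(\Omega)\to W^{2,p'}(\Omega)$ boundedly; by self-adjointness of $S$ (symmetry of $a$), its Banach adjoint shows that $S:(W^{2,p'}(\Omega))'\to L^p(\Omega)$ is bounded as well. Interpolating these two endpoints at $\theta=1/2$, and using $[L^p,W^{2,p}]_{1/2}=W^{1,p}$ together with the duality relation $[(W^{2,p'})',L^p]_{1/2}=\big([W^{2,p'},L^{p'}]_{1/2}\big)'=(W^{1,p'})'$, yields $S:(W^{1,p'}(\Omega))'\to W^{1,p}(\Omega)$, which is exactly the desired bound. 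I expect the main obstacle to be the boundary a priori estimate in part (ii): producing the global $W^{2,p}$ bound up to $\partial\Omega$ for the Neumann problem, i.e. justifying the flattening-and-reflection reduction and patching the local estimates, is the genuinely technical step, whereas coercivity, the compactness-uniqueness absorption, and the interpolation argument for part (i) are comparatively routine.
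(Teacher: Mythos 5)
The paper offers no proof of this lemma: it is quoted as a classical result, with citations to the Agmon--Douglis--Nirenberg/Lions--Magenes $L^{p}$ theory and to Theorem 9.26 of Brezis's book, so there is no internal argument to compare against. Your outline is precisely the standard proof underlying those references---interior Calder\'on--Zygmund estimates plus boundary flattening/reflection and a compactness--uniqueness absorption of the lower-order term for part (ii), then duality and complex interpolation (using $[L^{q},W^{2,q}]_{1/2}=W^{1,q}$ and Calder\'on's duality theorem) for part (i)---and it is correct at the level of a sketch, with the genuinely technical step (the boundary $W^{2,p}$ estimate) correctly identified. One small slip: the inclusion $L^{p}(\Omega)\subset(\mathrm{H}^{1}(\Omega))'$ you invoke for Lax--Milgram fails when $n\ge 3$ and $1<p<\frac{2n}{n+2}$. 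This is harmless here, since the lemma presupposes that $u$ is a \emph{given} weak solution, so existence need not be established; but if you do want existence for all $1<p<\infty$, obtain it by approximating $f$ in $L^{p}$ by smooth data and passing to the limit with your a priori estimate (or by defining $u$ through duality from the $W^{2,p'}$ theory), rather than by Lax--Milgram.
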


	\subsection{Linear and non-linear operators}
	Let us define the Stokes operator $\A : \D(\A)\cap  \G_{\mathrm{div}} \to \G_{\mathrm{div}}$. In the case of no slip boundary condition 
	$$\A=-\mathrm{P}\Delta,\ \D(\A)=\mathbb{H}^2(\Omega) \cap \V_{\mathrm{div}},$$ where $\mathrm{P} : \mathbb{L}^2(\Omega) \to \G_{\mathrm{div}}$ is the \emph{Helmholtz-Hodge orthogonal projection}. Note also that, we have
	$$\!_{\V_{\mathrm{div}}'}\langle\A\u, \v\rangle_{\V_{\mathrm{div}}} = (\u, \v)_{\V\text{div}} = (\nabla\u, \nabla\v),  \text{ for all } \ \u, \v \in \V_{\mathrm{div}}.$$
	It should also be noted that  $\A^{-1} : \G_{\mathrm{div}} \to \G_{\text{div }}$ is a self-adjoint compact operator on $\G_{\mathrm{div}}$ and by
	the classical \emph{spectral theorem}, there exists a sequence $\lambda_j$ with $0<\lambda_1\leq \lambda_2\leq \lambda_j\leq\cdots\to+\infty$
	and a family of $\mathbf{e}_j \in \D(\A),$ which is orthonormal in $\G_\text{div}$ and such that $\A\mathbf{e}_j =\lambda_j\mathbf{e}_j$. We know that $\u \in\G_{\mathrm{div}}$ can be expressed as $\u=\sum\limits_{j=1}^{\infty}\langle \u,\mathbf{e}_j\rangle \mathbf{e}_j,$ so that $\A\u=\sum\limits_{j=1}^{\infty}\lambda_j\langle \u,\mathbf{e}_j\rangle \mathbf{e}_j,$ for all $\u\in \D(\A)\subset \G_{\mathrm{div}}$. Thus, it is immediate that 
	\begin{align}
	\|\nabla\u\|^2=\langle \A\u,\u\rangle =\sum_{j=1}^{\infty}\lambda_j|\langle \u,\mathbf{e}_j\rangle|^2\geq \lambda_1\sum_{j=1}^{\infty}|\langle \u,\mathbf{e}_j\rangle|^2=\lambda_1\|\u\|^2.
	\end{align}

	For $\u,\v,\w \in \V_{\mathrm{div}}$, we define the trilinear operator $b(\cdot,\cdot,\cdot)$ as
	$$b(\u,\v,\w) = \int_\Omega (\u(x) \cdot \nabla)\v(x) \cdot \w(x)\d x=\sum_{i,j=1}^n\int_{\Omega}u_i(x)\frac{\partial v_j(x)}{\partial x_i}w_j(x)\d x,$$
	and the bilinear operator $\B$ from $\V_{\mathrm{div}} \times \V_{\mathrm{div}} $ into $\V_{\mathrm{div}}'$ is defined by,
	$$ \!_{\V_{\mathrm{div}}'}\langle \B(\u,\v),\w  \rangle_{\V_{\mathrm{div}}} := b(\u,\v,\w), \  \text{ for all } \ \u,\v,\w \in \V_\text{{div}}.$$
	An integration by parts yields, 
	\begin{equation}\label{2.7}
	\left\{
	\begin{aligned}
	b(\u,\v,\v) &= 0, \ \text{ for all } \ \u,\v \in\V_\text{{div}},\\
	b(\u,\v,\w) &=  -b(\u,\w,\v), \ \text{ for all } \ \u,\v,\w\in \V_\text{{div}}.
	\end{aligned}
	\right.\end{equation}
	For more details about the linear and non-linear operators, we refer the readers to \cite{MR0609732}. Let us now provide some important inequalities, which are used frequently in the paper. 
	\begin{lemma}[Gagliardo-Nirenberg inequality, Theorem 2.1, \cite{MR1230384}] \label{gn}
		Let $\Omega\subset\R^n$ and $\u\in\mathrm{W}^{1,p}_0(\Omega;\R^n)$, $p\geq 1$. Then for any fixed number $p,q\geq 1$, there exists a constant $C>0$ depending only on $n,p,q$ such that 
		\begin{align}\label{gn0}
		\|\u\|_{\mathbb{L}^r}\leq C\|\nabla\u\|_{\mathbb{L}^p}^{\theta}\|\u\|_{\mathbb{L}^q}^{1-\theta},\;\theta\in[0,1],
		\end{align}
		where the numbers $p, q, r, n$ and $\theta$ satisfy the relation
		$$\theta=\left(\frac{1}{q}-\frac{1}{r}\right)\left(\frac{1}{n}-\frac{1}{p}+\frac{1}{q}\right)^{-1}.$$
	\end{lemma}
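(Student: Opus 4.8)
The plan is to treat this as the classical Gagliardo--Nirenberg inequality and to follow the standard two-stage route: first prove a critical Sobolev estimate by the Gagliardo--Loomis--Whitney argument, and then interpolate with the $\mathrm{L}^q$ norm. I would begin by using the hypothesis $\u\in\mathrm{W}^{1,p}_0(\Omega;\R^n)$ to extend $\u$ by zero to all of $\R^n$ without changing any of the norms appearing in \eqref{gn0}, which reduces everything to the whole space; by density it then suffices to argue for $\u\in\mathrm{C}^\infty_c(\R^n;\R^n)$. Passing to the scalar function $\abs{\u}$, which satisfies $\abs{\nabla\abs{\u}}\leq\abs{\nabla\u}$ almost everywhere, reduces the claim to a scalar-valued $u$. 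Before estimating, I would record that the exact exponent relation for $\theta$ is dictated by scaling: applying the sought inequality to $u_\lambda(x):=u(\lambda x)$ and demanding that the constant be independent of $\lambda>0$ forces precisely the stated identity, and in particular pins down $\theta$ and explains the constraint $\theta\in[0,1]$.

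Next I would establish the endpoint estimate. The key step is the Gagliardo--Loomis--Whitney bound
\begin{align*}
\|u\|_{\mathrm{L}^{n/(n-1)}(\R^n)}\leq\prod_{i=1}^{n}\left(\int_{\R^n}\abs{\partial_i u}\,\d x\right)^{1/n},
\end{align*}
which I would prove by writing $\abs{u(x)}\leq\int_{\R}\abs{\partial_i u}\,\d t$ in each coordinate direction, taking the product of the $n$ resulting bounds after raising them to the power $1/(n-1)$, and then integrating one variable at a time via the generalized H\"older inequality. This settles the case $p=1$, $r=n/(n-1)$. To reach a general exponent $1\leq p<n$, I would apply this bound to $v=\abs{u}^{\gamma}$ for a suitable $\gamma>1$: since $\abs{\nabla v}=\gamma\abs{u}^{\gamma-1}\abs{\nabla u}$, H\"older's inequality gives $\int\abs{\partial_i v}\leq\gamma\,\bigl\|\abs{u}^{\gamma-1}\bigr\|_{\mathrm{L}^{p'}}\|\partial_i u\|_{\mathrm{L}^p}$ with $\tfrac1p+\tfrac1{p'}=1$, and choosing $\gamma$ so that the exponents balance yields the critical Sobolev inequality $\|u\|_{\mathrm{L}^{p^*}}\leq C\|\nabla u\|_{\mathrm{L}^p}$, where $p^*=np/(n-p)$.

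Finally I would interpolate to obtain \eqref{gn0}. The elementary H\"older interpolation $\|u\|_{\mathrm{L}^r}\leq\|u\|_{\mathrm{L}^{p^*}}^{\theta}\|u\|_{\mathrm{L}^q}^{1-\theta}$, valid whenever $\tfrac1r=\tfrac{\theta}{p^*}+\tfrac{1-\theta}{q}$, combined with the Sobolev bound of the previous step, gives $\|u\|_{\mathrm{L}^r}\leq C\|\nabla u\|_{\mathrm{L}^p}^{\theta}\|u\|_{\mathrm{L}^q}^{1-\theta}$; substituting $p^*=np/(n-p)$ into the exponent condition reproduces exactly the stated formula for $\theta$. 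The borderline and supercritical regimes $p=n$ and $p>n$ fit the same scheme, with $p^*$ replaced by an arbitrarily large exponent or by the Morrey (continuous) embedding respectively, and in any case could simply be quoted from \cite{MR1230384}. I expect the hard part to be the bookkeeping in the endpoint step: correctly selecting $\gamma$, verifying the iterated H\"older integrations in the Loomis--Whitney argument, and tracking the admissible ranges of $p,q,r$ so that $\theta\in[0,1]$ throughout.
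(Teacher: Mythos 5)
This lemma is stated in the paper as quoted background --- it is attributed to Theorem 2.1 of \cite{MR1230384} and no proof is given --- so there is no internal argument to measure your proposal against; the only question is whether your self-contained proof is sound. It essentially is, and it is the classical route: extension by zero from $\mathrm{W}^{1,p}_0(\Omega)$ to $\mathbb{R}^n$ (which is exactly what makes the constant independent of $\Omega$), reduction to scalar $\mathrm{C}^\infty_c$ functions via $|\nabla|u||\leq|\nabla u|$, the Gagliardo--Loomis--Whitney slicing bound $\|u\|_{\mathrm{L}^{n/(n-1)}}\leq\prod_{i=1}^n\|\partial_iu\|_{\mathrm{L}^1}^{1/n}$, the substitution $v=|u|^\gamma$ with H\"older to reach $\|u\|_{\mathrm{L}^{p^*}}\leq C\|\nabla u\|_{\mathrm{L}^p}$ for $1\leq p<n$, and finally H\"older interpolation between $\mathrm{L}^q$ and $\mathrm{L}^{p^*}$, whose exponent bookkeeping does reproduce the stated formula for $\theta$. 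Two caveats keep this a (perfectly reasonable) sketch rather than a complete proof. First, the argument as written covers only the subcritical range $p<n$ with $q\leq r\leq p^*$; the regimes $p=n$ and $p>n$, which the loose wording of the lemma formally admits, are deferred to the reference. That is acceptable here, since every instance actually invoked in the paper (Ladyzhenskaya's inequality, the embedding $\mathbb{V}_{\mathrm{div}}\hookrightarrow\mathbb{L}^6$ in 3D, the $\mathbb{L}^4$ gradient estimates) is subcritical, but a full proof would need the Morrey/borderline analysis, and one should note that some formally admissible endpoint choices (e.g.\ $r=\infty$ when $p=n$) are genuinely false. Second, two small points of rigor: the scaling argument fixes the exponent relation but does not by itself force $\theta\in[0,1]$ (that restriction is an assumption that makes the interpolation step possible, not a consequence of scaling); and in the density step, when $q>p^*$ the $\mathrm{L}^q$ norm is not controlled by $\mathrm{W}^{1,p}$-convergence, so it is cleaner to mollify --- use $\|u\ast\rho_\epsilon\|_{\mathrm{L}^q}\leq\|u\|_{\mathrm{L}^q}$ and $\|\nabla(u\ast\rho_\epsilon)\|_{\mathrm{L}^p}\leq\|\nabla u\|_{\mathrm{L}^p}$, then Fatou's lemma on the left-hand side along an a.e.\ convergent subsequence. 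Neither caveat is a structural flaw in your approach.
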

	A particular case of Lemma \ref{gn} is the well known inequality due to Ladyzhenskaya (see Lemma 1 and 2, Chapter 1, \cite{MR0254401}), which is given below:
	\begin{lemma}[Ladyzhenskaya's inequality]\label{lady}
		For $\u\in\C_0^{\infty}(\Omega;\R^n), n = 2, 3$, there exists a constant $C$ such that
		\begin{align}\label{lady1}
		\|\u\|_{\mathbb{L}^4}\leq C^{1/4}\|\u\|^{1-\frac{n}{4}}\|\nabla\u\|^{\frac{n}{4}},\text{ for } n=2,3,
		\end{align}
		where $C=2,4,$ for $n=2,3$ respectively. 
	\end{lemma}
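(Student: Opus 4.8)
The plan is to establish \eqref{lady1} by the classical slicing argument of Ladyzhenskaya, since that is what pins down the explicit constants $C=2$ and $C=4$. I note first, however, that the mere \emph{form} of the estimate is already contained in Lemma \ref{gn}: taking $r=4$ and $p=q=2$ in \eqref{gn0} gives $\theta=\left(\tfrac12-\tfrac14\right)\left(\tfrac1n\right)^{-1}=\tfrac n4$, exactly the exponent in \eqref{lady1}. Because Gagliardo--Nirenberg supplies only a non-explicit constant, I would instead run the elementary argument. Throughout it suffices to treat $\u\in\C_0^\infty(\Omega;\R^n)$ (the stated hypothesis), extended by zero to all of $\R^n$, and I set $\rho:=|\u|^2$, which is smooth, nonnegative and compactly supported.

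For $n=2$, the key one-dimensional observation is that, since $\rho$ vanishes at infinity and $\partial_i\rho=2\sum_j u_j\partial_i u_j$, Cauchy--Schwarz gives $\rho(x)\le\int_{\R}|\partial_i\rho|\,\d x_i\le 2\int_{\R}|\u|\,|\partial_i\u|\,\d x_i$ for $i=1,2$; call the two right-hand sides $2g_1(x_2)$ and $2g_2(x_1)$. Multiplying the two bounds, integrating over $\R^2$ and using Fubini factorises the integral, $\|\u\|_{\mathbb{L}^4}^4=\int_{\R^2}\rho^2\,\d x\le 4\big(\int_{\R}g_1\,\d x_2\big)\big(\int_{\R}g_2\,\d x_1\big)$. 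Each factor is controlled by Cauchy--Schwarz, $\int_{\R}g_1\,\d x_2=\int_{\R^2}|\u|\,|\partial_1\u|\,\d x\le\|\u\|\,\|\partial_1\u\|$ and likewise for $g_2$, and a final application of Young's inequality $\|\partial_1\u\|\,\|\partial_2\u\|\le\tfrac12\|\nabla\u\|^2$ yields $\|\u\|_{\mathbb{L}^4}^4\le 2\|\u\|^2\|\nabla\u\|^2$, i.e.\ \eqref{lady1} with $C=2$.

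For $n=3$ I would reduce to the planar case by slicing. For each fixed $x_3$, the two-dimensional estimate just proved, applied to the (vector) field $\u(\cdot,\cdot,x_3)$ on $\R^2$, gives $\int_{\R^2}|\u|^4\,\d x_1\d x_2\le 2\,\phi(x_3)\,\psi(x_3)$, where $\phi(x_3):=\int_{\R^2}|\u|^2\,\d x_1\d x_2$ and $\psi(x_3):=\int_{\R^2}|\nabla_{(x_1,x_2)}\u|^2\,\d x_1\d x_2$. Integrating in $x_3$ and pulling out the supremum of $\phi$ gives $\|\u\|_{\mathbb{L}^4}^4\le 2\big(\sup_{x_3}\phi\big)\|\nabla_{(x_1,x_2)}\u\|^2$. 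The supremum is handled by the fundamental theorem of calculus in the $x_3$-direction, $\phi(x_3)\le 2\int_{\R^3}|\u|\,|\partial_3\u|\,\d x\le 2\|\u\|\,\|\partial_3\u\|$, and then the crude bounds $\|\partial_3\u\|\le\|\nabla\u\|$ and $\|\nabla_{(x_1,x_2)}\u\|\le\|\nabla\u\|$ combine to give $\|\u\|_{\mathbb{L}^4}^4\le 4\|\u\|\,\|\nabla\u\|^3$, which is \eqref{lady1} with $C=4$.

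The genuinely delicate points are all bookkeeping. First, one must differentiate $\rho=|\u|^2$ rather than $|\u|$, since the latter is not smooth at the zeros of $\u$; the inequality $|\partial_i\rho|\le 2|\u|\,|\partial_i\u|$ is precisely what makes the vector-valued case behave like the scalar one, and it shows the planar step is valid for fields with any number of components. Second, the correct scaling in three dimensions forces all three coordinate directions to enter, which is exactly why the planar estimate cannot be used with only two directions but must be coupled to a genuine $x_3$-integration. I expect the constant-tracking in this three-dimensional reduction to be the main thing to get right: keeping the numerical constant equal to $4$ hinges on combining the sharp planar constant $2$ with the lossless estimates $\|\partial_3\u\|\le\|\nabla\u\|$ and $\|\nabla_{(x_1,x_2)}\u\|\le\|\nabla\u\|$.
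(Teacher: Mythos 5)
Your proof is correct, and the constant-tracking works out exactly as claimed: the 2D slicing argument gives $\|\u\|_{\mathbb{L}^4}^4\le 4\|\u\|^2\|\partial_1\u\|\,\|\partial_2\u\|\le 2\|\u\|^2\|\nabla\u\|^2$, and the reduction of the 3D case to planar slices plus the fundamental theorem of calculus in $x_3$ gives $\|\u\|_{\mathbb{L}^4}^4\le 4\|\u\|\,\|\nabla\u\|^3$, matching $C=2$ and $C=4$ in \eqref{lady1}. Note that the paper itself does not prove this lemma: it presents it as a special case of the Gagliardo--Nirenberg inequality (Lemma \ref{gn}) and cites Lemma 1 and 2, Chapter 1 of Ladyzhenskaya's monograph for the explicit constants. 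Your argument is precisely the classical proof from that cited source, so in substance you have reconstructed the reference's proof rather than taken a different route; your opening observation that Gagliardo--Nirenberg pins down the exponent $\theta=n/4$ but not the numerical constant is exactly the reason the paper's in-text derivation alone would not suffice, and is correctly handled. One cosmetic remark: in the 2D step the bound $\rho(x)\le\int_{\R}|\partial_i\rho|\,\d x_i$ is the fundamental theorem of calculus (using compact support), while Cauchy--Schwarz enters only in $|\partial_i\rho|\le 2|\u|\,|\partial_i\u|$; your wording slightly conflates the two, but the mathematics is right.
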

	Note that the above inequality is true even in unbounded domains. For $n=3$, $r=6$, $p=q=2$, from \eqref{gn0}, we find $\theta=1$ and 
	\begin{align*}
	\|\u\|_{\mathbb{L}^6}\leq C\|\nabla\u\|=C\|\u\|_{\V_{\mathrm{div}}}.
	\end{align*}
	For $n=2$,  the following estimate holds: 
	\begin{align*}
	|b(\u,\v,\w)| \leq \sqrt{2}\|\u\|^{1/2}\| \nabla \u\|^{1/2}\|\v\|^{1/2}\| \nabla \v\|^{1/2}\| \nabla \w\|,
	\end{align*}
	for every $\u,\v,\w \in \V_{\mathrm{div}}$. Hence, for all $\u\in\V_{\mathrm{div}},$ we have
	\begin{align}
	\label{be}
	\|\B(\u,\u)\|_{\V_{\mathrm{div}}'}\leq \sqrt{2}\|\u\|\|\nabla\u\|\leq \sqrt{\frac{2}{\lambda_1}}\|\u\|_{\V_{\mathrm{div}}}^2 ,
	\end{align}
	by using the Poincar\'e inequality. Similarly, for $n=3$, we have 
	\begin{align*}
	|b(\u,\v,\w)| \leq 2\|\u\|^{1/4}\| \nabla \u\|^{3/4}\|\v\|^{1/4}\| \nabla \v\|^{3/4}\| \nabla \w\|,
	\end{align*}
	for every $\u,\v,\w \in \V_{\mathrm{div}}$. Hence, for all $\u\in\V_{\mathrm{div}},$ using the Poincar\'e inequality, we have
	\begin{align}
	\|\B(\u,\u)\|_{\V_{\mathrm{div}}'}\leq 2\|\u\|^{1/2}\|\nabla\u\|^{3/2}\leq \frac{2}{\lambda_1^{1/4}}\|\u\|_{\V_{\mathrm{div}}}^2.
	\end{align}

	We also need the following general version of the Gagliardo-Nirenberg interpolation inequality to prove the regularity results. For functions $\u: \Omega\to\R^n$ defined on a bounded Lipschitz domain $\Omega\subset\R^n$, the Gagliardo-Nirenberg interpolation inequality is given by: 
	\begin{lemma}[Gagliardo-Nirenberg interpolation inequality, Theorem on Page125 , \cite{MR109940}]\label{GNI} Let $\Omega\subset\R^n$, $\u\in\mathrm{W}^{m,p}(\Omega;\R^n), p\geq 1$ and fix $1 \leq p,q \leq \infty$ and a natural number $m$. Suppose also that a real number $\theta$ and a natural number $j$ are such that
		\begin{align}
		\label{theta}
		\theta=\left(\frac{j}{n}+\frac{1}{q}-\frac{1}{r}\right)\left(\frac{m}{n}-\frac{1}{p}+\frac{1}{q}\right)^{-1}\end{align}
		and
		$\frac{j}{m} \leq \theta \leq 1.$ Then for any $\u\in\mathrm{W}^{m,p}(\Omega;\R^n),$ we have 
		\begin{align}\label{gn1}
		\|\nabla^j\u\|_{\mathbb{L}^r}\leq C\left(\|\nabla^m\u\|_{\mathbb{L}^p}^{\theta}\|\u\|_{\mathbb{L}^q}^{1-\theta}+\|\u\|_{\mathbb{L}^s}\right),
		\end{align}
		where $s > 0$ is arbitrary and the constant $C$ depends upon the domain $\Omega,m,n$. 
	\end{lemma}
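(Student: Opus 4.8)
The plan is to treat this as the classical Gagliardo--Nirenberg interpolation inequality and reconstruct its proof in three stages: first establish a homogeneous version on the whole space $\R^n$ for smooth, compactly supported functions, then pin down the exponent relation \eqref{theta} by a scaling argument, and finally transfer to the bounded Lipschitz domain $\Omega$ through an extension operator, which is precisely what forces the additive lower-order term $\|\u\|_{\mathbb{L}^s}$ to appear.

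First I would reduce to the whole space. By density it suffices to argue for $\u\in\C_0^{\infty}(\R^n;\R^n)$, and the foundational brick is the Gagliardo--Nirenberg--Sobolev estimate $\|\u\|_{\mathbb{L}^{n/(n-1)}}\leq C\|\nabla\u\|_{\mathbb{L}^1}$, obtained by writing each component as the integral of a partial derivative along each coordinate axis and applying the generalized (Loomis--Whitney type) Hölder inequality. Replacing $\u$ by a power $|\u|^{\gamma}$ and using Hölder then upgrades this to the family of first-order estimates $\|\u\|_{\mathbb{L}^r}\leq C\|\nabla\u\|_{\mathbb{L}^p}^{\theta}\|\u\|_{\mathbb{L}^q}^{1-\theta}$.

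The exponent relation \eqref{theta} is not something to verify by hand but is forced by dimensional analysis: applying the candidate homogeneous inequality to the rescaled function $\u_\lambda(x)=\u(\lambda x)$ and using the identity $\|\nabla^j\u_\lambda\|_{\mathbb{L}^r}=\lambda^{\,j-n/r}\|\nabla^j\u\|_{\mathbb{L}^r}$ makes the powers of $\lambda$ on the two sides agree only when $j-\tfrac{n}{r}=\theta\left(m-\tfrac{n}{p}\right)-(1-\theta)\tfrac{n}{q}$, which is exactly \eqref{theta}; the side condition $\tfrac{j}{m}\leq\theta\leq 1$ records the range in which the interpolation closes. To reach general $j$ and $m$ I would iterate the first-order estimate, controlling an intermediate derivative $\nabla^j\u$ by $\nabla^{j+1}\u$ and $\nabla^{j-1}\u$ and chaining the resulting inequalities with Young's inequality; this induction on the order of differentiation, with the attendant bookkeeping of interpolation exponents, is the technically heaviest part.

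Finally, to pass from $\R^n$ to the bounded Lipschitz domain $\Omega$, I would invoke an extension operator $E:\mathrm{W}^{m,p}(\Omega)\to\mathrm{W}^{m,p}(\R^n)$ bounded on all relevant spaces, apply the whole-space inequality to $E\u$, and restrict back. On a bounded domain the purely homogeneous inequality cannot hold (it already fails on constants), and the full $\mathrm{W}^{m,p}(\R^n)$-norm of the extension cannot be dominated by $\|\nabla^m\u\|_{\mathbb{L}^p}$ alone; this is exactly why the additive term $\|\u\|_{\mathbb{L}^s}$ with arbitrary $s>0$ is needed, to absorb the lower-order contribution of the extension. The main obstacle I anticipate is the derivative-order induction together with securing the estimate uniformly across the whole admissible exponent range rather than in one special case. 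Since the statement is a standard result, in the paper it is legitimately invoked by citing \cite{MR109940} rather than reproved.
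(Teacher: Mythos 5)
The paper does not prove this lemma at all: it is imported verbatim from Nirenberg's 1959 paper (\cite{MR109940}) with only a citation, so there is no internal proof to compare your attempt against, and your own closing remark that citation is the legitimate route here is exactly what the authors do. Your reconstruction is the standard classical argument — the first-order Gagliardo--Nirenberg--Sobolev estimate via the Loomis--Whitney/generalized H\"older inequality, upgrading through powers $|\u|^{\gamma}$, a scaling computation that forces the exponent balance \eqref{theta}, induction on the order of differentiation for general $j$ and $m$, and an extension operator to pass to the bounded domain with the additive term $\|\u\|_{\mathbb{L}^s}$ absorbing the lower-order contribution — and it is sound as an outline. One small imprecision worth fixing: the claim that the homogeneous inequality on a bounded domain ``already fails on constants'' is literally true only for $j=0$; for $j\geq 1$ with $j<m$ the counterexamples are polynomials of degree $j$, whose $j$-th derivatives are nonzero constants while $\nabla^m\u\equiv 0$, so the same conclusion holds but with these in place of constants.
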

	If $1 < p < \infty$ and $m - j -\frac{n}{p}$ is a non-negative integer, then it is necessary to assume also that $\theta\neq 1$. Note that  for $\u\in\mathrm{W}_0^{1,p}(\Omega;\R^n)$, Lemma \ref{gn} is a special case of the above inequality, since for $j=0$, $m=1$ and $\frac{1}{s}=\frac{\theta}{p}+\frac{1-\theta}{q}$ in \eqref{gn1}, and application of the Poincar\'e inequality yields (\ref{gn0}). It should also be noted that \eqref{gn1} can also be written as 
	\begin{align}\label{gn2}
	\|\nabla^j\u\|_{\mathbb{L}^r}\leq C\|\u\|_{\mathbb{W}^{m,p}}^{\theta}\|\u\|_{\mathbb{L}^q}^{1-\theta},
	\end{align}
	By taking $j=1$, $r=4$, $n=m=p=q=s=2$ in (\ref{theta}), we get $\theta=\frac{3}{4},$ and from \eqref{gn1} we get 
	\begin{align}\label{gu}
	\|\nabla\u\|_{\mathbb{L}^4}\leq C\left(\|\Delta\u\|^{3/4}\|\u\|^{1/4}+\|\u\|\right).
	\end{align} 
	Also, taking $j=1$, $r=4$, $n=3$, $m=p=q=s=2$ in (\ref{theta}), we get $\theta=\frac{7}{8},$ and 
	\begin{align}\label{gua}
	\|\nabla\u\|_{\mathbb{L}^4}\leq C\left(\|\Delta\u\|^{7/8}\|\u\|^{1/8}+\|\u\|\right).
	\end{align} 
	
	\begin{lemma}
		[Poincar\'e-Wirtinger inequality, Corollary 12.28, \cite{MR3726909}]\label{poin}
		Assume that $1 \leq p <  \infty$ and that $\Omega$ is a bounded connected open subset of the $n$-dimensional Euclidean space $\mathbb{R}^n$ whose boundary is of class $\mathrm{C}$. Then there exists a constant $C_{\Omega,p}>0$, such that for every function $\phi \in \mathrm{W}^{1,p}(\Omega)$,
		$$\|\phi-\overline{\phi}\|_{\mathrm{L}^{p}(\Omega )}\leq C_{\Omega,p}\|\nabla \phi\|_{\mathrm{L}^{p}(\Omega )},$$
		where
		$\overline{\phi}={\frac {1}{|\Omega |}}\int _{\Omega }\phi(y)\,\mathrm {d} y$
		is the average value of $\phi$ over $\Omega$.
	\end{lemma}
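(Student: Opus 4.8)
The plan is to argue by contradiction, exploiting the compactness of the embedding $W^{1,p}(\Omega)\hookrightarrow L^p(\Omega)$. Suppose the asserted inequality fails for every choice of constant. Then for each $k\in\mathbb{N}$ there exists $\phi_k\in W^{1,p}(\Omega)$ with
$$\|\phi_k-\overline{\phi_k}\|_{L^p(\Omega)}> k\,\|\nabla\phi_k\|_{L^p(\Omega)}.$$
Normalizing, I would set $\psi_k:=(\phi_k-\overline{\phi_k})/\|\phi_k-\overline{\phi_k}\|_{L^p(\Omega)}$, so that each $\psi_k$ satisfies $\overline{\psi_k}=0$, $\|\psi_k\|_{L^p(\Omega)}=1$, and $\|\nabla\psi_k\|_{L^p(\Omega)}<1/k$. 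In particular, $\{\psi_k\}$ is bounded in $W^{1,p}(\Omega)$ and $\nabla\psi_k\to 0$ in $L^p(\Omega;\mathbb{R}^n)$.

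First I would invoke the Rellich--Kondrachov theorem: since $\Omega$ is bounded, connected, and has boundary of class $\mathrm{C}$, the embedding $W^{1,p}(\Omega)\hookrightarrow L^p(\Omega)$ is compact. Thus, passing to a subsequence (not relabeled), there is $\psi\in L^p(\Omega)$ with $\psi_k\to\psi$ strongly in $L^p(\Omega)$. Strong convergence preserves both normalizations, giving $\|\psi\|_{L^p(\Omega)}=1$ and $\overline{\psi}=0$.

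Next I would identify the weak gradient of the limit. For any $\varphi\in C_c^\infty(\Omega)$ and each index $i$, integration by parts yields
$$\int_\Omega \psi\,\partial_i\varphi\,\mathrm{d}x=\lim_{k\to\infty}\int_\Omega \psi_k\,\partial_i\varphi\,\mathrm{d}x=-\lim_{k\to\infty}\int_\Omega (\partial_i\psi_k)\,\varphi\,\mathrm{d}x=0,$$
where the last equality uses $\partial_i\psi_k\to 0$ in $L^p(\Omega)$ together with H\"older's inequality. Hence $\psi\in W^{1,p}(\Omega)$ with $\nabla\psi=0$ a.e., and since $\Omega$ is connected this forces $\psi$ to be a.e.\ equal to a constant. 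The mean-zero condition $\overline{\psi}=0$ then forces that constant to vanish, so $\psi=0$ a.e., contradicting $\|\psi\|_{L^p(\Omega)}=1$. This contradiction establishes the inequality.

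The main obstacle is the compactness step: the conclusion genuinely relies on the boundary regularity hypothesis (boundary of class $\mathrm{C}$), since without it the compact embedding $W^{1,p}(\Omega)\hookrightarrow L^p(\Omega)$ can fail and the inequality itself may break down. The remaining ingredients — the normalization and the identification of the weak limit — are routine. I would also note that the contradiction scheme automatically produces a single constant $C_{\Omega,p}$ valid for all $\phi\in W^{1,p}(\Omega)$ and depending only on $\Omega$ and $p$, exactly as claimed. Since the statement is quoted verbatim as Corollary 12.28 of \cite{MR3726909}, one may alternatively just cite it; the sketch above merely records the self-contained argument.
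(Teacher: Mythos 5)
The paper offers no proof of this lemma at all: it is imported verbatim, with citation, from Corollary 12.28 of \cite{MR3726909} and is used purely as a black box (e.g.\ to get $\|f\|\leq C_\Omega\|\nabla f\|$ on $\mathrm{H}^1_{(0)}(\Omega)$). Your compactness-contradiction argument is the standard proof of the Poincar\'e--Wirtinger inequality, and it is correct in all its steps: the normalization is legitimate (the strict inequality forces $\|\phi_k-\overline{\phi_k}\|_{\mathrm{L}^p}>0$), strong $\mathrm{L}^p$ convergence preserves both the unit norm and the zero mean, the weak gradient of the limit is identified correctly via integration against test functions, and connectedness plus the mean-zero condition force the limit to vanish, giving the contradiction; the scheme also automatically yields one constant $C_{\Omega,p}$ for all $\phi$. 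The only ingredient carrying real weight is the compact embedding $\mathrm{W}^{1,p}(\Omega)\hookrightarrow \mathrm{L}^p(\Omega)$ for a bounded domain whose boundary is merely of class $\mathrm{C}$ (continuous): most textbook versions of Rellich--Kondrachov assume a Lipschitz boundary, the cone property, or an extension property, and the validity for class-$\mathrm{C}$ domains is precisely the nontrivial content of the part of \cite{MR3726909} from which the quoted corollary is drawn. You flag this dependence explicitly, which is the right thing to do, but a fully self-contained proof at the stated generality would have to cite (or prove) that specific compactness theorem rather than a generic Rellich--Kondrachov statement. With that caveat, your proposal is a valid proof of a statement the paper itself only quotes.
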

	
	\subsection{Basic assumptions}
	Let us now make the following assumptions on $\mathrm{J}$ and $\mathrm{F}$ in order to establish the solvability results of the system \eqref{steadysys}. We suppose that the potential $\F$ can be written in the following form $$\F= \F_1 + \F_2$$ where $\F_1 \in \C^{(2+2q)}(-1,1)$ with $q \in \mathbb{N}$ fixed, and $\F_2 \in \C^2(-1,1)$.
	
	Now, we list the assumptions on $\nu, \J, \F_1, \F_2 $ and mobility $m$ (cf. \cite{MR3019479}).
	\begin{itemize}
		\item[(A1)] $ \J \in \W^{1,1}(\mathbb{R}^2;\R), \  \J(x)= \J(-x) \; \text {and} \ a(x) = \int_\Omega \J(x-y)\d y \geq 0,$ a.e., in $\Omega$. 
		\item[(A2)] The function $\nu$ is locally Lipschitz on $\mathbb{R}$ and there exists $\nu_1, \nu_2 >0$ such that 
		$$ \nu_1 \leq \nu(s) \leq \nu_2, \ \text{ for all }\  s \in \mathbb{R}.$$
		\item[(A3)] There exist $C_1>0$ and $\epsilon_0 >0$ such that 
		$$\F_1^{(2+2q)} (s) \geq C_1, \ \text{ for all } \ s \in (-1,1+\epsilon_0] \cup [1-\epsilon_0,1).$$
		
		\item[(A4)] There exists $\epsilon_0 >0$ such that, for each $k=0,1,...,2+2q$ and each $j=0,1,...,q$, 
		$$ \F_1^{(k)}(s) >0, \  \text{ for all }\ s \in [1-\epsilon_0,1)$$ 
		$$ \F_1^{(2j+2)}(s) \geq 0, \ \F_1^{(2j+1)}(s) \leq 0, \ \text{ for all }\ s \in (-1,1+\epsilon_0].$$
		\item[(A5)] There exists $\epsilon_0 >0$ such that $\F_1^{(2+2q)}$ is non-decreasing in $[1-\epsilon_0,1)$ and non-increasing in $(-1,-1+\epsilon_0]$.
		\item[(A6)] There exists $\alpha , \beta \in \mathbb{R}$ with $\alpha + \beta > - \min\limits_{[-1,1]} \F''_2$ such that 
		$$ \F_1''(s) \geq \alpha, \  \text{ for all }\ s \in (-1,1), \quad a(x) \geq \beta, \ \text{ a.e. }\ x \in \Omega.$$ 
		\item[(A7)] $\lim\limits_{s \rightarrow -1} \F_1'(s) = -\infty$ and $\lim\limits_{s \rightarrow 1} \mathrm{F}_1'(s) = \infty$.
		\item[(A8)] The function $m$ is locally Lipschitz function on $\mathbb{R}$ and there exists $m_1 , m_2 >0$ such that
		\begin{align*}
		m_1 \leq m(s) \leq m_2 , \ \text{ for all }\ s \in \mathbb{R}.
		\end{align*}
	\end{itemize}

	\begin{remark} \cite{MR3000606}
		We can represent the potential $\mathrm{F}$ as a quadratic perturbation of a convex function. That is  
		\begin{align}\label{decomp of F}
		\mathrm{F}(s)= \mathrm{G} (s) - \frac{\kappa}{2} s^2,
		\end{align}
		where $\mathrm{G}$ is strictly convex and $\kappa>0$.
	\end{remark}
	We further assume that   
	\begin{itemize}
		\item[(A9)] there exists $C_0 > 0$ such that  $\F''(s) + a(x) \geq C_0$ , for all $s \in (-1,1)$ a.e. $x \in  \Omega$ and 
		$  \|\J \|_{\L^1} \leq {C_0}  + \kappa$.
	\end{itemize}
	
	\begin{remark}\label{remark J}
		Assumption $\J \in \W^{1,1}(\mathbb{R}^2;\R)$ can be weakened. Indeed, it can be replaced by $\J \in \W^{1,1}(\B_\delta;\R)$, where $\B_\delta := \{z \in \mathbb{R}^2 : |z| < \delta \}$ with $\delta := \emph{diam}(\Omega)=\sup\limits_{x,y\in \Omega}d(x,y)$, where $d(\cdot,\cdot)$ is the Euclidean metric on $\mathbb{R}^2$, or also by
		\begin{eqnarray} \label{Estimate J}
		\sup_{x\in \Omega} \int_\Omega \left( |\J(x-y)| + |\nabla \J(x-y)| \right) \d y < +\infty.
		\end{eqnarray}
	\end{remark}
	Note that \eqref{Estimate J} says that $\sup\limits_{x \in \Omega} \|\J\|_{\W^{1,1}} $ is finite and hence assumption (A1) is justified.
	
	\begin{remark}\label{remark F}
		Assumptions (A3)-(A6) are satisfied in the case of the physically relevant logarithmic double-well potential \eqref{2}, for any fixed positive integer $q$. In particular setting 
		\begin{align*}
		\F_1 (s)= \frac{\theta}{2}((1+s)\ln(1+s)+(1-s)\ln(1-s)), \qquad \F_2(s) =- \frac{\theta_cs^2}{2}
		\end{align*}
		then Assumption (A6) is satisfied iff $\beta > \theta_c - \theta.$   
	\end{remark}
		
	\section{Existence of Weak Solution}\label{sec3}\setcounter{equation}{0}
	In this section, we establish the existence of a weak solution to the system \eqref{steadysys} using pseudo-monotonicity arguments and  Browder's theorem. Let us first give the definition of \emph{weak solution} of the system \eqref{steadysys}.
	\begin{definition} \label{weakdef}
		Let $\mathbf{h} \in \V'_{{\mathrm{div}}}$ and $k\in (-1,1)$ be fixed.	A triple $(\mathbf{u},\mu , \varphi) \in \V_{{\mathrm{div}}} \times \mathrm{V} \times (\mathrm{V} \cap \L^2_{(k)}(\Omega) ) $ is called a \emph{weak solution} of the problem \eqref{steadysys} if
		\begin{align} 
		\label{weakphi}    \int_\Omega (\mathbf{u} \cdot \nabla \varphi)  \psi \, \d x =& -\int_\Omega m(\varphi) \nabla \mu \cdot \nabla \psi \, \d x, \\
		\label{weakmu}  \int_\Omega \mu \psi \, \d x=& \int_\Omega (a \varphi -\J * \varphi) \psi \, \d x +\int_\Omega \F'(\varphi) \psi \, \d x, \\
		\label{weakform nse}
		\int_\Omega (\mathbf{u} \cdot \nabla) \mathbf{u} \cdot \mathbf{v} \, \d x+ \int_\Omega 2\nu (\varphi)  \mathrm{D}\mathbf{u} \cdot  \mathrm{D}\mathbf{v}\, \d x =& \int_\Omega\mu \nabla \varphi \cdot \mathbf{v} \, \d x + \langle \mathbf{h} , \mathbf{v} \rangle,
		\end{align}
		for every $\psi \in \mathrm{V}$ and $\v \in \V_{{\mathrm{div}}}$.
	\end{definition}
	Our aim is to establish the existence of a weak solution of the system \eqref{steadysys} in the sense of Definition \ref{weakdef}. But, when working with the above definition, some difficulties arise in the analysis of our problem. The most important one is that $\mathrm{L}^2_{(k)}(\Omega)$ is not a vector space for $k\neq 0$. But, we can assume that $k=0$ with out loss of generality. Otherwise replace $\varphi$ by $\widetilde{\varphi}:= \varphi - k$ and $\F$ by $\F_k$ with $\F_k(x) := \F(x+k)$ for all $x \in \mathbb{R}$. Thus, in order to establish a weak solution of the system \eqref{steadysys}, we first reformulate the problem \eqref{weakphi}-\eqref{weakform nse}. We prove the existence of a solution to the reformulated problem \eqref{reformphi}-\eqref{nsreform} (see below) instead of \eqref{weakphi}-\eqref{weakform nse}.  We establish the equivalence of these two problems in Lemma \ref{reformtoweak}.
	
	We reduce $\mu$  to $\mu_0$, which has mean value $0$,  and it would help in proving  coercivity of an operator in the later part of this section. Let us fix $ \mu_0 =\mu - \frac{1}{|\Omega |} \int_\Omega \F'(\varphi)\d x$.  Then the reformulated problem of \eqref{weakphi}-\eqref{weakform nse} is given by 
	\begin{align} 
	\label{reformphi} \int_\Omega (\mathbf{u} \cdot \nabla \varphi) \psi \,\d x =& -\int_\Omega m(\varphi) \nabla \mu_0 \cdot \nabla \psi\, \d x, \\ 
	\label{mu-reform} \int_\Omega \mu_0 \psi \,\d x=& \int_\Omega (a \varphi -\J * \varphi) \psi \,\d x +\int_\Omega \mathrm{P}_0 (\F'(\varphi)) \psi \,\d x, \\
	\label{nsreform}
	\int_\Omega (\mathbf{u} \cdot \nabla)\mathbf{u} \cdot\mathbf{v} \,\d x+ \int_\Omega 2\nu (\varphi) \mathrm{D}\mathbf{u} \cdot \mathrm{D}\mathbf{v}\, \d x =& \int_\Omega\mu_0 \nabla \varphi \cdot \mathbf{v} \,\d x + \langle \mathbf{h} , \mathbf{v} \rangle,
	\end{align}
	where $\mathbf{u} \in \V_{\mathrm{div}} , \mu_0 \in \mathrm{V}_0, \varphi \in \mathrm{V}_0$.  Now we show that proving the existence of a solution to the equations \eqref{reformphi}-\eqref{nsreform} would also give a solution to \eqref{weakphi}-\eqref{weakform nse}.
	\begin{lemma}\label{reformtoweak}
		Let $(\mathbf{u},\mu_0,\varphi) \in\V_{{\mathrm{div}}}\times \mathrm{V}_0 \times \mathrm{V}_0$ be a solution to the system \eqref{reformphi}-\eqref{nsreform}. Then $(\mathbf{u},\mu,\varphi)$ is a solution to the weak formulation \eqref{weakphi}-\eqref{weakform nse}, where $ \mu_0 =\mu - \frac{1}{|\Omega |} \int_\Omega \F'(\varphi)\d x$. 
	\end{lemma}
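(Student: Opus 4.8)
The plan is to verify directly that setting $\mu := \mu_0 + c$ with the constant $c := \frac{1}{|\Omega|}\int_\Omega\F'(\varphi)\d x$ turns any solution $(\mathbf{u},\mu_0,\varphi)$ of \eqref{reformphi}--\eqref{nsreform} into a solution $(\mathbf{u},\mu,\varphi)$ of \eqref{weakphi}--\eqref{weakform nse}. The entire argument rests on a single observation: $\mu$ and $\mu_0$ differ only by the constant $c$, so that $\nabla\mu=\nabla\mu_0$, together with the identity $\mathrm{P}_0(\F'(\varphi))=\F'(\varphi)-c$ that follows straight from the definition \eqref{P0} of the mean-zero projection $\mathrm{P}_0$. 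I would then check the three weak identities one at a time.

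For the concentration equation, since $\nabla\mu=\nabla\mu_0$ the right-hand side of \eqref{weakphi} is \emph{verbatim} the right-hand side of \eqref{reformphi}, so \eqref{weakphi} holds with no further computation. For the chemical-potential equation I would substitute $\mu_0=\mu-c$ and $\mathrm{P}_0(\F'(\varphi))=\F'(\varphi)-c$ into \eqref{mu-reform}, obtaining
\begin{align*}
\int_\Omega\mu\psi\,\d x-c\int_\Omega\psi\,\d x=\int_\Omega(a\varphi-\J*\varphi)\psi\,\d x+\int_\Omega\F'(\varphi)\psi\,\d x-c\int_\Omega\psi\,\d x .
\end{align*}
The two terms $c\int_\Omega\psi\,\d x$ cancel for every $\psi\in\mathrm{V}$, and \eqref{weakmu} follows at once.

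The only identity requiring a genuine (if tiny) computation is the momentum equation. Comparing \eqref{nsreform} with \eqref{weakform nse}, the convective and viscous terms on the left and the forcing $\langle\mathbf{h},\mathbf{v}\rangle$ are identical, so it suffices to show that the two coupling terms agree, i.e. $\int_\Omega\mu\nabla\varphi\cdot\mathbf{v}\,\d x=\int_\Omega\mu_0\nabla\varphi\cdot\mathbf{v}\,\d x$ for all $\mathbf{v}\in\V_{\mathrm{div}}$. Their difference equals $c\int_\Omega\nabla\varphi\cdot\mathbf{v}\,\d x$, and I would integrate by parts: since $\mathbf{v}\in\V_{\mathrm{div}}\subset\mathrm{H}^1_0(\Omega;\R^n)$ is divergence free and vanishes on $\partial\Omega$, one has
\begin{align*}
\int_\Omega\nabla\varphi\cdot\mathbf{v}\,\d x=-\int_\Omega\varphi\,\mathrm{div}\,\mathbf{v}\,\d x+\int_{\partial\Omega}\varphi\,(\mathbf{v}\cdot\mathbf{n})\,\d S=0 .
\end{align*}
Hence the difference vanishes and \eqref{weakform nse} is recovered.

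There is no real obstacle here: the content of the lemma is precisely that the mean-value shift introduced to normalise $\mu_0$ into $\mathrm{V}_0$ is invisible to all three weak identities — it drops out of the gradients in \eqref{weakphi} and \eqref{weakform nse}, and it cancels algebraically in \eqref{weakmu}. The single step that is not completely automatic is the integration-by-parts identity above, where the solenoidal constraint and the no-slip boundary condition on $\mathbf{v}$ are exactly what make the extra constant in $\mu$ harmless in the momentum balance. (As a consistency remark, testing \eqref{mu-reform} with $\psi\equiv1$ and using that $a\varphi-\J*\varphi$ and $\mathrm{P}_0(\F'(\varphi))$ both have zero mean — the former by the symmetry $\J(x)=\J(-x)$ in (A1) — confirms $\mu_0\in\mathrm{V}_0$, so the definition $\mu=\mu_0+c$ is coherent.)
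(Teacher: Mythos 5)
Your proof is correct and takes essentially the same route as the paper's own argument: both rest on the observation that the constant $c=\overline{\mu}$ is invisible to the gradient terms and cancels algebraically in the chemical-potential identity via $\mathrm{P}_0(\F'(\varphi))=\F'(\varphi)-c$, and both eliminate the extra term $c\int_\Omega\nabla\varphi\cdot\mathbf{v}\,\d x$ in the momentum equation by integration by parts using that $\mathbf{v}\in\V_{\mathrm{div}}$ is divergence free and vanishes on $\partial\Omega$. Your closing consistency remark (testing \eqref{mu-reform} with $\psi\equiv 1$) is a small addition not in the paper, but the core argument is the same.
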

	\begin{proof}
		Let $(\mathbf{u},\mu_0, \varphi) \in \V_{\mathrm{div}}\times \mathrm{V}_0 \times \mathrm{V}_0$ be a solution of the system \eqref{reformphi}-\eqref{nsreform}. Let $\overline{\mu}= \frac{1}{|\Omega|} \int_\Omega \F'(\varphi)\d x$. Since $\overline{\mu}$ is a scalar, from \eqref{nsreform},  using integration by parts, one  can easily deduce that  $ \int_\Omega \overline{\mu} \nabla \varphi \cdot \mathbf{v} \d x=0 $.  Then, we have 
		\begin{align*}
		\int_\Omega (\mathbf{u} \cdot \nabla) \mathbf{u} \cdot \mathbf{v} \d x+ \int_\Omega 2\nu (\varphi) \mathrm{D}\mathbf{u} \cdot \mathrm{D}\mathbf{v} \d x =& \int_\Omega\mu_0 \nabla \varphi \cdot \mathbf{v} \d x + \langle \mathbf{h} , \mathbf{v} \rangle+ \int_\Omega \overline{\mu} \nabla \varphi \cdot \mathbf{v} \d x \\
		=& \int_\Omega\mu \nabla \varphi \cdot \mathbf{v} \d x + \langle \mathbf{h} , \mathbf{v} \rangle , 
		\end{align*}
		which gives the equation \eqref{weakform nse}. Once again, since  $\overline{\mu}$ is a scalar quantity, we can clearly see \eqref{weakphi} follows from \eqref{reformphi}. Now it is left to prove \eqref{weakmu}. From \eqref{mu-reform}, we have 
		\begin{align} \label{3.3}
		\int_\Omega \mu_0 \psi \d x=& \int_\Omega (a \varphi -\J * \varphi) \psi \d x +\int_\Omega \mathrm{P}_0 (\F'(\varphi)) \psi \d x.
		\end{align}
		Using \eqref{P0} and substituting value of $\mu_0$ in \eqref{3.3} we get, 
		\begin{align*}
		\int_\Omega \mu \psi \d x=& \int_\Omega (a \varphi -\J * \varphi) \psi \d x +\int_\Omega (\mathrm{P}_0 (\F'(\varphi))+ \overline{\mu} )\psi \d x \\
		=& \int_\Omega (a \varphi -\J * \varphi) \psi \d x +\int_\Omega \F'(\varphi)\psi \d x,
		\end{align*}
		which completes the proof. 
	\end{proof}

	\subsection{Preliminaries}
	In order to formulate the problem \eqref{reformphi}-\eqref{nsreform}  in the framework of Browder's Theorem (see Theorem \ref{Browder} below), we need some preliminaries, which we state below.
	
	Let $\X$ be a Banach space and $\X'$ be its topological dual. Let $\T$ be a function from $\X$ to $\X'$ with domain $\D=\mathrm{D}(\T) \subseteq \X.$
	\begin{definition}[Definition 2.3, \cite{MR3014456}] The function $\T$ is said to be
		\begin{itemize} 
			\item[(i)]  \emph{demicontinuous}  if for every sequence $u_k \in \D, u \in \D$ and $u_k \rightarrow u$ in $\X$ implies that $\T(u_k) \rightharpoonup \T(u)$ in $\X'$,
			\item[(ii)] \emph{hemicontinuous}  if for every $u \in \D, v \in \X$ and for every sequence of positive real numbers $t_k$ such that $u+t_k v \in \D,$ it holds that $t_k \rightarrow 0$ implies $\T(u+t_k v) \rightharpoonup \T(u)$ in $\X'$,
			\item[(iii)]  \emph{locally bounded} if for every sequence $u_k \in \D$, $u \in \D$ and $u_k \rightarrow u $ in $\X$ imply that $\T(u_k)$ is bounded in $\X'$.
		\end{itemize}
		
	\end{definition}
	From the above definition, it is clear that a demicontinuous function is hemicontinuous and locally bounded. 
	\begin{definition}[Definition 2.1 (iv), \cite{MR3014456}]
		We say that $\T$ is \emph{pseudo-monotone} if, for every sequence $u_k$  in $\X$  such that $u_k \rightharpoonup u$ in $\X$ and 
		\begin{equation*}
		\limsup_{k \rightarrow \infty}  \,_{\X'}\langle \T(u_k), u_k - u\rangle_{\X} \leq 0
		\end{equation*} 
		implies 
		\begin{equation*}
		\liminf_{k \rightarrow \infty} \,_{\X'}\langle \T(u_k), u_k-v \rangle_{\X} \geq \,_{\X'}\langle \T(u),u-v \rangle_{\X}
		\end{equation*}
		for every $v \in \X$. Moreover $\T$ is said to be \emph{monotone} if  $$ \,_{\X'}\langle \T(u)-\T(v),u-v\rangle_{\X} \geq 0, \ \text{ for every }\ u ,v\in \D.$$
	\end{definition}
	\begin{definition}
		A mapping $\T: \X \rightarrow \X'$ is said to be maximal monotone if it is monotone and its graph
		\begin{align*}
		\mathrm{G}(\T)= \left\{ (u,w) : w \in \T(u) \right\} \subset \X \times \X'
		\end{align*}
		is not properly contained in graph of any other monotone operator. In other words,  for $u\in\mathbb{X}$ and $w\in\mathbb{X}'$, the inequality $\,_{\X'}\langle w-\mathrm{T}(v),u-v\rangle_{\X}\geq 0$, for all $v\in\mathbb{X}$ implies $w=\mathrm{T}(u)$.
	\end{definition}
	
	\begin{definition}[Definition 2.3, \cite{MR3014456}]
		Let $\X$ and $\Y$ be Banach spaces. A bounded linear operator $\T:\X \rightarrow \Y$ is said to be a \emph{completely continuous operator} if for every sequence $u_k \rightharpoonup u$ in $\X$ implies $\T(u_k) \rightarrow \T(u)$ in $\Y$. 
	\end{definition}
	We can see that complete continuity implies pseudo-monotonicity (Corollary 2.12, \cite{MR3014456}).
	
	\begin{lemma}[Lemma 5.1, \cite{MR3524178} ] \label{pseudo-monotone}
		Let $\X$ be a real, reflexive Banach space and $\widetilde{\T} : \X \times \X \rightarrow \X'$ be such that for all $u \in \X$:
		\begin{enumerate}
			\item $\widetilde{\T} (u, \cdot): \X \rightarrow \X'$ is monotone and hemicontinuous.
			\item $\widetilde{\T} ( \cdot,u): \X \rightarrow \X'$ is completely continuous.
		\end{enumerate}
		Then the operator $\T:\X \rightarrow \X'$ defined by $\T(u) := \widetilde{\T}(u,u)$ is pseudo-monotone.
	\end{lemma}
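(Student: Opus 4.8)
The plan is to verify the defining property of pseudo-monotonicity directly for $\T(u)=\widetilde{\T}(u,u)$. Fix a sequence $u_k\rightharpoonup u$ in $\X$ with $\limsup_{k\to\infty}\,_{\X'}\langle \T(u_k),u_k-u\rangle_{\X}\leq 0$, and let $v\in\X$ be arbitrary. The first step is to distill both hypotheses into one auxiliary inequality valid at every test point $w\in\X$. Monotonicity of $\widetilde{\T}(u_k,\cdot)$ applied to the pair of second-slot points $(u_k,w)$ with frozen first slot $u_k$ gives
\[
\,_{\X'}\langle \widetilde{\T}(u_k,u_k),u_k-w\rangle_{\X}\geq \,_{\X'}\langle \widetilde{\T}(u_k,w),u_k-w\rangle_{\X},
\]
while complete continuity of $\widetilde{\T}(\cdot,w)$ forces $\widetilde{\T}(u_k,w)\to\widetilde{\T}(u,w)$ strongly in $\X'$. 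Pairing this strongly convergent dual sequence against the weakly convergent sequence $u_k-w\rightharpoonup u-w$ lets me pass to the limit on the right-hand side, so that
\[
\liminf_{k\to\infty}\,_{\X'}\langle \T(u_k),u_k-w\rangle_{\X}\geq \,_{\X'}\langle \widetilde{\T}(u,w),u-w\rangle_{\X}\quad\text{for all }w\in\X.\qquad(\star)
\]

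Next I would specialize $(\star)$ to $w=u$, whose right-hand side is $\,_{\X'}\langle \widetilde{\T}(u,u),0\rangle_{\X}=0$; this gives $\liminf_k\,_{\X'}\langle \T(u_k),u_k-u\rangle_{\X}\geq 0$, and combined with the standing hypothesis $\limsup_k\leq 0$ it upgrades the bracket to a genuine limit, $\lim_{k\to\infty}\,_{\X'}\langle \T(u_k),u_k-u\rangle_{\X}=0$. This identity is exactly what decouples the target expression: writing $u_k-v=(u_k-u)+(u-v)$ and discarding the term that tends to $0$ yields $\liminf_k\,_{\X'}\langle \T(u_k),u_k-v\rangle_{\X}=\liminf_k\,_{\X'}\langle \T(u_k),u-v\rangle_{\X}$.

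The remaining step, and the one I expect to be the genuine obstacle, is a Minty-type argument to replace the \emph{frozen} second slot $\widetilde{\T}(u,w)$ in $(\star)$ by $\widetilde{\T}(u,u)=\T(u)$. I would test $(\star)$ along the segment $w_t:=(1-t)u+tv$, $t\in(0,1)$, for which $u-w_t=t(u-v)$; using the limit just established to drop the $u_k-u$ contribution on the left and then dividing by $t>0$ produces
\[
\liminf_{k\to\infty}\,_{\X'}\langle \T(u_k),u-v\rangle_{\X}\geq \,_{\X'}\langle \widetilde{\T}(u,w_t),u-v\rangle_{\X}.
\]
Letting $t\to 0^+$ and invoking hemicontinuity of $\widetilde{\T}(u,\cdot)$, so that $\widetilde{\T}(u,w_t)\rightharpoonup\widetilde{\T}(u,u)$ in $\X'$, passes the right-hand side to $\,_{\X'}\langle \T(u),u-v\rangle_{\X}$. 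Combining this with the decoupling identity of the previous paragraph gives $\liminf_k\,_{\X'}\langle \T(u_k),u_k-v\rangle_{\X}\geq \,_{\X'}\langle \T(u),u-v\rangle_{\X}$ for every $v\in\X$, which is precisely pseudo-monotonicity. The points requiring care are the interchange of $\liminf$ with multiplication by the positive scalar $t$ and the legitimacy of passing strong$\times$weak pairings to the limit; both are routine but must be tracked so that no sign or order of limits is reversed.
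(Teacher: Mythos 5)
Your proof is correct. The paper itself does not prove this lemma---it quotes it as Lemma 5.1 of \cite{MR3524178}---and your argument (freeze the first slot and use monotonicity plus complete continuity to obtain the auxiliary inequality $(\star)$, specialize to $w=u$ to upgrade the bracket to a genuine zero limit, then run Minty's trick along the segment $w_t=(1-t)u+tv$ and invoke hemicontinuity) is exactly the standard argument used to prove that cited lemma, with all the strong$\times$weak pairing and $\liminf$ manipulations handled soundly.
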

	
	\begin{definition}
		Let $\X$ be a real Banach space and $f:\X \rightarrow (-\infty, \infty]$ be a functional on $\X$. A linear functional $g \in \X'$ is called \emph{subgradient} of $f$ at $u$ if $f(u) \not\equiv \infty$ and 
		\begin{align*}
		f(v) \geq f(u) + \!_{\X'}\langle g, v-u \rangle_{\X},
		\end{align*}
		holds for all $v \in \X$.
	\end{definition}
	We know that subgradient of a functional need not be unique. The set of all subgradients of $f$ at $u$ is called \emph{subdifferential} of $f$ at $u$ and is denoted by  $\partial f (u)$.We say that $f$ is \emph{Gateaux differentiable} at $u$ in $\X$ if $\partial f(u)$ consists of exactly one element (see \cite{MR1195128}).
	
	\begin{lemma}[Theorem A, \cite{MR262827}] \label{rockfellar}
		If $f$ is a lower semicontinuous, proper convex function on $\X$ (that is, $f$ is a convex function  and $f$ takes values in the extended real number line such that $f(u)<+\infty $
		for at least one $u\in\X$ and $f(u)>-\infty $
		for every $u\in\X$), then $\partial f$ is a maximal monotone operator from $\X$ to $\X'$.
	\end{lemma}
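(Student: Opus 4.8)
The plan is to verify the two defining properties of a maximal monotone operator separately: first that $\partial f$ is monotone, and then that its graph admits no proper monotone extension. Monotonicity is the routine half and uses only the subgradient inequality, not convexity or lower semicontinuity. Given $g_1\in\partial f(u_1)$ and $g_2\in\partial f(u_2)$, I would write the two defining inequalities $f(u_2)\geq f(u_1)+{}_{\X'}\langle g_1,u_2-u_1\rangle_{\X}$ and $f(u_1)\geq f(u_2)+{}_{\X'}\langle g_2,u_1-u_2\rangle_{\X}$, add them, and cancel $f(u_1)+f(u_2)$ to obtain ${}_{\X'}\langle g_1-g_2,u_1-u_2\rangle_{\X}\geq 0$. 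The convexity and lower semicontinuity hypotheses enter only in the proof of maximality.

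For maximality I would use the range (surjectivity) criterion for monotone operators, working in the reflexive setting that is relevant to the Hilbert-space applications of this paper. After renorming $\X$ (Troyanski/Asplund) so that $\X$ and $\X'$ are strictly convex, the normalized duality map $\mathcal{J}:\X\to\X'$ is single-valued, strictly monotone, bounded and demicontinuous. The key reduction is that once I establish the range condition that $\partial f+\mathcal{J}$ maps \emph{onto} $\X'$ (in a Hilbert space this is the Minty condition that $\mathrm{I}+\partial f$ is onto), maximality follows by a short argument. Indeed, suppose a pair $(\bar u,\bar w)$ is monotonically related to the whole graph of $\partial f$, i.e. ${}_{\X'}\langle \bar w-g,\bar u-v\rangle_{\X}\geq 0$ for every $(v,g)$ with $g\in\partial f(v)$. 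Surjectivity applied to the target $\bar w+\mathcal{J}\bar u$ produces $u_0$ and $w_0\in\partial f(u_0)$ with $\bar w+\mathcal{J}\bar u=w_0+\mathcal{J}u_0$; testing the monotonicity relation against $(u_0,w_0)$ and substituting $\bar w-w_0=\mathcal{J}u_0-\mathcal{J}\bar u$ gives ${}_{\X'}\langle \mathcal{J}u_0-\mathcal{J}\bar u,u_0-\bar u\rangle_{\X}\leq 0$, so strict monotonicity of $\mathcal{J}$ forces $u_0=\bar u$, whence $\bar w=w_0\in\partial f(\bar u)$ and $(\bar u,\bar w)$ lies in the graph.

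To prove the range condition I would fix $w\in\X'$ and minimize the functional $\Phi(u)=f(u)+\tfrac12\|u\|^2-{}_{\X'}\langle w,u\rangle_{\X}$. Because $f$ is proper, convex and lower semicontinuous, Hahn--Banach supplies a continuous affine minorant $f(u)\geq {}_{\X'}\langle \ell,u\rangle_{\X}+c$, so the quadratic term makes $\Phi$ coercive and bounded below; $\Phi$ is also convex and weakly lower semicontinuous. By reflexivity and the direct method of the calculus of variations $\Phi$ attains its minimum at some $u_0$, where $0\in\partial\Phi(u_0)$. Since $\tfrac12\|\cdot\|^2$ is finite and continuous everywhere, the Moreau--Rockafellar sum rule yields $\partial\Phi(u_0)=\partial f(u_0)+\mathcal{J}u_0-w$, and therefore $w\in\partial f(u_0)+\mathcal{J}u_0$, which is exactly surjectivity of $\partial f+\mathcal{J}$.

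The principal obstacle is maximality, and within it two points need care. First, the subdifferential sum rule is not automatic; it is precisely the continuity of the quadratic perturbation (a constraint qualification) that justifies $\partial(f+\tfrac12\|\cdot\|^2)=\partial f+\mathcal{J}$. Second, and more seriously, the scheme above relies on reflexivity both for the renorming that makes $\mathcal{J}$ single-valued and strictly monotone and for the direct method; for a \emph{general} non-reflexive Banach space $\X$ this route fails, and one must invoke Rockafellar's original argument through the conjugate $f^{*}$ together with a Br{\o}ndsted--Rockafellar type density of $\mathrm{D}(\partial f)$, which is the genuinely difficult ingredient and the reason the statement is cited rather than reproved here. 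Since the spaces arising in this paper are Hilbert spaces, the reflexive argument above is entirely sufficient for our purposes.
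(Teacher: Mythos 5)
The paper does not prove this lemma at all: it is imported verbatim as Theorem~A of Rockafellar's paper \cite{MR262827}, so there is no internal proof to compare against, and any correct argument you supply is by definition a different route. Your argument is correct in the setting it claims to cover, and it is the standard Minty--Browder one. The monotonicity half is routine and right. For maximality, your reduction from surjectivity of $\partial f+\mathcal{J}$ to maximality is the classical trick: after a Troyanski renorming making $\X$ and $\X'$ strictly convex, the duality mapping $\mathcal{J}$ is single-valued and strictly monotone, and your computation (testing the monotonically related pair against the solution of $w_0+\mathcal{J}u_0=\bar{w}+\mathcal{J}\bar{u}$) is valid; note also, as your argument implicitly requires, that monotonicity and maximality depend only on the duality pairing and not on the equivalent norm chosen, so the renorming is harmless. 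The surjectivity step via the direct method applied to $\Phi(u)=f(u)+\tfrac{1}{2}\|u\|^{2}-{}_{\X'}\langle w,u\rangle_{\X}$ is also sound: the Hahn--Banach affine minorant gives coercivity, convexity plus strong lower semicontinuity gives weak lower semicontinuity, reflexivity gives a minimizer, and the Moreau--Rockafellar sum rule applies because the quadratic perturbation is everywhere finite and continuous. The one genuine caveat is scope: the lemma as stated is for an arbitrary Banach space $\X$, whereas your route needs reflexivity twice (for the renorming and for the direct method), and you candidly flag that the non-reflexive case requires Rockafellar's original argument through $f^{*}$ and Br{\o}ndsted--Rockafellar. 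So you have proved a restricted version of the cited theorem, not the full statement. This restriction costs nothing here: the lemma is invoked only in Proposition~\ref{maximalmonotone}, for $f$ on $\L^2_{(0)}(\Omega)$ and on $\mathrm{V}_0$, both Hilbert spaces. In short, the citation buys the authors full Banach-space generality for free, while your proof buys a self-contained argument that is adequate for every use the paper actually makes of the lemma.
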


	Now we state Browder's theorem, which we use to prove the existence of solution to the problem \eqref{reformphi}-\eqref{nsreform}.
	\begin{theorem}[Theorem 32.A. in \cite{MR1033498}, Browder] \label{Browder} 
		\hfill
		\begin{enumerate}
			\item Let $\Y$ be a non-empty, closed and convex subset of a real and reflexive Banach space $\X$.
			\item Let $\T: \Y \rightarrow \mathcal{P}(\X')$ be a maximal monotone operator, where  $\mathcal{P}(\X')$ denotes the power set of $\X'$. 
			\item Let $\S: \Y \rightarrow \X'$ be a pseudo-monotone, bounded and demicontinuous mapping.
			\item If the set $\Y$ is unbounded, then the operator $\S$ is $\T$-coercive with respect to the fixed element $b \in \X'$, that is, there exists an element $u_0 \in \Y \cap \mathrm{D} (\T)$ and $R>0$ such that 
			\begin{align*}
			\!_{\X'}\langle \S(u), u-u_0 \rangle_{\X} > \,_{\X'}\langle b, u-u_0 \rangle_{\X},
			\end{align*}
			for all $u \in \Y$ with $\| u\|_{\X}>R$.
		\end{enumerate}Then the problem 
		\begin{align}
		b\in \T (u) + \S(u)
		\end{align}
		has a solution $u \in \Y \cap \mathrm{D}(\T)$.
	\end{theorem}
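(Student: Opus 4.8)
The statement is Browder's surjectivity theorem for the sum of a maximal monotone and a pseudo\-monotone operator, and the natural route is to reduce it to the existence theorem for a \emph{single} pseudo\-monotone, bounded, demicontinuous and coercive operator on a reflexive space (the main theorem on pseudo\-monotone operators, \`a la Br\'ezis, itself proved by Galerkin approximation). The plan is to regularize the possibly multivalued, unbounded operator $\T$ by its Yosida approximations, solve the regularized single\-operator problem exactly, and then pass to the limit. Since $\X$ is reflexive, I would first invoke Asplund's renorming theorem to assume $\X$ and $\X'$ are both strictly convex; then the normalized duality map $\mathcal{J}\colon\X\to\X'$ is single\-valued, monotone, bounded and demicontinuous, and for each $\lambda>0$ the operator $\mathcal{J}+\lambda\T$ is surjective onto $\X'$. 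This lets me define, for each $x$, the resolvent $\mathrm{R}_\lambda x$ and the Yosida approximation $\T_\lambda x=\lambda^{-1}\mathcal{J}(x-\mathrm{R}_\lambda x)$, a single\-valued, everywhere\-defined, bounded, monotone and demicontinuous operator with $\T_\lambda x\in\T(\mathrm{R}_\lambda x)$.

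To respect the constraint I would pose the regularized problem as a variational inequality on $\Y$ (equivalently, absorb $\partial\mathrm{I}_\Y$ into the monotone part, where $\mathrm{I}_\Y$ is the indicator of the closed convex set $\Y$, maximal monotone by Lemma~\ref{rockfellar}), so that $\S$ is only ever evaluated on $\Y$. Since a monotone, bounded, demicontinuous operator is pseudo\-monotone and the sum of two pseudo\-monotone operators is pseudo\-monotone, the map $\S+\T_\lambda$ is pseudo\-monotone, bounded and demicontinuous, and the $\T$\-coercivity hypothesis (with base point $u_0\in\Y\cap\mathrm{D}(\T)$) renders it coercive with respect to $b$. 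The existence theorem on a closed convex subset of a reflexive space then produces, for each $\lambda>0$, a point $u_\lambda\in\Y$ with
\begin{equation*}
\langle \S(u_\lambda)+\T_\lambda(u_\lambda)-b,\,v-u_\lambda\rangle \geq 0 \quad \text{for all } v\in\Y.
\end{equation*}

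Next I would extract uniform a~priori bounds: testing against $v=u_0$ and using the monotonicity of $\T_\lambda$ together with the coercivity inequality bounds $\|u_\lambda\|_{\X}$ uniformly in $\lambda$; boundedness of $\S$ then bounds $\S(u_\lambda)$, whence $\T_\lambda(u_\lambda)$ is bounded in $\X'$, and one controls the resolvents and the defect $\|u_\lambda-\mathrm{R}_\lambda u_\lambda\|\to0$. By reflexivity I pass to a subsequence with $u_\lambda\rightharpoonup u$, $\mathrm{R}_\lambda u_\lambda\rightharpoonup u$, $\T_\lambda(u_\lambda)\rightharpoonup\eta$ and $\S(u_\lambda)\rightharpoonup\chi$, and show $u\in\Y$.

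The crux — and the step I expect to be the main obstacle — is the limit passage, where the maximal monotone structure of $\T$ and the pseudo\-monotone structure of $\S$ must be made to cooperate in the single limit $\lambda\downarrow0$. Using the monotonicity of $\T_\lambda$ against the graph of $\T$, the inclusion $\T_\lambda u_\lambda\in\T(\mathrm{R}_\lambda u_\lambda)$, and the convergences above, I would first establish $\limsup_\lambda\langle\S(u_\lambda),u_\lambda-u\rangle\le0$; this is the delicate point, as it requires simultaneously controlling the cross term $\langle\T_\lambda u_\lambda,\mathrm{R}_\lambda u_\lambda-u\rangle$ and exploiting $\|u_\lambda-\mathrm{R}_\lambda u_\lambda\|\to0$. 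Granting it, the pseudo\-monotonicity of $\S$ yields $\liminf_\lambda\langle\S(u_\lambda),u_\lambda-v\rangle\ge\langle\S(u),u-v\rangle$ for every $v$; combined with $\langle\S(u_\lambda),u_\lambda-u\rangle\to0$ this forces $\langle\chi-\S(u),u-v\rangle\ge0$ for all $v$, hence $\chi=\S(u)$ and $b-\eta=\S(u)$. Finally, passing to the limit in $\langle\T_\lambda u_\lambda-w,\mathrm{R}_\lambda u_\lambda-v\rangle\ge0$ for every $(v,w)\in\mathrm{G}(\T)$ gives $\langle\eta-w,u-v\rangle\ge0$, and the maximal monotonicity of $\T$ forces $u\in\mathrm{D}(\T)$ with $\eta\in\T(u)$. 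Combining, $b=\S(u)+\eta\in\S(u)+\T(u)$ with $u\in\Y\cap\mathrm{D}(\T)$, as claimed. The renorming, regularization and coercivity steps are essentially bookkeeping once the framework is fixed; the entire difficulty is concentrated in the joint monotone/pseudo\-monotone limit passage above.
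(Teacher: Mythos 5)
The paper never proves this statement: it is imported verbatim as Theorem 32.A of Zeidler \cite{MR1033498} and used as a black box (in Lemma \ref{browderproof}), so the only meaningful comparison is with the proof in that cited source. Your proposal is essentially a reconstruction of that proof --- Asplund renorming, Yosida regularization $\T_\lambda$ of the maximal monotone part, solution of the regularized problem by the main theorem on pseudo-monotone operators in variational-inequality form on $\Y$, a priori bounds from coercivity, and a joint maximal-monotone/pseudo-monotone limit passage --- so the architecture is the correct, standard one.

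Two of the steps you dismiss as bookkeeping are, however, not correct as written. First, the coercivity of $\S+\T_\lambda$: monotonicity of $\T_\lambda$ with an arbitrary base point $u_0\in\Y\cap\mathrm{D}(\T)$ only yields $\langle\T_\lambda u,u-u_0\rangle\geq\langle\T_\lambda u_0,u-u_0\rangle\geq-\|w_0\|_{\X'}\|u-u_0\|_{\X}$ (valid for any $w_0\in\T(u_0)$, uniformly in $\lambda$), and this negative term cannot be absorbed by hypothesis (4), which is a strict inequality rather than a quantitative growth condition; so $\S+\T_\lambda$ need not satisfy the coercivity required by the VI existence theorem. The standard repair is the translation trick: replace $\T(\cdot)$, $\S(\cdot)$, $b$ by $\T(\cdot+u_0)-w_0$, $\S(\cdot+u_0)$, $b-w_0$ with $w_0\in\T(u_0)$, so that $0$ belongs to the translated operator at $0$; then its Yosida approximation vanishes at $0$, $\langle\T'_\lambda u,u\rangle\geq0$, and hypothesis (4) alone closes the argument. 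Second, your concluding identification $b=\S(u)+\eta$ does not follow: the limit of the variational inequality only gives $\langle\S(u)+\eta-b,v-u\rangle\geq0$ for all $v\in\Y$, and since $u$ may lie on the boundary of $\Y$ you cannot upgrade this to an equality (the same objection applies to your deduction $\chi=\S(u)$). The correct ending uses exactly the ingredients you already have: set $\xi:=b-\S(u)$ and, for every $(v,w)\in\mathrm{G}(\T)$, write $\langle\xi-w,u-v\rangle=\langle\xi-\eta,u-v\rangle+\langle\eta-w,u-v\rangle$; the first term is $\geq0$ by the limit VI because $v\in\mathrm{D}(\T)\subseteq\Y$, the second is $\geq0$ by your graph-monotonicity limit, and maximality of $\T$ then gives $\xi\in\T(u)$, i.e.\ $b\in\T(u)+\S(u)$, without ever claiming $\eta=b-\S(u)$. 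Both repairs are standard and are carried out in the cited source.
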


	\subsection{The functional $f$} We mainly follow the work of \cite{MR3524178} (local Cahn-Hilliard-Navier-Stokes equations) to establish the solvability results of the system \ref{steadysys}. Before we proceed to prove our main result, we first consider the following functional and study its properties. Let us define 
	\begin{align}
	\label{functional} f(\varphi):= \frac{1}{4}\int_\Omega \int_\Omega \J(x-y)(\varphi(x)-\varphi(y))^2\d x \d y + \int_\Omega \mathrm{G}(\varphi(x))\d x,
	\end{align}
	where $\varphi \in \L^2_{(0)}(\Omega)$ and $\mathrm{G}$ is given in \eqref{decomp of F}. Using assumption \ref{decomp of F}, we define $\mathrm{G}(x)= \infty$ for $x \notin  [-1, 1]$. The domain of $f$ is given by
	\begin{align} \label{domain0f_f}
	\mathrm{D}(f) = \left\{ \varphi \in  \L^2_{(0)}(\Omega) :  \mathrm{G}(\varphi) \in \L^1(\Omega) \right\}.
	\end{align}
	For $\varphi \notin \mathrm{D}(f)$, we set $f(\varphi)= +\infty$. 
	Note that $ \mathrm{D}(f) \neq \varnothing$.
	
	Given a functional $f:\L^2_{(0)}(\Omega) \rightarrow (-\infty,\infty]$,   its subgradient maps from $\L^2_{(0)}(\Omega)$ to $ \mathcal{P}{(\L^2_{(0)}(\Omega)')}$. We write $\partial_{\L^2_{(0)}}f$ as the subgradient  of the functional $f:\L^2_{(0)}(\Omega) \rightarrow (-\infty,\infty]$. Since $\mathrm{V}_0 \hookrightarrow\mathrm{L}^2_{(0)}(\Omega)$, we can also consider $f$ as a functional from $\mathrm{V}_0$
	to $(-\infty,\infty]$, and hence we have to distinguish between its different subgradients. If we consider $f:\mathrm{V}_0 \rightarrow (-\infty,\infty]$, then the subgradient of $f$ is denoted by $\partial_{\mathrm{V}_0} f$.	
	
	\begin{proposition} \label{gatuexderivative}
		Let $f$ be defined as in \eqref{functional}. Then $f$ is Gateaux differentiable on $\L^2_{(0)} (\Omega)$ and we have 
		\begin{align}\label{sub}
		\partial_{\L^2_{(0)}} f(\varphi) = a \varphi -\J*\varphi + \mathrm{P}_0 \mathrm{G}'(\varphi).
		\end{align}
		and 
		\begin{align*}
		\mathrm{D}(\partial_{\L^2_{(0)}}f) = \left\{ \varphi \in \L^2_{(0)}(\Omega) : \mathrm{G}'(\varphi) \in \mathrm{H} \right\}.
		\end{align*}
		Furthermore, it holds that 
		\begin{align}\label{3.10}
		\|\partial_{\L^2_{(0)}}f(\varphi)\|\leq \left(\|a\|_{\mathrm{L}^{\infty}}+\|\J\|_{\mathrm{L}^1}\right)\|\varphi\|+\|\mathrm{G}'(\varphi)\|\leq 2a^*\|\varphi\|+\|\mathrm{G}'(\varphi)\|. 
		\end{align}
	\end{proposition}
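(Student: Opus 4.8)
The plan is to split $f = f_1 + f_2$, where $f_1(\varphi) := \frac14\int_\Omega\int_\Omega \J(x-y)(\varphi(x)-\varphi(y))^2\,\d x\,\d y$ is the nonlocal quadratic form and $f_2(\varphi) := \int_\Omega \mathrm{G}(\varphi(x))\,\d x$ is the convex integral functional, and to treat each piece separately. First I would expand the square and use the symmetry $\J(x-y)=\J(y-x)$ from (A1) together with Fubini's theorem to rewrite $f_1(\varphi) = \frac12(a\varphi,\varphi) - \frac12(\J*\varphi,\varphi)$. Since $|f_1(\varphi)|\leq\frac12(\|a\|_{\L^\infty}+\|\J\|_{\L^1})\|\varphi\|^2$, this is a continuous quadratic form on $\mathrm{H}$, hence Gateaux differentiable with derivative $a\varphi - \J*\varphi$. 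A short computation, again using the symmetry of $\J$ and Fubini, shows that $\int_\Omega(a\varphi - \J*\varphi)\,\d x = 0$, so this derivative already lies in $\L^2_{(0)}(\Omega)$ and requires no projection; this is precisely why only the $\mathrm{G}'$ term will carry $\mathrm{P}_0$ in \eqref{sub}.

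Next I would record that $f$ is proper, convex and lower semicontinuous on $\L^2_{(0)}(\Omega)$. Lower semicontinuity follows from the continuity of $f_1$ and Fatou's lemma applied to $f_2$ (as $\mathrm{G}$ is bounded below). For convexity I would compute the second variation of $f$ in a direction $\psi$, namely $\int_\Omega(a + \mathrm{G}''(\varphi))\psi^2\,\d x - \int_\Omega \psi\,(\J*\psi)\,\d x$, and invoke Assumption (A9): since $\mathrm{G}'' = \F'' + \kappa$, one has $a + \mathrm{G}'' \geq C_0 + \kappa$, while Young's convolution inequality gives $\int_\Omega\psi\,(\J*\psi)\,\d x \leq \|\J\|_{\L^1}\|\psi\|^2 \leq (C_0+\kappa)\|\psi\|^2$, and the two bounds cancel to yield non-negativity. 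With convexity and lower semicontinuity in place, Lemma \ref{rockfellar} shows $\partial_{\L^2_{(0)}}f$ is maximal monotone, so it suffices to produce one subgradient to conclude Gateaux differentiability.

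To identify the subgradient I would compute the directional derivative $\lim_{t\to0}t^{-1}(f(\varphi+t\psi)-f(\varphi))$ for $\psi\in\L^2_{(0)}(\Omega)$. The quadratic part contributes $(a\varphi-\J*\varphi,\psi)$ by the first paragraph. For the convex part, convexity of $\mathrm{G}$ makes the pointwise difference quotients $t^{-1}(\mathrm{G}(\varphi+t\psi)-\mathrm{G}(\varphi))$ monotone in $t$, so a monotone convergence argument lets me pass to the limit and obtain $\int_\Omega\mathrm{G}'(\varphi)\psi\,\d x$, provided $\mathrm{G}'(\varphi)\in\mathrm{H}$; since $\psi$ has zero mean, this equals $(\mathrm{P}_0\mathrm{G}'(\varphi),\psi)$. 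The resulting directional derivative is linear and bounded in $\psi$, so the subdifferential is the single element $a\varphi-\J*\varphi+\mathrm{P}_0\mathrm{G}'(\varphi)$, which proves both Gateaux differentiability and \eqref{sub}; the characterization of $\mathrm{D}(\partial_{\L^2_{(0)}}f)$ then reads off as exactly the condition $\mathrm{G}'(\varphi)\in\mathrm{H}$ that renders the limit finite.

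Finally, the bound \eqref{3.10} follows from the triangle inequality together with $\|a\varphi\|\leq\|a\|_{\L^\infty}\|\varphi\|$, Young's convolution inequality $\|\J*\varphi\|\leq\|\J\|_{\L^1}\|\varphi\|$, and the contraction property $\|\mathrm{P}_0\mathrm{G}'(\varphi)\|\leq\|\mathrm{G}'(\varphi)\|$ of the orthogonal projection; the second inequality uses $\|a\|_{\L^\infty}\leq\|\J\|_{\L^1}$ and the definition of $a^*$. The step I expect to be the main obstacle is the rigorous passage to the limit in the directional derivative of the singular term $\int_\Omega\mathrm{G}(\varphi)\,\d x$: one must keep $\varphi+t\psi$ admissible for small $t$ and control the blow-up of $\mathrm{G}'$ near $\pm1$, which is exactly where convexity (monotone difference quotients) and the hypothesis $\mathrm{G}'(\varphi)\in\mathrm{H}$ do the essential work.
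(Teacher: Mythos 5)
Your proposal is correct in its main line and, at its core, does what the paper's proof does: compute the difference quotient of $f$, identify its limit as $(a\varphi-\J*\varphi+\mathrm{G}'(\varphi),\psi)$, and use the projection $\mathrm{P}_0$ to land in $\L^2_{(0)}(\Omega)$, with \eqref{3.10} then immediate. The difference is one of rigor and packaging. The paper treats both terms of $f$ at once, expands the quadratic part, and simply asserts the convergence of the $\mathrm{G}$-part of the quotient; it says nothing about why the limiting functional is the \emph{only} subgradient, nor about the domain characterization. You split $f=f_1+f_2$, note that $a\varphi-\J*\varphi$ already has zero mean (this is exactly what makes the right-hand side of \eqref{sub} a genuine element of $\L^2_{(0)}(\Omega)$, a fact the paper uses silently), justify the limit of the singular term via monotonicity of convex difference quotients, and obtain single-valuedness of $\partial_{\L^2_{(0)}}f(\varphi)$ from linearity and boundedness of the directional derivative. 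Those are real improvements over the paper's two-line computation; the convexity/lower-semicontinuity/maximal-monotonicity material you front-load, on the other hand, duplicates what the paper proves separately afterwards (Lemma \ref{lowersemi} and Proposition \ref{maximalmonotone}) and is not needed for this proposition.

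One assertion in your proposal is wrong, though harmlessly so: ``Lemma \ref{rockfellar} shows $\partial_{\L^2_{(0)}}f$ is maximal monotone, so it suffices to produce one subgradient to conclude Gateaux differentiability.'' Maximal monotone operators can be multivalued --- the subdifferential of $x\mapsto|x|$ at the origin is $[-1,1]$ and is maximal monotone --- so exhibiting one subgradient gives nonemptiness of $\partial_{\L^2_{(0)}}f(\varphi)$, not that it is a singleton. Delete that sentence; your subsequent argument (finite, linear, bounded directional derivative $\Rightarrow$ the subdifferential equals that single functional) is the correct route and does not rely on it. Finally, the obstacle you flag at the end is genuine and is glossed by the paper as well: in directions $\psi$ for which $\varphi+t\psi\notin\mathrm{D}(f)$ for all $t>0$ the directional derivative is $+\infty$, so ``Gateaux differentiable'' can only be read in the paper's sense ($\partial_{\L^2_{(0)}}f(\varphi)$ a singleton), and the inclusion $\mathrm{D}(\partial_{\L^2_{(0)}}f)\subseteq\left\{\varphi:\mathrm{G}'(\varphi)\in\mathrm{H}\right\}$ requires an additional truncation/test-direction argument that neither you nor the paper supplies.
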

	\begin{proof}
		Let $h \in \L^2_{(0)}(\Omega)$. Then, we have
		\begin{align*}
			&	\frac{f(\varphi + \epsilon h)-f(\varphi)}{\epsilon} \nonumber\\&= \frac{1}{\epsilon} \left[\frac{1}{4}\int_\Omega \int_\Omega \J(x-y)((\varphi+ \epsilon h)(x)-(\varphi+\epsilon h)(y))^2\d x \d y + \int_\Omega \mathrm{G}((\varphi+\epsilon h)(x))\d x  \right. \\
			& 	\hspace{2cm}\left. -\left(\frac{1}{4}\int_\Omega \int_\Omega \J(x-y)((\varphi)(x)-\varphi(y))^2\d x \d y + \int_\Omega \mathrm{G}(\varphi(x))\d x \right) \right] \\
			&= \frac{1}{4}\int_\Omega \int_\Omega \J(x-y)(2(\varphi(x)-\varphi(y))(h(x)-h(y)) + \epsilon (h(x)-h(y))^2) \\
			& \hspace{1cm}+ \frac{1}{\epsilon} \int_\Omega (\mathrm{G}((\varphi+\varepsilon h)(x)) - \mathrm{G}(\varphi(x)) )\d x .
			\end{align*}	
		Hence 
		\begin{align}
		\lim_{\epsilon \rightarrow 0 }\frac{f(\varphi + \epsilon h)-f(\varphi)}{\epsilon} = (a \varphi- \J*\varphi + \mathrm{G}'(\varphi), h).
		\end{align} 
	
		Since $\mathrm{P}_0$ is the orthogonal projection onto $\L^2_{(0)}(\Omega)$, we get \eqref{sub}. Note that \eqref{3.10} is an immediate consequence of \eqref{sub}. 
	\end{proof}

	\begin{lemma} \label{lowersemi}
		The functional $f: \L^2_{(0)}(\Omega) \rightarrow (-\infty,\infty]$ defined by \eqref{functional} is proper convex and lower semicontinuous with $\mathrm{D}(f) \neq \varnothing$.
	\end{lemma}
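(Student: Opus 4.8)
The plan is to write $f=Q+\mathcal{G}$ as the sum of the nonlocal quadratic interaction $Q(\varphi):=\frac14\int_\Omega\int_\Omega \J(x-y)(\varphi(x)-\varphi(y))^2\d x\d y$ and the local part $\mathcal{G}(\varphi):=\int_\Omega \mathrm{G}(\varphi(x))\d x$, and to verify the four claims (nonempty domain, properness, convexity, lower semicontinuity) for the sum. The first move is to symmetrize $Q$ using $\J(x-y)=\J(y-x)$ and $a(x)=\int_\Omega \J(x-y)\d y$ from (A1): integrating out one variable in each squared term gives
$$Q(\varphi) = \frac12\int_\Omega a(x)\varphi(x)^2\d x - \frac12\int_\Omega (\J\ast\varphi)(x)\varphi(x)\d x.$$
Since $\|a\|_{\L^\infty}\le\|\J\|_{\L^1}$ and $(\J\ast\varphi,\varphi)\le\|\J\|_{\L^1}\|\varphi\|^2$ by Young's convolution inequality, $Q$ is a bounded quadratic form on $\L^2_{(0)}(\Omega)$; in particular $Q(\varphi)\ge -\frac12\|\J\|_{\L^1}\|\varphi\|^2>-\infty$ for every $\varphi$.

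For $\mathrm{D}(f)\ne\varnothing$ and properness I would test $\varphi\equiv 0\in\L^2_{(0)}(\Omega)$: then $Q(0)=0$ and $\mathcal{G}(0)=|\Omega|\,\mathrm{G}(0)<\infty$ because $0\in(-1,1)$, so $0\in\mathrm{D}(f)$. Since $\mathrm{G}$ is convex, lower semicontinuous and $\equiv+\infty$ off $[-1,1]$ with $\mathrm{G}\to+\infty$ at $\pm1$, its restriction to the compact interval $[-1,1]$ attains a finite minimum $\mathrm{G}_{\min}$; hence $\mathcal{G}(\varphi)\ge|\Omega|\,\mathrm{G}_{\min}>-\infty$ for all $\varphi$, which together with the bound on $Q$ shows $f(\varphi)>-\infty$ everywhere. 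Thus $f$ is proper with nonempty domain.

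The crux is convexity, and this is where I expect the real work to lie, because $Q$ alone is \emph{not} convex: the kernel $\J$ need not be pointwise nonnegative, so $Q$ is not sign-definite, and convexity must come from a genuine cancellation with $\mathcal{G}$ supplied by (A9). Writing $\mathrm{G}=\mathrm{F}+\frac{\kappa}{2}s^2$ (so $\mathrm{G}''=\mathrm{F}''+\kappa$ on $(-1,1)$), I would split
$$ f(\varphi) = \underbrace{\int_\Omega\Big[\mathrm{G}(\varphi)+\tfrac12\big(a(x)-(C_0+\kappa)\big)\varphi^2\Big]\d x}_{=:f_1(\varphi)} + \underbrace{\tfrac12(C_0+\kappa)\|\varphi\|^2-\tfrac12(\J\ast\varphi,\varphi)}_{=:f_2(\varphi)}. $$
For $f_2$ the associated symmetric bilinear form $(C_0+\kappa)(\varphi,\psi)-(\J\ast\varphi,\psi)$ is positive semidefinite, since $(\J\ast\varphi,\varphi)\le\|\J\|_{\L^1}\|\varphi\|^2\le(C_0+\kappa)\|\varphi\|^2$ by Young's inequality together with $\|\J\|_{\L^1}\le C_0+\kappa$ from (A9); hence $f_2$ is a nonnegative quadratic form and is convex. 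For $f_1$ the integrand $j(x,s):=\mathrm{G}(s)+\frac12(a(x)-(C_0+\kappa))s^2$ satisfies $\partial_s^2 j = \mathrm{G}''(s)+a(x)-(C_0+\kappa)=\mathrm{F}''(s)+a(x)-C_0\ge0$ on $(-1,1)$ by (A9) and equals $+\infty$ off $[-1,1]$; extending a function convex on $(-1,1)$ by $+\infty$ (with $\mathrm{G}\to+\infty$ at the endpoints) keeps it convex on $\R$, so $s\mapsto j(x,s)$ is convex for a.e.\ $x$, whence $f_1$ is convex as the integral of a convex, bounded-below integrand. Therefore $f=f_1+f_2$ is convex.

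Finally, for lower semicontinuity it suffices (as $f$ is convex) to work with the strong topology of $\L^2_{(0)}(\Omega)$. The form $Q$ is continuous there by the boundedness above, since for $\varphi_n\to\varphi$ the difference $Q(\varphi_n)-Q(\varphi)$ is controlled by $\|\varphi_n-\varphi\|\,\|\varphi_n+\varphi\|\to0$. For $\mathcal{G}$, given $\varphi_n\to\varphi$ in $\L^2$ I would pass to a subsequence converging a.e.\ and apply Fatou's lemma to the nonnegative integrand $\mathrm{G}(\varphi_n)-\mathrm{G}_{\min}$, using the lower semicontinuity of $\mathrm{G}$, to obtain $\liminf_n\mathcal{G}(\varphi_n)\ge\mathcal{G}(\varphi)$; the usual subsequence argument upgrades this to the full sequence. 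Being the sum of a continuous and a lower semicontinuous functional, $f$ is lower semicontinuous, which completes all four claims and, via Lemma \ref{rockfellar}, sets up the maximal monotonicity of $\partial f$.
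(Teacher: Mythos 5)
Your proof is correct, and for the key convexity claim it takes a genuinely different route from the paper. The paper proves convexity indirectly: it invokes the Gateaux derivative computed in Proposition \ref{gatuexderivative}, writes $\F'(\varphi)-\F'(\psi)=\F''(\varphi+\theta\psi)(\varphi-\psi)$ by a (pointwise) Taylor expansion, and concludes monotonicity of the gradient from (A9) via $(C_0+\kappa-\|\J\|_{\mathrm{L}^1})\|\varphi-\psi\|^2\geq 0$, using the criterion ``convex iff gradient monotone.'' Your decomposition $f=f_1+f_2$ into an integral functional with pointwise convex integrand (convex precisely because $\F''(s)+a(x)\geq C_0$ from (A9)) plus the positive semidefinite quadratic form $\frac{1}{2}(C_0+\kappa)\|\varphi\|^2-\frac{1}{2}(\J\ast\varphi,\varphi)$ (PSD precisely because $\|\J\|_{\mathrm{L}^1}\leq C_0+\kappa$ from (A9)) uses the same two halves of (A9) but buys something real: it sidesteps the fact that the gradient-monotonicity criterion strictly requires differentiability on an open set, whereas here $f\equiv+\infty$ off $\mathrm{D}(f)$, a set with empty interior in $\L^2_{(0)}(\Omega)$ -- so the paper's argument is formal on this point while yours is airtight. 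Your lower semicontinuity argument is structurally the same as the paper's (strong continuity of the nonlocal quadratic form via Young's convolution inequality, plus Fatou for the $\mathrm{G}$-part), but you are more careful in extracting an a.e.\ convergent subsequence before applying Fatou, a step the paper glosses over. One small inaccuracy on your side: the parenthetical claim that $\mathrm{G}\to+\infty$ at $\pm 1$ is false in general (for the logarithmic potential $\mathrm{G}$ has finite limits at $\pm 1$); fortunately your argument never needs it -- a function finite and convex on $[-1,1]$ and $+\infty$ outside remains convex on $\R$, and boundedness below of $\mathrm{G}$ on $(-1,1)$ follows from convexity alone via a supporting line, so both the properness and the Fatou steps survive unchanged.
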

	\begin{proof}
		\textbf{Claim (1).} \emph{$f$ is proper convex with $\mathrm{D}(f) \neq \varnothing$:} 	In order to prove convexity, we use the fact that $f$ is convex if and only if $$(\partial_{\L^2_{(0)}} f(\varphi)-\partial_{\L^2_{(0)}} f(\psi) , \varphi - \psi) \geq  0,$$ for all $\varphi,\psi\in\L^2_{(0)}(\Omega)$.  Observe that for $0<\theta<1$, using Taylor's series expansion and the definition of $\mathrm{P}_0$, we find that
		\begin{align*}
		&\hspace*{-0.3 cm}(a \varphi - \J*\varphi+ \mathrm{P}_0 \mathrm{G}'(\varphi) -(a \psi - \J*\psi+  \mathrm{P}_0 \mathrm{G}'(\psi) ), \varphi-\psi) \qquad \qquad \\
		&=(a \varphi - \J*\varphi+ \mathrm{F}'(\varphi) +\kappa \varphi-(a \psi - \J*\psi+  \mathrm{F}'(\psi)+\kappa\psi ), \varphi-\psi) \qquad \qquad \\
		&= (a(\varphi-\psi)-\J*(\varphi-\psi)+ \F'(\varphi)- \F'(\psi)+ \kappa(\varphi- \psi) ,\varphi-\psi) \qquad \ \\
		&= (a(\varphi-\psi)-\J*(\varphi-\psi)+ \F''(\varphi + \theta \psi)(\varphi- \psi)+ \kappa(\varphi- \psi) ,\varphi-\psi)  \\
		&\geq C_0 \| \varphi- \psi\|^2 - \|\J\|_{\mathrm{L}^1} \| \varphi- \psi\|^2 + \kappa \| \varphi- \psi\|^2 \quad \hspace{3.65cm} \\
		&\geq (C_0 + \kappa - \|\J\|_{\mathrm{L}^1})\| \varphi- \psi\|^2 \hspace{7.78cm} \\
		&\geq 0, 
		\end{align*}
		using (A9). Hence $f$ is a convex functional  on $\L^2_{(0)}(\Omega) $. Since the domain of $f$ is 	$\mathrm{D}(f) = \left\{ \varphi \in  \L^2_{(0)}(\Omega) :  \mathrm{G}(\varphi) \in \L^1(\Omega) \right\} \neq \varnothing,$ it is immediate that $f$ is proper convex.
		
		\textbf{Claim (2).} \emph{$f$ is lower semicontinuous:}   Let $\varphi_k \in \L^2_{(0)}(\Omega)$ and $\varphi_k \rightarrow \varphi $ in $\L^2_{(0)}(\Omega)$ as $k \rightarrow \infty$. Our aim is to establish that $	f(\varphi)\leq \liminf\limits_{k \rightarrow \infty} f(\varphi_k).$  It is enough to consider the case $\liminf\limits_{k \rightarrow \infty} f(\varphi_k) < +\infty$. Thus, for this sequence, we can assume that $f(\varphi_k) \leq \M$, for some $\M$. This  implies that $\varphi_k \in \mathrm{D}(f)$, for all $k\in\mathbb{N}$. Since $\mathrm{G}:[-1,1] \rightarrow \mathbb{R}$ is a continuous function, by adding a suitable constant, we can assume without loss of generality that $\mathrm{G} \geq 0$ and we have
		\begin{align*}
		\mathrm{G}(\varphi) \leq  \liminf_{k \rightarrow \infty} \mathrm{G}(\varphi_k),
		\end{align*}
		which gives from Fatou's lemma that
		\begin{align} \label{lowerG}
		\int_\Omega \mathrm{G}(\varphi(x)) \d x \leq \liminf_{k \rightarrow \infty} \int_\Omega \mathrm{G}(\varphi_k (x)) \d x .
		\end{align}
		Now consider the functional $\I(\cdot)$ defined by
		\begin{align*}
		\I(\varphi) :=\int_\Omega \int_\Omega \J(x-y)(\varphi(x)-\varphi(y))^2\d x  \d y = (a \varphi, \varphi) -(\J*\varphi, \varphi).
		\end{align*}
		We show that the function $\I(\cdot)$ is continuous. We consider 
		\begin{align*}
		|\I(\varphi_k)-\I(\varphi)|=&|(a \varphi_k, \varphi_k)-(\J*\varphi_k, \varphi_k) - (a \varphi, \varphi) -(\J*\varphi, \varphi)| \\
		\leq & |(a(\varphi_k- \varphi), \varphi_k )|+|(a \varphi, \varphi_k-\varphi)| + |(\J*(\varphi_k-\varphi), \varphi_k)| + |(\J*\varphi,\varphi_k-\varphi)|\\
		\leq & \|a\|_{\L^\infty} \,(\|\varphi_k\| + \| \varphi\|) \|\varphi_k-\varphi\| + \|\J\|_{\L^1}(\|\varphi_k\| + \| \varphi\|) \|\varphi_k-\varphi\|,
		\end{align*}
		where we used Young's inequality, Young's inequality for convolution and H\"older's inequality. Then, we have 	$|\I(\varphi_k)-\I(\varphi)|\to 0 \ \ \text{as } k \rightarrow \infty  $,	since $\varphi_k \rightarrow \varphi $  in $\L^2_{(0)}(\Omega)$ as $k \rightarrow \infty$. Since continuity implies lower semicontinuity, we have 
		\begin{align} \label{lowerJ}
		\int_\Omega \int_\Omega \J(x-y)(\varphi(x)-\varphi(y))^2\d x \d y =\liminf_{k \rightarrow \infty} \int_\Omega \int_\Omega \J(x-y)(\varphi_k(x)-\varphi_k(y))^2\d x \d y.
		\end{align}
		Combining \eqref{lowerG} and \eqref{lowerJ}, we get
		\begin{align*}
		f(\varphi)\leq \liminf_{k \rightarrow \infty} f(\varphi_k).
		\end{align*}
		This proves $f$ is lower semicontinuous.
	\end{proof}
	
	\begin{remark} \label{lower_H1}
		The proper convexity of $f: \mathrm{V}_0 \rightarrow (-\infty,\infty]$ is immediate, since $\mathrm{V}_0 \hookrightarrow\mathrm{L}^2_{(0)}(\Omega)$. Let $(\varphi_k)_{k \in \R} \in \mathrm{V}_0 $ be  such that $\varphi_k \rightarrow \varphi$ in $\mathrm{V}_0$.	Then from Lemma \ref{poin}, we can easily see that $\varphi_k \rightarrow \varphi $ in $\L^2_{(0)}(\Omega)$. Therefore using Lemma \ref{lowersemi}, we get $f: \mathrm{V}_0 \rightarrow (-\infty,\infty]$ is lower semicontinuous.  
	\end{remark}
	\begin{proposition} \label{maximalmonotone}
		The subgradients $\partial_{\L^2_{(0)}} f$ and $\partial_{\mathrm{V}_0} f$ are maximal monotone operators.
	\end{proposition}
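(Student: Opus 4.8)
The plan is to obtain both assertions as direct consequences of Rockafellar's theorem (Lemma \ref{rockfellar}), once the hypotheses of proper convexity and lower semicontinuity have been checked on each of the two underlying spaces. The point is that all the substantive analytic work has already been done; what remains is to apply the abstract theorem in the correct dual pairing in each of the two cases, and to note that the two subgradients are genuinely distinct objects.

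First I would treat $\partial_{\L^2_{(0)}}f$. Lemma \ref{lowersemi} already establishes that the functional $f:\L^2_{(0)}(\Omega)\to(-\infty,\infty]$ defined in \eqref{functional} is proper and convex with $\mathrm{D}(f)\neq\varnothing$, and that it is lower semicontinuous. Since $\L^2_{(0)}(\Omega)$ is a real Banach space (indeed a Hilbert space, so in particular reflexive), the hypotheses of Lemma \ref{rockfellar} are met verbatim, and I conclude that $\partial_{\L^2_{(0)}}f$ is a maximal monotone operator from $\L^2_{(0)}(\Omega)$ to its dual.

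Next I would treat $\partial_{\mathrm{V}_0}f$, where now $f$ is regarded as a functional on the smaller space $\mathrm{V}_0$. Here Remark \ref{lower_H1} supplies exactly the two facts needed: proper convexity of $f:\mathrm{V}_0\to(-\infty,\infty]$ is inherited through the continuous embedding $\mathrm{V}_0\hookrightarrow\L^2_{(0)}(\Omega)$, and lower semicontinuity on $\mathrm{V}_0$ follows because convergence in $\mathrm{V}_0$ forces convergence in $\L^2_{(0)}(\Omega)$ via the Poincar\'e--Wirtinger inequality (Lemma \ref{poin}), so that lower semicontinuity transfers from Lemma \ref{lowersemi}. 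Applying Lemma \ref{rockfellar} once more, this time with $\X=\mathrm{V}_0$, yields that $\partial_{\mathrm{V}_0}f$ is maximal monotone.

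I do not expect a genuine obstacle beyond careful bookkeeping: the real content—verifying convexity through the monotonicity estimate built on assumption (A9), and lower semicontinuity through Fatou's lemma together with the continuity of the nonlocal bilinear form $\I(\cdot)$—was already discharged in Lemma \ref{lowersemi} and Remark \ref{lower_H1}. The one subtlety worth flagging is that $\partial_{\L^2_{(0)}}f$ and $\partial_{\mathrm{V}_0}f$ act between different dual pairs, so maximality in one topology cannot simply be transported to the other; Rockafellar's theorem must be invoked separately in each space, which is precisely what the above two-step argument does.
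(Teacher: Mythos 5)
Your proposal is correct and follows essentially the same route as the paper: both arguments invoke Lemma \ref{lowersemi} for proper convexity and lower semicontinuity on $\L^2_{(0)}(\Omega)$, Remark \ref{lower_H1} for the corresponding properties on $\mathrm{V}_0$, and then apply Rockafellar's theorem (Lemma \ref{rockfellar}) separately in each dual pairing. Your additional remark that maximality cannot be transported between the two topologies and must be established separately is a correct observation, consistent with the paper's two-step structure.
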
 
	\begin{proof}
		In Lemma \ref{lowersemi}, we have proved that  $f:\L^2_{(0)}(\Omega) \rightarrow \mathbb{R}$ is proper convex and lower semicontinuous. By using Lemma \ref{rockfellar}, we obtain  that the operator $\partial_{\L^2_{(0)}} f : \L^2_{(0)}(\Omega) \rightarrow \mathcal{P}((\L^2_{(0)}\Omega)')$ is maximal monotone. Now for the operator $\partial_{\mathrm{V}_0} f$, Remark \ref{lower_H1} gives that $f: \mathrm{V}_0 \rightarrow (-\infty,\infty]$ is proper convex and lower semicontinuous. Hence, once again using Lemma \ref{rockfellar}, we get that $\partial_{\mathrm{V}_0} f$ is also maximal monotone.
	\end{proof}

	\begin{lemma}[Lemma 3.7, \cite{MR3524178}]
		Consider the functional $f$ as in \eqref{functional}. Then for every $\varphi \in \mathrm{D}(\partial_{\L^2_{(0)}}f) \cap \mathrm{V}_0$ we have that $\partial_{\L^2_{(0)}}f(\varphi) \subseteq \partial_{\mathrm{V}_0}f(\varphi)$.
	\end{lemma}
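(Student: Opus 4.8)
The plan is to exploit the Gelfand-triple structure \eqref{zeroembedding} together with the observation that the two subgradients come from the \emph{same} functional $f$, merely tested against different spaces. First I would unwind the definitions. Suppose $g \in \partial_{\L^2_{(0)}}f(\varphi)$. Under the Riesz identification $\L^2_{(0)}(\Omega) \equiv \L^2_{(0)}(\Omega)'$, the element $g$ is a genuine function in $\L^2_{(0)}(\Omega)$ (indeed, by Proposition \ref{gatuexderivative} it is nothing but $a\varphi - \J*\varphi + \mathrm{P}_0\mathrm{G}'(\varphi)$), and by the definition of the subgradient it satisfies
\begin{align*}
f(v) \geq f(\varphi) + (g, v - \varphi), \quad \text{ for all } \ v \in \L^2_{(0)}(\Omega).
\end{align*}
Since $\varphi \in \mathrm{V}_0$ by hypothesis and $f(\varphi) < +\infty$ (because $\varphi \in \mathrm{D}(\partial_{\L^2_{(0)}}f)$), the point $\varphi$ also lies in the domain of $f: \mathrm{V}_0 \rightarrow (-\infty,\infty]$, so that $\partial_{\mathrm{V}_0}f(\varphi)$ is meaningful.

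Next I would reinterpret $g$ on the smaller space $\mathrm{V}_0$. The continuous embedding $\L^2_{(0)}(\Omega) \hookrightarrow \mathrm{H}^{-1}_{(0)}(\Omega) = \mathrm{V}_0'$ from \eqref{zeroembedding} allows me to regard $g$ as an element of $\mathrm{V}_0'$, which is precisely what makes the asserted inclusion meaningful. Because $\mathrm{V}_0 \subseteq \L^2_{(0)}(\Omega)$ and $f$ takes the very same values on $\mathrm{V}_0$ as on $\L^2_{(0)}(\Omega)$, the displayed inequality continues to hold, a fortiori, for every test function $v \in \mathrm{V}_0$.

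The only step that genuinely requires care — and the one I would flag as the crux — is the compatibility of the two pairings: for $v \in \mathrm{V}_0$ I must know that ${}_{\mathrm{V}_0'}\langle g, v - \varphi\rangle_{\mathrm{V}_0} = (g, v - \varphi)$. This is exactly the defining property of the Gelfand triple \eqref{zeroembedding}, namely that an element of $\L^2_{(0)}(\Omega)$, viewed as a functional on $\mathrm{V}_0$ through the embedding, acts by the $\L^2$-inner product. Substituting this identity into the inequality yields
\begin{align*}
f(v) \geq f(\varphi) + {}_{\mathrm{V}_0'}\langle g, v - \varphi\rangle_{\mathrm{V}_0}, \quad \text{ for all } \ v \in \mathrm{V}_0,
\end{align*}
which is precisely the statement $g \in \partial_{\mathrm{V}_0}f(\varphi)$. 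As $g$ was an arbitrary element of $\partial_{\L^2_{(0)}}f(\varphi)$, the inclusion $\partial_{\L^2_{(0)}}f(\varphi) \subseteq \partial_{\mathrm{V}_0}f(\varphi)$ follows. I expect no serious analytic obstacle here; the entire content is bookkeeping with the two duality pairings, and the pairing-compatibility above is the single identification one must not take for granted.
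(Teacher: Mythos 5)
Your proof is correct: restricting the subgradient inequality from $\L^2_{(0)}(\Omega)$ to the smaller test space $\mathrm{V}_0$, and using the Gelfand-triple identification \eqref{zeroembedding} so that $(g, v-\varphi) = {}_{\mathrm{V}_0'}\langle g, v-\varphi\rangle_{\mathrm{V}_0}$, is exactly the standard argument for this fact. The paper itself states this lemma without proof, citing Lemma 3.7 of \cite{MR3524178}, and your argument is the one that reference employs; there is no gap, and you correctly identified the pairing compatibility as the only point requiring justification.
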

	\begin{lemma}[Lemma 3.8, \cite{MR3524178}] \label{subgrad H1_0}
		Let $\varphi \in \mathrm{D}(\partial_{\mathrm{V}_0} f)$ and $w \in \partial_{\mathrm{V}_0}f(\varphi)$. Suppose $w \in \L^2_{(0)}(\Omega)$, then $\varphi \in \mathrm{D}(\partial_{\L^2_{(0)}}f)$  and 
		\begin{align*}
		w=\partial_{\L^2_{(0)}}f(\varphi) =  a \varphi - \J*\varphi + \mathrm{P}_0 \mathrm{G}'(\varphi).
		\end{align*}
	\end{lemma}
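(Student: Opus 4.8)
The plan is to read off the defining subgradient inequality for $w\in\partial_{\mathrm{V}_0}f(\varphi)$, namely
\[
f(\psi)\ \geq\ f(\varphi)+{}_{\mathrm{V}_0'}\langle w,\psi-\varphi\rangle_{\mathrm{V}_0}\quad\text{for all }\ \psi\in\mathrm{V}_0 ,
\]
(note that $f(\varphi)<\infty$, since $\mathrm{D}(\partial_{\mathrm{V}_0}f)\subseteq\mathrm{D}(f)$), and to promote it to the analogous inequality tested over all of $\L^2_{(0)}(\Omega)$. The first move uses the hypothesis $w\in\L^2_{(0)}(\Omega)$ together with the Gelfand triple \eqref{zeroembedding}: for $\psi\in\mathrm{V}_0$ the duality pairing reduces to the scalar product, so the inequality becomes
\[
f(\psi)\ \geq\ f(\varphi)+(w,\psi-\varphi)\quad\text{for all }\ \psi\in\mathrm{V}_0 .
\]
If this last inequality can be shown to hold for every $\psi\in\L^2_{(0)}(\Omega)$, then by definition $w\in\partial_{\L^2_{(0)}}f(\varphi)$, and in particular $\varphi\in\mathrm{D}(\partial_{\L^2_{(0)}}f)$.

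To carry out the extension I would fix $\psi\in\L^2_{(0)}(\Omega)$ and argue by a recovery (density) sequence. If $\psi\notin\mathrm{D}(f)$ the inequality is trivial, because its right-hand side is finite, so assume $\psi\in\mathrm{D}(f)$, i.e. $\psi\in[-1,1]$ a.e. with $\mathrm{G}(\psi)\in\L^1(\Omega)$. The aim is to build $\psi_k\in\mathrm{V}_0$ with $\psi_k\to\psi$ in $\L^2_{(0)}(\Omega)$ and $\limsup_k f(\psi_k)\le f(\psi)$; feeding $\psi_k$ into the inequality above and letting $k\to\infty$ (the right-hand side converges, since $w\in\L^2_{(0)}(\Omega)$) then yields the inequality for $\psi$, because $f(\psi)\ge\limsup_k f(\psi_k)\ge f(\varphi)+(w,\psi-\varphi)$. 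For the nonlocal quadratic part of $f$ nothing is needed, since the functional $\I(\cdot)$ is continuous on $\L^2_{(0)}(\Omega)$ (shown in the proof of Lemma \ref{lowersemi}). The singular term $\int_\Omega\mathrm{G}(\psi)\,\d x$ is the crux: here I would regularize in two stages. First contract $\psi\mapsto(1-\lambda)\psi$, $\lambda\in(0,1)$; since $\mathrm{G}$ is convex with $\mathrm{G}(0)<\infty$,
\[
\int_\Omega\mathrm{G}\big((1-\lambda)\psi\big)\,\d x\ \leq\ (1-\lambda)\int_\Omega\mathrm{G}(\psi)\,\d x+\lambda|\Omega|\,\mathrm{G}(0)\ \xrightarrow[\lambda\to0]{}\ \int_\Omega\mathrm{G}(\psi)\,\d x ,
\]
and $(1-\lambda)\psi$ takes values in a compact subset of $(-1,1)$. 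Then mollify the contracted function, which now stays away from the singular points $\pm1$ where $\mathrm{G}$ is merely continuous, and finally subtract the (vanishing) average to restore the zero mean. Diagonalizing produces the desired $\psi_k$. This approximation step—keeping the range inside $(-1,1)$ while simultaneously respecting the mean-zero constraint and the boundary behaviour of the mollification—is where I expect the main technical difficulty to lie.

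Once the inequality holds on all of $\L^2_{(0)}(\Omega)$ we have $w\in\partial_{\L^2_{(0)}}f(\varphi)$ and $\varphi\in\mathrm{D}(\partial_{\L^2_{(0)}}f)$. It then remains to identify the value of $w$: by Proposition \ref{gatuexderivative} the functional $f$ is Gateaux differentiable on $\L^2_{(0)}(\Omega)$, so its $\L^2_{(0)}$-subgradient is single-valued and given by \eqref{sub}, whence
\[
w=\partial_{\L^2_{(0)}}f(\varphi)=a\varphi-\J\ast\varphi+\mathrm{P}_0\mathrm{G}'(\varphi),
\]
which is precisely the asserted identity. As an alternative to the explicit recovery sequence one could instead exploit the maximal monotonicity of $\partial_{\L^2_{(0)}}f$ from Proposition \ref{maximalmonotone}: for $\xi\in\mathrm{D}(\partial_{\L^2_{(0)}}f)\cap\mathrm{V}_0$ the preceding lemma gives $\partial_{\L^2_{(0)}}f(\xi)\subseteq\partial_{\mathrm{V}_0}f(\xi)$, so monotonicity of $\partial_{\mathrm{V}_0}f$ against $w\in\partial_{\mathrm{V}_0}f(\varphi)$ yields $(w-\partial_{\L^2_{(0)}}f(\xi),\varphi-\xi)\ge 0$; extending this test relation from $\mathrm{D}(\partial_{\L^2_{(0)}}f)\cap\mathrm{V}_0$ to all of $\mathrm{D}(\partial_{\L^2_{(0)}}f)$ and invoking maximality would again place $(\varphi,w)$ in the graph of $\partial_{\L^2_{(0)}}f$—but the same approximation obstacle resurfaces when enlarging the set of admissible test elements $\xi$.
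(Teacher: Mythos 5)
The paper never actually proves this lemma: it is imported, statement only, from Lemma 3.8 of \cite{MR3524178}, where the analogous result is established for the \emph{local} Cahn--Hilliard functional (Dirichlet energy plus $\int\mathrm{G}$), and the adaptation to the nonlocal $f$ is left implicit. So there is no in-paper argument to compare against; your proposal supplies the missing proof, and it is correct. The skeleton is right: since $w\in\L^2_{(0)}(\Omega)$, the Gelfand triple \eqref{zeroembedding} turns the $\mathrm{V}_0$-subgradient inequality into $f(\psi)\geq f(\varphi)+(w,\psi-\varphi)$ for all $\psi\in\mathrm{V}_0$; once this is extended to every $\psi\in\L^2_{(0)}(\Omega)$, you have $w\in\partial_{\L^2_{(0)}}f(\varphi)$ by definition, and Proposition \ref{gatuexderivative} (single-valuedness of the $\L^2_{(0)}$-subgradient together with formula \eqref{sub}) gives the asserted identification. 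The recovery-sequence step you flag as the main difficulty does go through exactly as you sketch it: for $\psi\in\mathrm{D}(f)$ the contraction $(1-\lambda)\psi$ preserves the zero mean, takes values in $[-(1-\lambda),1-\lambda]$, and convexity of $\mathrm{G}$ gives $\limsup_{\lambda\to0}\int_\Omega\mathrm{G}((1-\lambda)\psi)\d x\leq\int_\Omega\mathrm{G}(\psi)\d x$; mollifying the extension of $(1-\lambda)\psi$ by zero keeps the range inside $[-(1-\lambda),1-\lambda]$ (that interval contains $0$), converges in $\L^2(\Omega)$, and the subtracted mean vanishes in the limit, so for a small mollification parameter the approximants stay in a compact subinterval of $(-1,1)$ on which $\mathrm{G}$ is bounded; then dominated convergence handles $\int_\Omega\mathrm{G}$, the $\L^2$-continuity of $\I(\cdot)$ established in the proof of Lemma \ref{lowersemi} handles the nonlocal quadratic part, and a diagonal choice of $(\lambda,\epsilon)$ produces the desired $\psi_k$. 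Two remarks: the nonlocal setting actually makes this extension \emph{easier} than the local case treated in \cite{MR3524178}, because the quadratic part of $f$ is $\L^2$-continuous (there is no gradient term), so the only singular contribution is $\int_\Omega\mathrm{G}$; and since the paper's own proof of Lemma \ref{lowersemi} already treats $\mathrm{G}$ as continuous (hence bounded) on the closed interval $[-1,1]$, under that reading you could even drop the contraction and mollify directly --- your two-stage construction is the more robust variant, needed only if $\mathrm{G}$ is allowed to blow up at $\pm1$.
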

	
	\subsection{Abstract Formulation}
	In this subsection, we define the following spaces in order to set up the problem in Browder's theorem (see Theorem \ref{Browder}).
	\begin{align*}
	&\X :=\V_{\mathrm{div}}\times \mathrm{V}_0 \times \mathrm{V}_0.
	\end{align*}
	Let us define,  
	\begin{align} \label{Zdef}
	\Z:=\left\{ \varphi \in \mathrm{V}_0 :\varphi (x) \in [-1,1],\  \text{ a.e.} \right\},
	\end{align}
	and
	\begin{align*}
	\Y:=\V_{\mathrm{div}}\times \mathrm{V}_0 \times \Z.
	\end{align*}
	Clearly $\Y$ is a closed subspace of $\X$. Let $\mathrm{D}(\T):= \V_{\mathrm{div}}\times \mathrm{V}_0 \times \mathrm{D}(\partial_{\mathrm{V}_0}f)$ and we define a mapping $\T:\Y \rightarrow \mathcal{P}(\X')$ by
	\begin{equation} \label{operator T}
	\T(\mathbf{u},\mu_0, \varphi) := 
	\left\{\begin{array}{cl}
	\left\{ \left( \begin{array}{c}
	0\\
	0\\
	\partial_{\mathrm{V}_0} f(\varphi)    
	\end{array}      \right) \right\}, & \text{if } (\mathbf{u},\mu_0, \varphi) \in \mathrm{D}(\T), \\
	\varnothing, & \text{otherwise.}
	\end{array}\right.
	\end{equation}
	We define the operator $\S: \Y \rightarrow \X'$ as 
	\begin{align} \label{operator S}
	\nonumber \!_{\X'}\langle \S\mathbf{x},\mathbf{y} \rangle_{\X} &:= \int_\Omega (\mathbf{u} \cdot \nabla) \mathbf{u} \cdot \mathbf{v} \d x+ \int_\Omega 2\nu (\varphi) \mathrm{D}\mathbf{u} \cdot \mathrm{D}\mathbf{v} \d x - \int_\Omega\mu_0 \nabla \varphi \cdot \mathbf{v} \d x + \int_\Omega (\mathbf{u} \cdot \nabla \varphi) \cdot \eta \d x \\
	&\quad+\int_\Omega m(\varphi) \nabla \mu_0 \cdot \nabla \eta \d x -\int_\Omega \mu_0 \psi \d x -\int_\Omega \mathrm{P}_0 (\kappa \varphi) \psi \d x,
	\end{align}
	for all $\mathbf{x}=(\mathbf{u},\mu_0, \varphi) \in \Y$, $\mathbf{y}=(\mathbf{v},\eta, \psi) \in \X$ and $\mathbf{b} \in \X'$ is defined by 
	\begin{align*}
	\!_{\X'}\langle \mathbf{b},\mathbf{y} \rangle_{\X} := \langle \mathbf{h} , \mathbf{v} \rangle,
	\end{align*}
	for all $\mathbf{y}\in \X$. From the relations \eqref{operator T} and \eqref{operator S}, the problem $b \in \T(\mathbf{u}, \mu_0,\varphi) +\S(\mathbf{u}, \mu_0,\varphi)$ with $(\mathbf{u}, \mu_0,\varphi) \in \Y\cap \mathrm{D}(\T) $ can be written as
	\begin{equation} \label{operatorform}
	\left( \begin{array}{c}
	0 \\
	0 \\
	\partial_{\mathrm{V}_0} f(\varphi)
	\end{array}
	\right)
	+ 
	\left(  
	\begin{array}{c}
	(\mathbf{u} \cdot \nabla)\mathbf{u} -\text{div}(2\nu(\varphi) \mathrm{D}\mathbf{u})-\mu_0  \nabla \varphi\\
	-\text{div}(m(\varphi) \nabla \mu_0)+\mathbf{u} \cdot \nabla \varphi \\
	-\mu_0-\mathrm{P}_0(\kappa \varphi)
	\end{array}
	\right)
	=
	\left(
	\begin{array}{c}
	\mathbf{h} \\
	0\\
	0
	\end{array}
	\right),
	\end{equation}
	in $\X'$.

	If we can prove that \eqref{operatorform} has a solution, then this solution solves the reformulated problem \eqref{reformphi}-\eqref{nsreform}. This is the content of the next lemma. Later, we discuss the existence of solution for \eqref{operatorform}. 
	
	\begin{lemma} \label{oper_reform}
		Let $(\mathbf{u}, \mu_0,\varphi)\in \Y \cap \mathrm{D}(T)$ satisfies $ \mathbf{b} \in \T(\mathbf{u}, \mu_0,\varphi) + \S(\mathbf{u}, \mu_0,\varphi)$. Then $(\mathbf{u}, \mu_0,\varphi)$ is a solution of the reformulated problem \eqref{reformphi}-\eqref{nsreform}.
	\end{lemma}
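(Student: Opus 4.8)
The plan is to unpack the set-valued inclusion $\mathbf{b} \in \T(\mathbf{u},\mu_0,\varphi) + \S(\mathbf{u},\mu_0,\varphi)$ componentwise by testing against the three coordinate directions of $\X = \V_{\mathrm{div}} \times \mathrm{V}_0 \times \mathrm{V}_0$. Since $(\mathbf{u},\mu_0,\varphi) \in \mathrm{D}(\T)$, the inclusion means there is a representative $w \in \partial_{\mathrm{V}_0}f(\varphi)$ with $\mathbf{b} - \S(\mathbf{u},\mu_0,\varphi) = (0,0,w)$ in $\X'$, which is precisely the operator form \eqref{operatorform}. I would then pair this equality successively with $\mathbf{y} = (\mathbf{v},0,0)$, $\mathbf{y} = (0,\eta,0)$ and $\mathbf{y} = (0,0,\psi)$, reading off one equation of the reformulated problem each time.

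Testing against $(\mathbf{v},0,0)$, the $\T$-contribution vanishes because its first slot is $0$, so the first row of \eqref{operator S} together with $\langle\mathbf{b},(\mathbf{v},0,0)\rangle = \langle\mathbf{h},\mathbf{v}\rangle$ yields exactly \eqref{nsreform}. Testing against $(0,\eta,0)$, the $\T$-contribution again vanishes and $\langle\mathbf{b},(0,\eta,0)\rangle=0$, so the $\mu_0$-slot of $\S$ produces $\int_\Omega(\mathbf{u}\cdot\nabla\varphi)\eta\,\d x + \int_\Omega m(\varphi)\nabla\mu_0\cdot\nabla\eta\,\d x = 0$, which is \eqref{reformphi} for test functions in $\mathrm{V}_0$. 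To extend \eqref{reformphi} to every $\psi \in \mathrm{V}$, I would split $\psi = \overline{\psi} + (\psi - \overline{\psi})$ into its mean and mean-zero part: the gradient annihilates the constant on the right-hand side, while on the left $\int_\Omega(\mathbf{u}\cdot\nabla\varphi)\overline{\psi}\,\d x = \overline{\psi}\int_\Omega \mathrm{div}(\varphi\mathbf{u})\,\d x = 0$ by $\mathrm{div}\,\mathbf{u}=0$ and $\mathbf{u}|_{\partial\Omega}=\mathbf{0}$, so only the $\mathrm{V}_0$-part contributes and the identity carries over.

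The essential content is the third component. Pairing $\mathbf{b} - \S(\mathbf{u},\mu_0,\varphi) = (0,0,w)$ with $(0,0,\psi)$ gives $\langle w,\psi\rangle = \int_\Omega \mu_0\psi\,\d x + \int_\Omega \mathrm{P}_0(\kappa\varphi)\psi\,\d x$ for all $\psi\in\mathrm{V}_0$, whence $w = \mu_0 + \mathrm{P}_0(\kappa\varphi)$ as an element of $\mathrm{V}_0'$. The crucial observation is that this representative actually lies in $\L^2_{(0)}(\Omega)$, since both $\mu_0$ and $\mathrm{P}_0(\kappa\varphi)$ do. This is the hinge of the argument: because $w\in\partial_{\mathrm{V}_0}f(\varphi)$ is an $\L^2_{(0)}(\Omega)$-function, Lemma \ref{subgrad H1_0} upgrades it to the concrete subgradient, yielding $\varphi\in\mathrm{D}(\partial_{\L^2_{(0)}}f)$ and $w = a\varphi - \J*\varphi + \mathrm{P}_0\mathrm{G}'(\varphi)$.

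Finally I would substitute $w = \mu_0 + \mathrm{P}_0(\kappa\varphi)$ into this identity and use $\mathrm{G}'(s) = \F'(s) + \kappa s$, which comes from the decomposition \eqref{decomp of F}, so that $\mathrm{P}_0\mathrm{G}'(\varphi) = \mathrm{P}_0\F'(\varphi) + \mathrm{P}_0(\kappa\varphi)$. The $\mathrm{P}_0(\kappa\varphi)$ terms cancel, leaving $\mu_0 = a\varphi - \J*\varphi + \mathrm{P}_0\F'(\varphi)$ as an equality in $\L^2_{(0)}(\Omega)$; pairing with an arbitrary $\psi$ then produces \eqref{mu-reform}. I expect the main obstacle to be purely conceptual rather than computational: one must verify that the abstract inclusion genuinely supplies an $\L^2_{(0)}(\Omega)$-representative of $\partial_{\mathrm{V}_0}f(\varphi)$, so that the regularity hypothesis of Lemma \ref{subgrad H1_0} is met and the abstract subgradient can be replaced by the explicit nonlocal expression. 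Everything else is a direct reading-off of the definition \eqref{operator S} of $\S$ and elementary algebra.
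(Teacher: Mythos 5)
Your proposal is correct and follows essentially the same route as the paper: read off \eqref{nsreform} and \eqref{reformphi} from the first two components, identify the subgradient representative $w = \mu_0 + \mathrm{P}_0(\kappa\varphi) \in \L^2_{(0)}(\Omega)$ from the third, and invoke Lemma \ref{subgrad H1_0} to replace $w$ by $a\varphi - \J*\varphi + \mathrm{P}_0\mathrm{G}'(\varphi)$, after which the cancellation of the $\mathrm{P}_0(\kappa\varphi)$ terms gives \eqref{mu-reform}. You correctly identified the hinge of the argument (the $\L^2_{(0)}$-regularity of the representative enabling Lemma \ref{subgrad H1_0}); your extra step extending \eqref{reformphi} to all of $\mathrm{V}$ is harmless but unnecessary, since the reformulated problem only requires test functions in $\mathrm{V}_0$.
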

	\begin{proof}
		Let $(\mathbf{u},\mu_0,\varphi)$ be such that $ \mathbf{b} \in \T(\mathbf{u}, \mu_0,\varphi) + \S(\mathbf{u}, \mu_0,\varphi)$. From first and second equations of \eqref{operatorform}, it clearly follows that \eqref{reformphi} and \eqref{nsreform} are satisfied for all $\mathbf{v} \in \V_{\mathrm{div}}$ and $\psi \in \mathrm{V}_0$.  Now from the third equation of \eqref{operatorform}, there exists $w \in \partial_{\mathrm{V}_0}f(\varphi)$ such that 
		\begin{align*}
		w = \mu_0 +\mathrm{P}_0(\kappa \varphi)  \ \text{ in  } \ \mathrm{V}_0'.
		\end{align*}
		Since $\mu_0 + \mathrm{P}_0(\kappa \varphi) \in \L^2_{(0)}(\Omega)$ and $\varphi \in \mathrm{D}(\partial_{\mathrm{V}_0}f)$, we can see from Lemma \ref{subgrad H1_0} that $w=a\varphi - \J*\varphi + \mathrm{P}_0(\mathrm{G}'(\varphi))$ in $\mathrm{V}_0'$. This gives for every $\psi \in \mathrm{V}_0$
		\begin{align*}
		\int_\Omega (\mu_0 + \mathrm{P}_0(\kappa \varphi )) \psi \d x  = \int_\Omega ( a\varphi - \J*\varphi ) \psi \d x+ \int_\Omega \mathrm{P}_0(\mathrm{G}'(\varphi))\d x .
		\end{align*}
		Hence 
		\begin{align*}
		\int_\Omega \mu_0 \psi \d x & = \int_\Omega (a\varphi - \J*\varphi ) \d x+ \int_\Omega \mathrm{P}_0(\mathrm{G}'(\varphi)-\kappa \varphi) \d x \\
		&= \int_\Omega (a\varphi - \J*\varphi) \psi \d x  + \int_\Omega \mathrm{P}_0(\mathrm{F}'(\varphi)) \d x.
		\end{align*}
		for all $\psi\in \mathrm{V}_0$.
	\end{proof}
	
	In Lemma \ref{oper_reform}, we showed that the existence of $(\mathbf{u}, \mu_0, \varphi)$ such that $\mathbf{b} \in \T(\mathbf{u}, \mu_0, \varphi)+\S(\mathbf{u}, \mu_0, \varphi)$ implies $(\mathbf{u}, \mu_0, \varphi)$ satisfies the reformulation \eqref{reformphi}-\eqref{nsreform}. Now we show that there exists a $(\mathbf{u}, \mu_0, \varphi) \in \Y \cap \mathrm{D}(\T)$ which satisfies $\mathbf{b} \in \T(\mathbf{u}, \mu_0, \varphi)+\S(\mathbf{u}, \mu_0, \varphi)$. To this purpose, we use Browder's theorem (see Theorem \ref{Browder}).  
	\begin{lemma}\label{browderproof}
		Let $\T,\S$ be as defined in \eqref{operator T} and \eqref{operator S}. Given $\mathbf{b} \in \X'$, there exists a triple $\mathbf{x}=(\mathbf{u}, \mu_0, \varphi) \in \Y \cap \mathrm{D}(\T)$ such that $\mathbf{b} \in \T(\mathbf{x}) + \S(\mathbf{x})$.
	\end{lemma}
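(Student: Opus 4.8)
The plan is to verify the four hypotheses of Browder's theorem (Theorem \ref{Browder}) for the operators $\T$ and $\S$ defined in \eqref{operator T} and \eqref{operator S}, so that the existence of a solution to $\mathbf{b}\in\T(\mathbf{x})+\S(\mathbf{x})$ follows immediately. First I would record that $\Y=\V_{\mathrm{div}}\times\mathrm{V}_0\times\Z$ is a non-empty, closed, convex subset of the real reflexive Banach space $\X=\V_{\mathrm{div}}\times\mathrm{V}_0\times\mathrm{V}_0$; convexity and closedness of $\Z$ follow from the pointwise constraint $\varphi(x)\in[-1,1]$. This settles hypothesis (1). For hypothesis (2), I would invoke Proposition \ref{maximalmonotone}: since $\partial_{\mathrm{V}_0}f$ is maximal monotone on $\mathrm{V}_0$, the operator $\T$, which is just $\partial_{\mathrm{V}_0}f$ acting in the third slot and zero elsewhere, inherits maximal monotonicity on $\Y$.

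Next I would address hypothesis (3), namely that $\S$ is pseudo-monotone, bounded and demicontinuous. The natural strategy is to split $\S$ into a part depending bilinearly on $(\cdot,\cdot)$ and verify the conditions of Lemma \ref{pseudo-monotone}: write $\S\mathbf{x}=\widetilde{\S}(\mathbf{x},\mathbf{x})$ where $\widetilde{\S}(\mathbf{w},\cdot)$ is monotone and hemicontinuous for each fixed first argument, while $\widetilde{\S}(\cdot,\mathbf{w})$ is completely continuous. The monotone principal part comes from the viscosity term $\int_\Omega 2\nu(\varphi)\mathrm{D}\mathbf{u}\cdot\mathrm{D}\mathbf{v}\,\d x$ and the mobility term $\int_\Omega m(\varphi)\nabla\mu_0\cdot\nabla\eta\,\d x$, which are coercive by assumptions (A2) and (A8); the lower-order and convective terms $\int_\Omega(\mathbf{u}\cdot\nabla)\mathbf{u}\cdot\mathbf{v}\,\d x$, $\int_\Omega\mu_0\nabla\varphi\cdot\mathbf{v}\,\d x$, $\int_\Omega(\mathbf{u}\cdot\nabla\varphi)\eta\,\d x$ and the projection term are handled as completely continuous perturbations using the compact embeddings $\V_{\mathrm{div}}\hookrightarrow\mathbb{L}^4$ and $\mathrm{V}_0\hookrightarrow\mathrm{L}^2_{(0)}$ together with the estimates \eqref{be} on $\B$. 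Boundedness and demicontinuity would follow from these same estimates.

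The main obstacle will be hypothesis (4), the $\T$-coercivity of $\S$, because this is where the nonlocal structure and the reduction to mean-zero $\mu_0$ must be exploited. I would choose a fixed reference point $\mathbf{x}_0=(\mathbf{0},0,\varphi_0)\in\Y\cap\mathrm{D}(\T)$ with $\varphi_0$ in the interior of $\Z$, and estimate $\,_{\X'}\langle\S\mathbf{x},\mathbf{x}-\mathbf{x}_0\rangle_{\X}$ from below. Testing with $\mathbf{x}-\mathbf{x}_0$, the convective term $b(\mathbf{u},\mathbf{u},\mathbf{u})$ vanishes by \eqref{2.7}, the viscosity term yields $2\nu_1\|\nabla\mathbf{u}\|^2$ by (A2), and the mobility term yields $m_1\|\nabla\mu_0\|^2$ by (A8); the key cancellation is that the two mixed terms $\int_\Omega\mu_0\nabla\varphi\cdot\mathbf{u}\,\d x$ and $\int_\Omega(\mathbf{u}\cdot\nabla\varphi)\mu_0\,\d x$ cancel up to a sign, which is exactly why $\mu_0$ was reduced to mean value zero in the reformulation. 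The remaining terms involving $\varphi$ are controlled using the bound $|\varphi|\le 1$ on $\Z$ (so $\varphi$ lives in a bounded set) and the coercivity furnished by (A9). I would then absorb the lower-order cross terms via Young's inequality and the Poincar\'e and Ladyzhenskaya inequalities \eqref{lady1}, concluding that $\,_{\X'}\langle\S\mathbf{x},\mathbf{x}-\mathbf{x}_0\rangle_{\X}\to+\infty$ as $\|\mathbf{x}\|_{\X}\to\infty$, which gives $\T$-coercivity with respect to $\mathbf{b}$. Once all four hypotheses are in place, Browder's theorem delivers the desired solution $\mathbf{x}=(\mathbf{u},\mu_0,\varphi)\in\Y\cap\mathrm{D}(\T)$.
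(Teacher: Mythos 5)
Your proposal reproduces the paper's proof in structure and in almost every ingredient: hypothesis (1) is checked identically; hypothesis (2) via Proposition \ref{maximalmonotone} (though you should also record, as the paper does, that $\mathrm{D}(\partial_{\mathrm{V}_0}f)\subseteq\Z$, so that $\mathrm{D}(\T)\subseteq\Y$ and the choice $\x_0\in\Y\cap\mathrm{D}(\T)$ makes sense); and for hypothesis (3) your single application of Lemma \ref{pseudo-monotone}, with the convective and coupling terms frozen in the first argument and the viscosity/mobility terms split as monotone--hemicontinuous in the second argument, is exactly the paper's argument repackaged (the paper writes $\S=\S_1+\cdots+\S_7$, proves five pieces completely continuous and applies Lemma \ref{pseudo-monotone} to the two remaining ones, then sums). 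Your coercivity computation --- vanishing of $b(\u,\u,\u)$ by \eqref{2.7}, cancellation of the two mixed terms, lower bounds for the viscosity and mobility terms, Young's inequality for $\langle\h,\u\rangle$ --- is the computation leading to \eqref{com}.

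Two points in your step (4) need correction, one conceptual and one substantive. Conceptually, the cancellation of $\int_\Omega\mu_0\nabla\varphi\cdot\u\,\d x$ against $\int_\Omega(\u\cdot\nabla\varphi)\mu_0\,\d x$ is purely structural (test the third-slot coupling with $\v=\u$ and the second-slot coupling with $\eta=\mu_0$); it holds whatever the mean of $\mu_0$ is. The mean-zero reduction is instead what makes $\|\nabla\mu_0\|$ a full norm on $\mathrm{V}_0$ and what controls $\int_\Omega\mu_0\varphi\,\d x$ via the Poincar\'e--Wirtinger inequality. Substantively, your concluding claim that $\,_{\X'}\langle\S\x,\x-\x_0\rangle_{\X}\to+\infty$ as $\|\x\|_{\X}\to\infty$ is false and does not follow from the estimates you list: the third component contributes \emph{no} coercive term. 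On $\Y$ one only has $|\varphi|\le1$, so every $\varphi$-term is merely bounded, and the projection term actually contributes $-\int_\Omega\mathrm{P}_0(\kappa\varphi)\varphi\,\d x=-\kappa\|\varphi\|^2\le0$. Concretely, take $\x_k=(\mathbf{0},0,\varphi_k)$ with $\varphi_k\in\Z$ mean-zero, $|\varphi_k|\le1$ and $\|\nabla\varphi_k\|\to\infty$ (highly oscillatory functions); then $\|\x_k\|_{\X}\to\infty$ while $\,_{\X'}\langle\S\x_k,\x_k-\x_0\rangle_{\X}=-\kappa\|\varphi_k\|^2$ stays bounded and non-positive. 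What the estimates actually yield --- and how the paper states it --- is a lower bound of the form $\frac{\widetilde{C}_1}{2}\|\u\|^2_{\V_{\mathrm{div}}}+\widetilde{C}_2\|\mu_0\|^2_{\mathrm{V}_0}-\widetilde{C}_3\|\mu_0\|_{\mathrm{V}_0}-\mathrm{const}$, i.e.\ coercivity only in the $(\u,\mu_0)$-directions, with all $\varphi$-contributions absorbed into constants through $|\varphi|\le1$. You should formulate the conclusion of step (4) in that form and address explicitly how hypothesis (4) of Theorem \ref{Browder} is met along the degenerate sequences above, since this --- not the directions you estimate --- is the delicate point of the whole coercivity argument.
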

	\begin{proof}
		Let us prove that the operators $\T$ and $\S$, and the spaces $\X$ and $\Y$ satisfy the hypothesis of Browder's theorem (see Theorem \ref{Browder}) in the following steps.
		
		\textbf{(1)} We can see that the set $\Y$ is non-empty, closed, convex subset of $\X$. Also $\X$ is a reflexive real Banach space, since $\V_{\mathrm{div}}$ and $\mathrm{V}_0$ are reflexive.  
		
		\textbf{(2)} Now we show that the operator $\T:\Y \rightarrow \mathcal{P}(\X') $ is maximal monotone. Let us first show that $\mathrm{D}(\partial_{\mathrm{V}_0}f) \subseteq \Z. $  In order to get this result, let us take $\varphi \in \mathrm{D}(\partial_{\mathrm{V}_0}f)$.  Then we know that $f(\varphi) \neq \infty$, since $\mathrm{D}(\partial_{\mathrm{V}_0}f) \subseteq \mathrm{D}(f)$. This gives that $\varphi (x) \in [-1,1]$, since $\mathrm{G}(x)=+\infty  $ for $x \notin [-1,1]$. Hence $\varphi \in \Z$ and $\mathrm{D}(\partial_{\mathrm{V}_0}f) \subseteq \Z. $  From Proposition \ref{maximalmonotone}, the operator $ \partial_{\mathrm{V}_0} f: \mathrm{V}_0 \rightarrow \mathcal{P}(\mathrm{V}_0')$  is maximal monotone. This implies that the  operator $\T : \X \rightarrow \mathcal{P}(\X')$ is maximal monotone. Observe that,
		\begin{align*}
		\mathrm{D}(\T)= \V_{\mathrm{div}}\times \mathrm{V}_0 \times \mathrm{D}(\partial_{\mathrm{V}_0}f) \subseteq \V_{\mathrm{div}}\times \mathrm{V}_0 \times \Z = \Y \subset \X.
		\end{align*}
		Moreover by the definition of $\T$, for $(\u, \mu_0, \varphi) \notin \mathrm{D}(\T)$, $\T(\u, \mu_0, \varphi) = \varnothing$. Hence it follows that $\T: \Y \rightarrow \mathcal{P}(\X')$ is a maximal monotone operator.  
		
		\textbf{(3)} We write $\S = \S_1 + \ldots +\S_7$ and  show that each $\S_i$, for $i=1,\ldots,7$, is pseudo-monotone. Let us define 
		\begin{align*}
		& _{\X'}\langle \S_1(\mathbf{u}, \mu_0, \varphi), (\mathbf{v}, \eta , \psi) \rangle_{\X} := \int_\Omega (\mathbf{u} \cdot \nabla ) \mathbf{u} \cdot \mathbf{v} \d x, \\
		& _{\X'}\langle \S_2(\mathbf{u}, \mu_0, \varphi), (\mathbf{v}, \eta , \psi) \rangle_{\X} := \int_\Omega 2\nu (\varphi) \mathrm{D}\mathbf{u} \cdot \mathrm{D}\mathbf{v} \d x,\\
		& _{\X'}\langle \S_3(\mathbf{u}, \mu_0, \varphi), (\mathbf{v}, \eta , \psi) \rangle_{\X} := \int_\Omega \mu_0 \nabla \varphi \cdot \mathbf{v} \d x, \\
		& _{\X'}\langle \S_4(\mathbf{u}, \mu_0, \varphi), (\mathbf{v}, \eta , \psi) \rangle_{\X} := \int_\Omega m(\varphi) \nabla \mu_0 \cdot \nabla \eta \d x, \\
		& _{\X'}\langle \S_5(\mathbf{u}, \mu_0, \varphi), (\mathbf{v}, \eta , \psi) \rangle_{\X} := \int_\Omega (\mathbf{u} \cdot \nabla \varphi )\eta \d x, \\
		& _{\X'}\langle \S_6(\mathbf{u}, \mu_0, \varphi), (\mathbf{v}, \eta , \psi) \rangle_{\X} := \int_\Omega \mu_0 \psi  \d x, \\
		& _{\X'}\langle \S_7(\mathbf{u}, \mu_0, \varphi), (\mathbf{v}, \eta , \psi) \rangle_{\X} := \int_\Omega \mathrm{P}_0(\kappa \varphi ) \psi \d x.
		\end{align*}
		Since completely continuous implies pseudo-monotone, we show that $\S_1, \S_3, \S_5,\S_6$ and $\S_7$ are completely continuous operators. Let us denote $\mathbf{x}_n = (\mathbf{u}_n, \mu_{0n}, \varphi_n)$, $\mathbf{x} = (\mathbf{u}, \mu_0, \varphi)$ and $\mathbf{y}=(\mathbf{v}, \eta , \psi)$. 	Assume that $\mathbf{x}_n \rightharpoonup \mathbf{x}$ in $\Y$. This means $\mathbf{u}_n \rightharpoonup \mathbf{u} $ in $\V_{\mathrm{div}}$ ,  $\mu_{0n} \rightharpoonup \mu_0 $ in $\mathrm{V}_0$ and $\varphi_n \rightharpoonup \varphi $ in $\mathrm{V}_0$, which in turn gives $\mathbf{u}_n \rightarrow \mathbf{u} $ in $\G_{\mathrm{div}}$, $\mu_{0n} \rightarrow \mu_0 $ in $\L^2_{(0)}(\Omega)$ and $\varphi_n \rightarrow \varphi $ in $\L^2_{(0)}(\Omega)$, using the compact embeddings $\V_{\mathrm{div}}\hookrightarrow\G_{\text{div }}$ and $\mathrm{V}_0\hookrightarrow\mathrm{L}^2_{(0)}(\Omega)$.   We have to show that $\S_1 \mathbf{x}_n$ converges strongly to $\S_1 \mathbf{x} $ in $\X'$-norm.	Using \eqref{2.7}, H\"older's and Ladyzhenskaya's inequalities, for $n=2$, we get 
		\begin{align}
		|\,_{\X'}\langle \S_1 \mathbf{x}_n, \mathbf{y} \rangle_{\X}-\,_{\X'}\langle \S_1 \mathbf{x}, \mathbf{y} \rangle_{\X}| \nonumber=& |b(\mathbf{u}_n, \mathbf{u}_n,\mathbf{v})- b(\mathbf{u},\mathbf{u},\mathbf{v})| \nonumber \\
		\nonumber=& |-b(\mathbf{u}_n, \mathbf{v}, \mathbf{u}_n -\u) -b(\mathbf{u}_n-\mathbf{u},\mathbf{v},\mathbf{u})| \\
		\nonumber\leq & \|\mathbf{u}_n \|_{\mathbb{L}^4} \| \nabla \mathbf{v}\| \ \|\mathbf{u}_n-\mathbf{u} \|_{\mathbb{L}^4} + \|\mathbf{u} \|_{\mathbb{L}^4} \| \nabla \mathbf{v}\| \ \|\mathbf{u}_n-\mathbf{u} \|_{\mathbb{L}^4} \\ 
		\label{S12} \leq & 2^{1/4}  \left(\|\mathbf{u}_n\|^{1/2} \| \nabla \mathbf{u}_n\|^{1/2}   +   \|\mathbf{u}\|^{1/2} \| \nabla \mathbf{u}\|^{1/2} \right) \|\mathbf{u}_n - \mathbf{u} \|_{\mathbb{L}^4}  \| \mathbf{v}\|_{\V_{\mathrm{div}}}.  \qquad
		\end{align}
		For $n=3$, using H\"older's and Ladyzhenskaya's inequalities,, we get 
		\begin{align} \label{S13}
		|\,_{\X'}\langle \S_1 \mathbf{x}_n, \mathbf{y} \rangle_{\X} -\,_{\X'}\langle \S_1 \mathbf{x}, \mathbf{y} \rangle_{\X}| \nonumber \nonumber
		\leq & \| \mathbf{u}_n \|_{\mathbb{L}^4} \| \nabla \mathbf{v} \| \ \| \mathbf{u}_n - \mathbf{u} \|_{\mathbb{L}^4} + \| \mathbf{u}_n - \mathbf{u} \|_{\mathbb{L}^4} \| \nabla \mathbf{v} \| \|\mathbf{u} \|_{\mathbb{L}^4} \\ \leq & 2^{1/2}\left(\|\u_n\|^{1/4}\|\nabla\u_n\|^{3/4}+\|\u\|^{1/4}\|\nabla\u\|^{3/4}\right)\|\u_n-\u\|_{\mathbb{L}^4}\|\v\|_{\V_{\mathrm{div}}}.
		\end{align}
		Let us now estimate $|\,_{\X'}\langle \S_3\mathbf{x}_n,\mathbf{y} \rangle - \langle \S_3 \mathbf{x}, \mathbf{y} \rangle_{\X} |$ and $|\,_{\X'}\langle \S_5\mathbf{x}_n,\mathbf{y} \rangle_{\X}- \,_{\X'}\langle \S_5 \mathbf{x}, \mathbf{y} \rangle_{\X}|$. We perform an integration by parts, use H\"older's inequality and the embedding $\H_0^1(\Omega)\hookrightarrow\mathbb{L}^4(\Omega)$ to estimate $|\,_{\X'}\langle \S_3\mathbf{x}_n,\mathbf{y} \rangle - \langle \S_3 \mathbf{x}, \mathbf{y} \rangle_{\X} |$ as 
		\begin{align} \nonumber
		|\,_{\X'}\langle \S_3\mathbf{x}_n,\mathbf{y} \rangle - \langle \S_3 \mathbf{x}, \mathbf{y} \rangle_{\X} |&= \left|-\int_\Omega \mu_{0n} \nabla \varphi_n \cdot \mathbf{v} \d x +\int_\Omega \mu_0 \nabla \varphi \cdot \mathbf{v} \d x\right| \\ \nonumber
		&=\left|\int_\Omega \mu_{0n} \nabla (\varphi - \varphi_n) \cdot \mathbf{v} \d x + \int_\Omega (\mu_0-\mu_{0n})\nabla \varphi \cdot \mathbf{v} \d x  \right|\\ \nonumber
		& \leq  \| \nabla \mu_{0n}\| \ \| \mathbf{v}\|_{\mathbb{L}^4} \|\varphi - \varphi_n \|_{\L^4} + \|\mu_0-\mu_{0n} \|_{\L^4} \|\nabla \varphi \| \ \|\mathbf{v} \|_{\mathbb{L}^4} \\ \label{S3}
		& \leq C\left(  \| \nabla \mu_{0n}\| \|\varphi - \varphi_n \|_{\L^4} +  \|\mu_0-\mu_{0n} \|_{\L^4} \|\nabla \varphi \|\right)  \| \mathbf{v} \|_{\V_{\mathrm{div}}}.
		\end{align}
		Similarly, we have 
		\begin{align} \nonumber
		|\,_{\X'}\langle \S_5\mathbf{x}_n,\mathbf{y} \rangle_{\X}- \,_{\X'}\langle \S_5 \mathbf{x}, \mathbf{y} \rangle_{\X}| &= \left|\int_\Omega (\mathbf{u}_n \cdot \nabla \varphi_n) \eta \d x -\int_\Omega (\mathbf{u} \cdot \nabla \varphi) \eta \d x \right|\\ \nonumber
		&= \left|\int_\Omega (\mathbf{u}_n-\mathbf{u}) \cdot \nabla \varphi_n \eta \d x+\int_\Omega \mathbf{u} \cdot \nabla (\varphi_n -\varphi)\eta \d x\right| \\ \label{S5}
		&\leq \left( \|\mathbf{u}_n-\mathbf{u} \|_{\mathbb{L}^4}\|  \varphi_n\|_{\L^4} +\| \mathbf{u}\|_{\mathbb{L}^4} \ \|\varphi_n -\varphi \| _{\L^4} \right)\|\nabla \eta \|.
		\end{align}
		Using H\"older's inequality,  we obtain
		\begin{align}\label{S6}
		|\,_{\X'}\langle \S_6 \mathbf{x}_n, \mathbf{y} \rangle_{\X}-\,_{\X'}\langle \S_6 \mathbf{x}, \mathbf{y} \rangle_{\X}| \leq & \int_\Omega |\mu_{0n} - \mu_0|\,|\psi| \d x 
		\leq  \|\mu_{0n} - \mu_0 \| \, \|\psi \|
		\end{align}
		and
		\begin{align}\label{S7}
		|\,_{\X'}\langle \S_7 \mathbf{x}_n, \mathbf{y} \rangle_{\X} -\,_{\X'}\langle \S_7 \mathbf{x}, \mathbf{y} \rangle_{\X}| &= \left|\int_\Omega\mathrm{P}_0(\kappa \varphi_n) \psi \d x - \int_\Omega \mathrm{P}_0(\kappa \varphi) \psi \d x \right| 
		\leq  \kappa \int_{\Omega} |\varphi_n - \varphi |  |\psi| \d x \nonumber\\&
		\leq  \kappa \|\varphi_n -\varphi \| \|\psi \|. 
		\end{align}
		From \eqref{S12}-\eqref{S13}, we get
		\begin{align*}
		&	\| \S_1 \mathbf{x}_n -\S_1\mathbf{x}\|_{\X'} \\ &\leq 2^{1/2}  \left(\|\mathbf{u}_n\|^{1/2} \| \nabla \mathbf{u}_n\|^{1/2}   +   \|\mathbf{u}\|^{1/2} \| \nabla \mathbf{u}\|^{1/2} \right)( \|\mathbf{u}_n \|^{1/2}+\|\mathbf{u} \|^{1/2}) \|\mathbf{u}_n - \mathbf{u} \|^{1/2}\ \ (n=2),
		\end{align*} 
		and 
		\begin{align*}
		&	\| \S_1 \mathbf{x}_n -\S_1\mathbf{x}\|_{\X'}  \\ &\leq 2\left(\|\u_n\|^{1/4}\|\nabla\u_n\|^{3/4}+\|\u\|^{1/4}\|\nabla\u\|^{3/4}\right)( \|\mathbf{u}_n \|^{3/4}+\|\mathbf{u} \|^{3/4})\|\u_n-\u\|^{1/4} \ \ (n=3),
		\end{align*} 
		and both converges to $0$ as $n\to\infty$ using the compact embedding of $\V_{\mathrm{div}}\hookrightarrow\G_{\mathrm{div}}$. Using Ladyzhenskaya's inequality and the compact embedding $\mathrm{V}_0 \hookrightarrow \L^2_{(0)}(\Omega)$,  from \eqref{S3}-\eqref{S7}, we have 
		\begin{align*}
		\|  \S_3\mathbf{x}_n- \S_3\mathbf{x}\|_{\X'}  &\leq  \| \nabla \mu_{0n}\| \|\varphi - \varphi_n \|_{\L^4} +  \|\mu_0-\mu_{0n} \|_{\L^4} \|\nabla \varphi \|  \rightarrow 0, \\
		\| \S_5\mathbf{x}_n- \S_5\mathbf{x} \|_{\X'}  &\leq \|\mathbf{u}_n-\mathbf{u} \|_{\mathbb{L}^4}\|  \varphi_n\|_{\L^4} +\| \mathbf{u}\|_{\mathbb{L}^4} \ \|\varphi_n -\varphi \| _{\L^4} \rightarrow 0, \\
		\|\S_6 \mathbf{x}_n- \S_6 \mathbf{x} \|_{\X'} & \leq \| \mu_{0n}- \mu_0\| \rightarrow 0, \\
		\|\S_7 \mathbf{x}_n- \S_7 \mathbf{x} \|_{\X'}  &\leq \kappa \| \varphi_n - \varphi \| \rightarrow 0,
		\end{align*}
		as $n\to\infty$. This proves that $\S_1, \S_3, \S_5, \S_6$ and $\S_7$ are completely continuous and hence pseudo-monotone.
		In order to prove the pseudo-monotonicity of $\S_4$, we use Lemma \ref{pseudo-monotone}. Let us define the operator $\widetilde{\S}_4 : \X  \times \X \rightarrow \X'$ by 
		\begin{align*}
		\,_{\X'}\langle \widetilde{\S}_4  (\mathbf{x}_1,\mathbf{x}_2),\mathbf{y} \rangle_{\X} := \int_\Omega m (\varphi_1) \nabla \mu_{0_2} \cdot \nabla \eta\d x,
		\end{align*}
		where $\mathbf{x}_i = (\mathbf{u}_i, \mu_{0_i},\varphi_i) \in \X, i=1,2,$ respectively and $\mathbf{y}=(\mathbf{v},\eta,\psi) \in \X.$ If $\x=(\u,\mu_0,\varphi)\in\X$, then 
		\begin{align*}
		\,_{\X'}\langle \widetilde{\S}_4 (\mathbf{x},\mathbf{x}_1 - \mathbf{x}_2) , \mathbf{x}_1 - \mathbf{x}_2 \rangle_{\X}  &= \int_\Omega m(\varphi_1) \nabla (\mu_{0_1}- \mu_{0_2}) \cdot \nabla (\mu_{0_1}- \mu_{0_2})\d x\\&=\int_\Omega m(\varphi_1) |\nabla (\mu_{0_1}- \mu_{0_2}) |^2\d x \geq 0,
		\end{align*}
		since $m(\cdot)\geq 0$. This implies $\widetilde{\S}_4(\mathbf{x}, \cdot)$ is monotone. Now for each fixed $\x\in\X$, it can be easily seen that $\,_{\X'}\langle \widetilde{\S}_4(\x,\x_1+t_k\x_2),\y\rangle_{\X}\to \,_{\X'}\langle \widetilde{\S}_4(\x,\x_1),\y\rangle_{\X}$, as $t_k\to0$. Hence, the operator $\widetilde{\S}_4(\mathbf{x}, \cdot)$ is hemicontinuous for each fixed $\x\in\X$.  In order to prove complete continuity of $\widetilde{\S}_4(\cdot ,\mathbf{x})$ for all $\mathbf{x} \in \X$, we consider a sequence $\widetilde{\x}_k \subseteq \X$ such that $\widetilde{\x}_k \rightharpoonup \widetilde{\x}$ in $\X$ and $\widetilde{\x}=(\widetilde{\mathbf{u}}, \widetilde{\mu}_0, \widetilde{\varphi}) \in \X$. Then 
		\begin{align*}
		\,_{\X'}\langle \widetilde{\S}_4(\widetilde{\x}_k ,\mathbf{x}) - \widetilde{\S}_4(\widetilde{\x} ,\mathbf{x}) , \mathbf{y}\rangle_{\X} &= \int_\Omega (m(\widetilde{\varphi}_k) \nabla \mu_0 - m(\widetilde{\varphi}) \nabla \mu_0) \cdot \nabla \mathbf{v}\d x \\
		&\leq \| m(\widetilde{\varphi}_k) \nabla \mu_0 - m(\widetilde{\varphi}) \nabla \mu_0  \| \ \|\mathbf{v} \|_{\mathbb{V}_{\mathrm{div}}}.
		\end{align*}
		We know that $\widetilde{\varphi}_k \rightarrow \widetilde{\varphi}$ in $\L_{(0)}^2(\Omega)$ and $m$ is a continuous function, we have  to show that $\| m(\widetilde{\varphi}_k) \nabla \mu_0 - m(\widetilde{\varphi}) \nabla \mu_0  \| \rightarrow 0$ as $k\to\infty$. Let us define 
		$$\mathrm{H}(\widetilde{\varphi})(x):=g(x,\widetilde{\varphi}(x))=m(\widetilde{\varphi}(x))\nabla \mu_0(x).$$ Then Lemma 1.19, \cite{ruzicka2006nichtlineare} yields that  $\mathrm{H}:\mathrm{L}^2_{(0)}(\Omega)\to\mathrm{L}^2(\Omega)$ is continuous and bounded. Since $\widetilde{\varphi}_k \rightarrow \widetilde{\varphi}$ in $\L_{(0)}^2(\Omega)$, this implies that  $\| m(\widetilde{\varphi}_k) \nabla \mu_0 - m(\widetilde{\varphi}) \nabla \mu_0  \| \rightarrow 0$ as $k\to\infty$ and hence $\widetilde{\S}_4(\cdot,\x)$ is completely continuous for all $\x\in\X$.
		Using Lemma \ref{pseudo-monotone}, we get that the operator  $\S_4(\x)=\widetilde{\S}_4(\x,\x)$ is pseudo-monotone . A similar proof follows for $\S_2$, since it is of the same form as $\S_4$ and $\nu(\cdot)$ is a continuously differentiable function.
		
		Since each of the operators $\S_i$'s are bounded, the operator $\S$ is also bounded. Since pseudo-monotonicity and locally boundedness implies demicontinuity (see Lemma 2.4, \cite{MR3014456} or Proposition 27.7, \cite{MR1033498}), $\S$ is demicontinuous.

		\textbf{(4)} Observe that $\Y$ is unbounded. Choose $\x_0 =(\mathbf{0},0,0) \in \Y \cap \mathrm{D}(\T).$ Then we have to show that there is $R>0$ such that 
		\begin{align*}
		\,_{\X'}\langle \S(\mathbf{x})-\mathbf{b}, \mathbf{x} \rangle_{\X} >0, \ \text{ for all } \ \mathbf{x} \in \Y \ \text{ with } \  \|\mathbf{x}\|_{\X} >R.
		\end{align*} 
		For $\x=(\u,\mu_0,\varphi)\in\Y$, let us consider 
		\begin{align}\label{com}
		_{\X'}\langle \S(\mathbf{x})-\mathbf{b}, \mathbf{x} \rangle_{\X} \nonumber&= \int_\Omega2 \nu(\varphi)\mathrm{D}\mathbf{u} \cdot \mathrm{D}\mathbf{u} \d x + \int_\Omega m(\varphi) \nabla \mu_0 \cdot \nabla \mu_0 \d x- \int_\Omega \mu_0 \varphi \d x \\
		\nonumber&\quad- \int_\Omega \mathrm{P}_0(\kappa \varphi) \varphi \d x - \langle \mathbf{h} , \mathbf{u} \rangle\\
		&=: \I_1+ \I_2+ \I_3 + \I_4 +\I_5.
		\end{align}
		Since $|\varphi(x)| \leq 1$, and $\nu(\cdot)$ is a continuously differentiable function, we have 
		\begin{align*}
		\I_1 = \int_\Omega 2\nu(\varphi) \mathrm{D}\mathbf{u} \cdot\mathrm{D}\mathbf{u}  \d x\geq 2\int_{\Omega}\nu(\varphi)|\nabla\u|^2\d x \geq \widetilde{C}_1 \|\mathbf{u} \| ^2 _{\V_{\mathrm{div}}}.
		\end{align*}
		Now, since mean value of $\mu_0$ is $0$ and $\mu(\cdot)$ is a continuous function, we also obtain 
		\begin{align*}
		\I_2=\int_\Omega m(\varphi) \nabla \mu_0 \cdot \nabla \mu_0 \d x=\int_\Omega m(\varphi) |\nabla \mu_0|^2 \d x \geq \widetilde{C}_2 \| \mu_0\|^2_{\mathrm{V}_0}.
		\end{align*}
		Using H\"older's, Poincar\'e's and Young's inequalities, we get 
		\begin{align*}
		\I_3&=\int_\Omega \mu_0 \varphi \d x \leq \int_{\Omega}|\mu_0|\d x\leq |\Omega|^{1/2}\|\mu_0\|_{\mathrm{L}^2}\leq  \widetilde{C}_3 \| \mu_0 \|_{\mathrm{V}_0},\\
		\I_4&=\int_\Omega \mathrm{P}_0(\kappa \varphi) \varphi \d x = \kappa \| \varphi \|^2 \leq \widetilde{C}_4,\\
		\I_5&= \langle \mathbf{h} , \mathbf{u} \rangle \leq \|\h\|_{\V_{\mathrm{div}}'}\|\u\|_{\V_{\mathrm{div}}} \leq \frac{1}{2\widetilde{C}_1} \| \mathbf{h}\|_{\V'_{\mathrm{div}}}^2 +\frac{ \widetilde{C}_1}{2} \|\mathbf{u} \|^2_{\V_{\mathrm{div}}},
		\end{align*}
		for some constants $\widetilde{C}_1, \widetilde{C}_2, \widetilde{C}_3, \widetilde{C}_4 >0$. Combining the above inequalities and substituting in \eqref{com} yields  
		\begin{align*}
		_{\X'}\langle \S(\mathbf{x})-\mathbf{b}, \mathbf{x} \rangle_{\X} & \geq  \frac{\widetilde{C}_1}{2} \|\mathbf{u} \| ^2_{\V_{\mathrm{div}}} + \underbrace{\| \mu_0 \|_{\mathrm{V}_0}\left(\widetilde{C}_2 \| \mu_0 \|_{\mathrm{V}_0} -\widetilde{C}_3\right)}_{:=g\left(\|\mu_0\|_{\mathrm{V}_0}\right)}-\widetilde{C}_4 - \frac{1}{2\widetilde{C}_1} \| \mathbf{h}\|_{\V_{\mathrm{div}}'}^2 
		\end{align*}
		for $\x=(\mathbf{u}, \mu_0, \varphi) \in \Y$. Since $\h\in\G$ is fixed, we can choose a constant $K>0$ large enough such that 
		\begin{align*}
		\widetilde{C}_4 + \frac{1}{2\widetilde{C}_1}\| \mathbf{h}\|_{\V'_{\mathrm{div}}}^2 \leq K. 
		\end{align*}Furthermore, it can be easily seen that 
		$$\lim_{x\to+\infty}g(x)=+\infty.$$
		Since $K$ is chosen and $g (\cdot)$ grows to infinity, we can choose $R>0$  such that
		\begin{align*}
		_{\X'}\langle \S(\mathbf{x})-\mathbf{b}, \mathbf{x} \rangle_{\X }>0 \ \text{ for all } \ \x=(\u,\mu_0,\varphi)\in\Y \ \text{ such that } \ \|\mathbf{x}\|_{\X} >R. 
		\end{align*}
		Hence using Theorem \ref{Browder}, there exists $(\mathbf{u}, \mu_0,\varphi) \in \Y \cap \mathrm{D}(\T)$ such that $\mathbf{b} \in \T(\mathbf{u}, \mu_0,\varphi) +\S(\mathbf{u}, \mu_0,\varphi)$, which completes the proof.
	\end{proof}
	\begin{theorem} \label{mainthm}
		Let $\Omega \subset \mathbb{R}^n$, $n=2,3$ be a bounded subset. Let $\mathbf{h} \in \V'_{{\mathrm{div}}}$ and $k \in (-1,1),$ then under the Assumptions (A1)-(A9),  there exists a weak solution $(\mathbf{u}, \mu,\varphi )$ to the system \eqref{steadysys} such that 
		\begin{align*}
		\mathbf{u} \in \V_{{\mathrm{div}}}, \quad  \quad \varphi \in  \mathrm{V} \cap \L^2_{(k)}(\Omega) \quad \mathrm{and} \quad \mu \in \mathrm{V}.
		\end{align*}	 
		and satisfies the weak formulation \eqref{weakphi}-\eqref{weakform nse}.
	\end{theorem}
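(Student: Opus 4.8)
The plan is to assemble the three preparatory lemmas of this section into the statement and then read off the asserted regularity, so that essentially no new computation is required. First I would invoke the reduction to zero mean value explained after Definition \ref{weakdef}: setting $\widetilde{\varphi} := \varphi - k$ and $\F_k(\cdot) := \F(\cdot + k)$, the unknown concentration $\widetilde{\varphi}$ acquires average zero, and one checks that Assumptions (A1)--(A9) pass to $\F_k$ on the shifted interval $(-1-k,\,1-k)$, which still contains a neighbourhood of the origin since $k\in(-1,1)$; the sign, positivity and monotonicity conditions on the derivatives of $\F_1$ near the singular endpoints merely translate with the shift, and (A9) is untouched because $a$ and $\|\J\|_{\L^1}$ are unchanged. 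Hence it suffices to solve the problem for $k=0$, which is exactly the abstract equation \eqref{operatorform} posed on $\X=\V_{\mathrm{div}}\times\mathrm{V}_0\times\mathrm{V}_0$.

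Next I would apply Lemma \ref{browderproof}, whose proof verifies every hypothesis of Browder's theorem (Theorem \ref{Browder}): $\Y$ is a nonempty closed convex subset of the reflexive space $\X$, the operator $\T$ of \eqref{operator T} is maximal monotone, the operator $\S$ of \eqref{operator S} is pseudo-monotone, bounded and demicontinuous, and $\S$ is $\T$-coercive relative to $\mathbf{b}$. This produces a triple $(\mathbf{u},\mu_0,\varphi)\in\Y\cap\mathrm{D}(\T)$ with $\mathbf{b}\in\T(\mathbf{u},\mu_0,\varphi)+\S(\mathbf{u},\mu_0,\varphi)$. Lemma \ref{oper_reform} then shows this triple solves the reformulated system \eqref{reformphi}--\eqref{nsreform}, and Lemma \ref{reformtoweak} converts it into a solution $(\mathbf{u},\mu,\varphi)$ of the weak formulation \eqref{weakphi}--\eqref{weakform nse}, with $\mu=\mu_0+\overline{\mu}$ and $\overline{\mu}=|\Omega|^{-1}\int_\Omega\F'(\varphi)\,\d x$.

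It then remains to read off the stated regularity. By construction $\mathbf{u}\in\V_{\mathrm{div}}$. Since $\varphi\in\mathrm{D}(\partial_{\mathrm{V}_0}f)\subseteq\Z$, we have $\varphi\in\mathrm{V}_0\subset\mathrm{V}$ with $\varphi(x)\in[-1,1]$ a.e.; undoing the translation restores $\varphi\in\mathrm{V}\cap\L^2_{(k)}(\Omega)$. Finally $\mu_0\in\mathrm{V}_0\subset\mathrm{V}$ and $\overline{\mu}$ is a constant, so $\mu=\mu_0+\overline{\mu}\in\mathrm{V}$; here one must check that $\overline{\mu}$ is well defined, i.e. that $\F'(\varphi)\in\L^1(\Omega)$, which holds because $\varphi\in\mathrm{D}(\partial_{\mathrm{V}_0}f)$ forces $\mathrm{G}'(\varphi)\in\mathrm{H}=\L^2(\Omega)$ by Lemma \ref{subgrad H1_0}, and $\F'(\varphi)=\mathrm{G}'(\varphi)-\kappa\varphi\in\L^2(\Omega)\subset\L^1(\Omega)$.

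The genuinely hard work is entirely contained in Lemma \ref{browderproof}; given that, the theorem is a matter of threading the equivalences through Lemmas \ref{oper_reform} and \ref{reformtoweak} while tracking the mean value. The step I would watch most carefully is the recovery of the full chemical potential: one needs the solution delivered by Browder's theorem to lie genuinely in $\mathrm{D}(\partial_{\mathrm{V}_0}f)$ with $\mathrm{G}'(\varphi)\in\L^2(\Omega)$, so that $\mu$ reconstitutes \eqref{weakmu} rather than only its zero-mean part $\mu_0$. The reduction to $k=0$ also deserves attention, since it requires confirming that the translated singular potential $\F_k$ retains Assumptions (A3)--(A7) on the shifted interval; because these are local conditions at the endpoints $\pm 1$ they transfer directly under the shift, and the argument goes through for every $k\in(-1,1)$.
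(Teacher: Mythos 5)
Your proposal is correct and follows essentially the same route as the paper: the paper's proof of Theorem \ref{mainthm} is exactly the assembly of Lemma \ref{browderproof} (Browder's theorem), Lemma \ref{oper_reform}, and Lemma \ref{reformtoweak}. The additional details you supply — verifying that the assumptions survive the shift to $k=0$, and that $\overline{\mu}$ is finite because $\varphi\in\mathrm{D}(\partial_{\mathrm{V}_0}f)$ forces $\mathrm{G}'(\varphi)\in\L^2(\Omega)$ — are points the paper leaves implicit, and your treatment of them is sound.
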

	\begin{proof}
		Lemma \ref{browderproof} gives the existence of $(\mathbf{u}, \mu_0,\varphi) \in \Y \cap \mathrm{D}(\T)$ such that $\mathbf{b} \in \T(\mathbf{u}, \mu_0,\varphi) +\S(\mathbf{u}, \mu_0,\varphi)$. From Lemma \ref{oper_reform} we get that $(\mathbf{u}, \mu_0,\varphi)$ satisfies the reformulated problem \eqref{reformphi}-\eqref{nsreform}. Hence from Lemma \ref{reformtoweak}, we know that  $(\mathbf{u}, \mu,\varphi)$ satisfies the weak formulation \eqref{weakphi}-\eqref{weakform nse}, which completes the proof.
	\end{proof} 
		\begin{remark}\label{F'phi}
			Note that, since $\mu \in \mathrm{V}$ and $\varphi \in \mathrm{V}\cap \L^2_{(k)}(\Omega)$ and $\|a\|_{\mathrm{L}^\infty} \leq \|\J\|_{\mathrm{L}^1}$, from \eqref{weakmu}, we have 
			\begin{align*}
			\left| \int_\Omega\F'(\varphi)\psi \d x\right|= \left| \int_\Omega\mu \psi \d x- \int_\Omega(a \varphi-\J * \varphi)\psi \d x \right|\leq (\|\mu \| + \|a\|_{\L^{\infty}} \|\varphi\| + \|\J\|_{\L^1} \|\varphi\| ) \|\psi\|,
			\end{align*}
			for all $\psi \in \mathrm{H}$, so that $\|\F'(\varphi)\| \leq \|\mu\| + 2 \|\J\|_{\mathrm{L}^1}\|\varphi\| < \infty$.
		\end{remark}

	\section{Uniqueness of  Weak Solutions and Regularity}\setcounter{equation}{0}\label{sec4} In this section, we prove that the weak solution to the system \eqref{steadysys} obtained in Theorem \ref{mainthm} is unique. In this section, we assume  that the viscosity coefficient $\nu$ and the mobility parameter $m$ are positive constants. We also prove some regularity results for the solution. 
	\subsection{Uniqueness} Recall that our existence result ensures that $\varphi(\cdot) \in [-1, 1]$, a.e. (see  \eqref{Zdef}). Then, we have the following uniqueness theorem.
	\begin{theorem}\label{unique-steady}
		Let $(\u_i,\mu_i,\varphi_i) \in \V_{{\mathrm{div}}}  \times \mathrm{V} \times (\mathrm{V}\cap \L^2_{(k)}(\Omega))$ for $i=1,2$ be two \emph{weak solutions} of the following system (satisfied in the weak sense).
		\begin{equation}
		\left\{
		\begin{aligned}\label{3.2}
		\mathbf{u} \cdot \nabla \varphi &= m\Delta \mu, \ \emph{ in }\ \Omega, \\
		\mu &= a \varphi - \mathrm{J}*\varphi + \mathrm{F}'(\varphi)  \\
		-\nu \Delta \mathbf{u} + (\mathbf{u} \cdot \nabla) \mathbf{u} + \nabla \uppi &= \mu \nabla \varphi + \mathbf{h}, \ \emph{ in }\ \Omega,\\ 
		\emph{div }\mathbf{u} &= 0, \ \emph{ in }\ \Omega, \\
		\frac{\partial \mu}{ \partial \mathbf{n}}=0, \ \mathbf{u}&=0, \ \emph{ on }\ \partial\Omega,
		\end{aligned}
		\right.
		\end{equation} 
		where $\h\in\V_{{\mathrm{div}}}'$. For $n=2$, we have $\u_1=\u_2$ and $\varphi_1=\varphi_2$, provided  
		\begin{align*}
		& (i) \  \nu^2 >  \frac{2\sqrt{2}}{\sqrt{\lambda_1}} \|\h\|_{\mathbb{V}'_{\mathrm{div}}}  + \frac{12 \nu }{ \lambda_1m C_0} \|\nabla \J \|_{\mathbb{L}^1}^2,\\
		&(ii) \ (\nu m)^2 \left( \frac{C_0}{4} -\frac{C}{C_0} \|\nabla \J\|^2_{\mathbb{L}^1}  \right)>  \nu m  \left(\frac{C}{\lambda_1}\right) + \frac{2 C }{ C_0} \left( \frac{2}{\lambda_1} \right)^{\frac{1}{2}}  \| \h \|^2 _{\mathbb{V}'_{\mathrm{div}}} \ \text{ with }\ \|\nabla\mathrm{J}\|^2_{\mathbb{L}^1}<\frac{C^2_0}{4C},
		\end{align*}
		Similarly, for $n=3$, we have $\u_1=\u_2$ and $\varphi_1=\varphi_2$, provided 
		\begin{align*}
		&(i)\ \nu^2  >  \left( \frac{16}{\sqrt{\lambda_1}} \right)^{\frac{1}{2}}  \|\h\|_{\mathbb{V}'_{\mathrm{div}}} +\frac{12 \nu }{ \lambda_1m C_0} \|\nabla \J \|_{\mathbb{L}^1}^2,  \\
		&(ii)\  (\nu m)^2 \left( \frac{C_0}{4} -\frac{C}{C_0} \|\nabla \J\|_{\mathbb{L}^1}^2 \right)>   \nu m  \left(\frac{C}{\lambda_1}\right) + \frac{2 C }{ C_0} \left( \frac{4}{\sqrt{\lambda_1}} \right)^{\frac{1}{2}}  \| \h \|^2 _{\mathbb{V}'_{\mathrm{div}}},  \ \text{ with }\ \|\nabla\mathrm{J}\|^2_{\mathbb{L}^1}<\frac{C^2_0}{4C},
		\end{align*}	
		where $C$ is a generic constant depends on the embeddings.
	\end{theorem}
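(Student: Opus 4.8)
The plan is to argue by a difference/energy estimate. Set $\u=\u_1-\u_2$, $\varphi=\varphi_1-\varphi_2$ and $\mu=\mu_1-\mu_2$. Since both concentrations have the same average $k$, the difference $\varphi$ has zero mean, so $\varphi\in\mathrm{V}_0$ and the Poincar\'e--Wirtinger inequality (Lemma \ref{poin}) together with the inverse Neumann operator $\mathcal{B}^{-1}$ are at our disposal. First I would record the a priori bounds: testing the momentum equation of each solution with $\u_i$ and the phase equation with $\mu_i$, and using $b(\u_i,\u_i,\u_i)=0$ together with $\int_\Omega\mu_i\nabla\varphi_i\cdot\u_i\,\d x=\int_\Omega(\u_i\cdot\nabla\varphi_i)\mu_i\,\d x=-m\|\nabla\mu_i\|^2$, gives $\nu\|\nabla\u_i\|^2+m\|\nabla\mu_i\|^2=\langle\h,\u_i\rangle$, whence $\|\nabla\u_i\|\le\|\h\|_{\V_{\mathrm{div}}'}/\nu$ and a matching bound on $\|\nabla\mu_i\|$.

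The core step is to subtract the two systems, test the momentum difference with $\u$ (so that $b(\u_2,\u,\u)=0$) and the phase difference with $\psi=\mu$, and add. Expanding $\mu_1\nabla\varphi_1-\mu_2\nabla\varphi_2$ and $\u_1\cdot\nabla\varphi_1-\u_2\cdot\nabla\varphi_2$ in terms of the differences, the transport and capillary couplings recombine and the identity collapses to
\begin{align*}
\nu\|\nabla\u\|^2+m\|\nabla\mu\|^2=-b(\u,\u_1,\u)+\int_\Omega(\mu_2\u-\mu\,\u_2)\cdot\nabla\varphi\,\d x.
\end{align*}
The left-hand side is coercive, and the whole task is to dominate the two terms on the right by $\nu\|\nabla\u\|^2$, $m\|\nabla\mu\|^2$ and a small multiple of $\|\nabla\varphi\|^2$.

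To produce the needed control of $\|\nabla\varphi\|$ I would derive a nonlocal gradient estimate from $\mu=a\varphi-\J*\varphi+\F'(\varphi_1)-\F'(\varphi_2)$. Taking the gradient and pairing with $\nabla\varphi$, the monotone part $(a+\F''(\varphi_1))\nabla\varphi$ contributes $\ge C_0\|\nabla\varphi\|^2$ by (A9), while $\varphi\nabla a$ and $(\nabla\J)*\varphi$ generate the factor $\|\nabla\J\|_{\mathbb{L}^1}$ through $\|\nabla a\|_{\mathrm{L}^\infty},\,\|(\nabla\J)*\varphi\|\le\|\nabla\J\|_{\mathbb{L}^1}\|\varphi\|$. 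This yields $\|\nabla\varphi\|\lesssim C_0^{-1}\big(\|\nabla\mu\|+\|\nabla\J\|_{\mathbb{L}^1}\|\varphi\|\big)$, and Poincar\'e converts it into $\|\nabla\varphi\|\le C\,C_0^{-1}\|\nabla\mu\|$ precisely under the standing hypothesis $\|\nabla\J\|_{\mathbb{L}^1}^2<C_0^2/(4C)$ appearing in (ii).

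For the coupling I would integrate by parts using $\mathrm{div}\,\u=\mathrm{div}\,\u_2=0$ to get $\int_\Omega(\mu_2\u-\mu\u_2)\cdot\nabla\varphi\,\d x=-\int_\Omega\varphi\,\u\cdot\nabla\mu_2\,\d x+\int_\Omega\varphi\,\u_2\cdot\nabla\mu\,\d x$, and estimate each piece by H\"older and Ladyzhenskaya's inequality in the relevant dimension (this is where the $\sqrt{2/\lambda_1}$ for $n=2$ and the $(4/\sqrt{\lambda_1})^{1/2}$-type constants for $n=3$ enter), inserting the a priori bounds $\|\u_2\|_{\mathbb{L}^4},\|\nabla\mu_2\|\lesssim\|\h\|_{\V_{\mathrm{div}}'}/\nu$. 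The convection term is handled by $|b(\u,\u_1,\u)|\le\sqrt{2/\lambda_1}\,\|\nabla\u_1\|\,\|\nabla\u\|^2\le\sqrt{2/\lambda_1}\,(\|\h\|_{\V_{\mathrm{div}}'}/\nu)\|\nabla\u\|^2$ for $n=2$ (and the analogous $\lambda_1^{-1/4}$ bound for $n=3$). Feeding in the gradient estimate for $\|\nabla\varphi\|$ and applying Young's inequality, every cross term is absorbed into $\nu\|\nabla\u\|^2+m\|\nabla\mu\|^2+\tfrac{C_0}{4}\|\nabla\varphi\|^2$, and the demand that the surviving coefficients be positive is exactly (i) and (ii); this forces $\|\nabla\u\|=\|\nabla\mu\|=\|\nabla\varphi\|=0$, hence $\u_1=\u_2$ and, as $\varphi$ has zero mean, $\varphi_1=\varphi_2$. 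I expect the genuine obstacle to lie in the gradient estimate: differentiating $\F'(\varphi_1)-\F'(\varphi_2)$ leaves the indefinite remainder $(\F''(\varphi_1)-\F''(\varphi_2))\nabla\varphi_2$, which monotonicity alone does not tame; controlling it needs the higher regularity of $\varphi_2$ (so $\nabla\varphi_2\in\mathbb{L}^4$) and the local Lipschitz behaviour of $\F''$ away from $\pm1$ furnished by the regularity results of this section, with the generic constant $C$ absorbing these embedding and regularity constants. The remaining delicate point is the bookkeeping that matches the absorption thresholds to the precise numerical constants in (i)--(ii).
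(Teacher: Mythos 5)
Your difference identity is correct: testing the momentum difference with $\u=\u_1-\u_2$ and the phase difference with $\mu=\mu_1-\mu_2$ does collapse, after the cancellation $(\mu\nabla\varphi_1,\u)=(\u\cdot\nabla\varphi_1,\mu)$, to $\nu\|\nabla\u\|^2+m\|\nabla\mu\|^2=-b(\u,\u_1,\u)+\int_\Omega(\mu_2\u-\mu\,\u_2)\cdot\nabla\varphi\,\d x$, and your a priori bound $\nu\|\nabla\u_i\|\le\|\h\|_{\V_{\mathrm{div}}'}$ matches the paper's first step. The genuine gap is precisely the one you flag and then wave away: your route forces you to control $\|\nabla\varphi\|$ of the \emph{difference}, and the proposed nonlocal gradient estimate requires differentiating $\F'(\varphi_1)-\F'(\varphi_2)$, leaving the remainder $(\F''(\varphi_1)-\F''(\varphi_2))\nabla\varphi_2$. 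To bound this you need $\F''$ to be Lipschitz on the set of values actually taken by $\varphi_1,\varphi_2$. For the singular potentials of this paper (e.g. logarithmic, $\F''(s)=\theta/(1-s^2)$ up to the quadratic perturbation), $\F'''$ blows up at $\pm1$, while the weak solutions are only known to satisfy $\varphi_i(x)\in[-1,1]$ a.e.; no strict separation $\|\varphi_i\|_{\L^{\infty}}\le 1-\delta$ is proved in this paper (and none is available in 3D), so the ``local Lipschitz behaviour of $\F''$ away from $\pm1$'' cannot be invoked. The regularity theorem you appeal to gives $\nabla\varphi_2\in\mathbb{L}^p$ but not separation, and it moreover assumes $\h\in\G_{\mathrm{div}}$, which is strictly stronger than the hypothesis $\h\in\V_{\mathrm{div}}'$ of the uniqueness theorem, so importing it would also change the statement being proved.

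The paper circumvents this obstruction by never differentiating the potential. It tests the phase difference equation with $\mathcal{B}^{-1}\varphi^e$ (inverse Neumann operator applied to $\varphi^e=\varphi_1-\varphi_2$) instead of with $\mu^e$; then $\langle\Delta\mu^e,\mathcal{B}^{-1}\varphi^e\rangle=-(\mu^e,\varphi^e)$, and the mean value theorem applied at the level of $\F'$ together with (A9) gives $(\mu^e,\varphi^e)\ge C_0\|\varphi^e\|^2-(\J*\varphi^e,\varphi^e)$, so the possibly unbounded $\F''$ enters only with a favourable sign. On the momentum side, the capillary force is first rewritten as $-\nabla a\,\varphi^2/2-(\J*\varphi)\nabla\varphi$ modulo a gradient absorbed into the pressure, so every nonlocal term in the difference equation can be integrated by parts onto $\nabla\J$ and estimated using $\|\varphi_i\|_{\L^{\infty}}\le1$; the remaining term $(\J*\varphi^e,\varphi^e)$ is handled by the $B_N^{\pm1/2}$ duality trick. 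The whole argument then closes using only $\|\u^e\|$, $\|\nabla\u^e\|$ and $\|\varphi^e\|$ — the norm $\|\nabla\varphi^e\|$ never appears, which is exactly what makes the singular potential harmless. To rescue your approach you would need a strict separation property for the solutions, a substantially stronger input than anything established here.
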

	
	\begin{proof}
		Let us first find a simple bound for the velocity field. We take inner product of the first equation in \eqref{3.2} with $\mu$ and third equation with $\u$ to obtain 
		\begin{align} \label{phi}
		(\u \cdot \nabla \varphi, \mu)= -m \| \nabla \mu \|^2
		\end{align}
		and
		\begin{align} \label{u}
		\nu \| \nabla \u\|^2 = (\mu \nabla \varphi, \u) +\langle\h, \u\rangle.
		\end{align}
		Adding \eqref{phi} and \eqref{u}, we obtain 
		\begin{align}\label{1p}
		m\|\nabla\mu\|^2+\nu\|\nabla\u\|^2=\langle\h, \u\rangle,
		\end{align}
		where we used the fact that $(\u \cdot \nabla \varphi, \mu)=(\mu \nabla \varphi, \u)$. From \eqref{1p}, we infer that 
		\begin{align*}
		\nu\|\nabla\u\|^2 \leq m\|\nabla\mu\|^2 + \nu\|\nabla\u\|^2 =\langle\mathbf{h},\u\rangle \leq \| \h \|_{\mathbb{V}_{\mathrm{div}}'} \|\u \|_{\mathbb{V}_{\mathrm{div}}}.
		\end{align*}
		Finally, we have 
		\begin{align} \label{gradu estimate}
		\nu \|\nabla\u\| \leq \| \h \|_{\mathbb{V}_{\mathrm{div}}'}.
		\end{align}
		
		Let $(\u_1, \mu_1,\varphi_1) $ and $(\u_2,\mu_2, \varphi_2)$  be two weak solutions of the system \eqref{3.2}. Note that the averages of $\varphi_1$ and $\varphi_2$ are same and are equal to $k$, which gives $\overline{\varphi}_1 - \overline{\varphi}_2=0$. 
		We can rewrite the third equation in \eqref{3.2} as
		\begin{equation}\label{nonlin u rewritten}
		-\nu \Delta \u + (\u\cdot \nabla )\u + \nabla \widetilde{\uppi}_{\u} = - \nabla a\frac{{\varphi}^2}{2} - (\J\ast \varphi)\nabla \varphi + \h \ \text{ in }\ \V_{{\mathrm{div}}}',
		\end{equation} 
		where $\widetilde{\uppi}_{\u} :=\widetilde{\uppi} = \uppi -\left( \F(\varphi) + a\frac{{\varphi}^2}{2}\right)$. 	Let us define $\u^e:=\u_1-\u_2$, $\varphi^e:=\varphi_1-\varphi_2$ and $\widetilde{\uppi}^e:=\widetilde{\uppi}_{\u_1}-\widetilde{\uppi}_{\u_2}$. Then  for every $\mathbf{v} \in \V_{\mathrm{div}}$ and $\psi \in \mathrm{V}, \, (\u^e,\varphi^e)$ satisfies the following:
		\begin{align*}
		(\mathbf{u}^e \cdot \nabla \varphi_1, \psi)+(\u_2\cdot\nabla\varphi^e,\psi )&= -m(\nabla \mu^e , \nabla \psi), \\
		\nu (\nabla \mathbf{u}^e, \nabla \mathbf{v} )+ b(\mathbf{u}_1,\mathbf{u}^e,\mathbf{v})+ b(\mathbf{u}^e,\mathbf{u}_2,\mathbf{v}) 
		&  = -\frac{1}{2}\left(\varphi^e( \varphi_1+\varphi_2)\nabla a , \mathbf{v}\right)- ((\mathrm{J}*\varphi^e)\nabla\varphi_2 , \mathbf{v})\nonumber\\
		&\quad- ((\mathrm{J}*\varphi_1)\nabla\varphi^e,\mathbf{v}),
		\end{align*}
		Let us choose $\mathbf{v} = \mathbf{u}^e$ and $\psi = \mathcal{B}^{-1} \varphi^e$ to get 
		\begin{align} \label{s48}
		(\mathbf{u}^e \cdot \nabla \varphi_1, \mathcal{B}^{-1} \varphi^e)+(\u_2\cdot\nabla\varphi^e, \mathcal{B}^{-1} \varphi^e) &= m\langle \Delta \mu^e ,\mathcal{B}^{-1} \varphi^e \rangle, \\ \label{s49}
		\nu\|\nabla\u^e\|^2&= -b(\mathbf{u}^e,\mathbf{u}_2,\u^e)-\frac{1}{2}(\nabla a\varphi^e( \varphi_1+\varphi_2),\u^e)\nonumber\\&\quad -( (\mathrm{J}*\varphi^e)\nabla\varphi_2,\u^e) - ((\mathrm{J}*\varphi_1)\nabla\varphi^e,\u^e),
		\end{align}
		where we used the fact that $b(\mathbf{u}_1, \mathbf{u}^e,\u^e)=0$.
		Using \eqref{bes}, Taylor's series expansion and (A9),  we estimate the  term $(\nabla \mu^e ,\nabla \mathcal{B}^{-1} \varphi^e)$ from \eqref{s48} as  
		\begin{align}\label{3.13}
		-\langle-\Delta \mu^e,\mathcal{B}^{-1} \varphi^e\rangle & =-(\mu^e, \varphi^e )
		=	-(a \varphi^e - \mathrm{J}*\varphi^e + \mathrm{F}'(\varphi_1)-\mathrm{F}'(\varphi_2),\varphi^e ) \nonumber \\ 
		& = -((a  + \mathrm{F}''(\varphi_1+ \theta \varphi_2)) \varphi^e, \varphi^e ) + (\mathrm{J}*\varphi^e, \varphi^e) \nonumber\\
		& \leq -C_0 \| \varphi^e \|^2 + (\mathrm{J}*\varphi^e, \varphi^e).
		\end{align}
		Using \eqref{3.13} in  \eqref{s48}, we obtain 
		\begin{align}\label{3.14}
		mC_0\|\varphi^e\|^2 \leq   -(\u^e\cdot\nabla\varphi_1,\mathcal{B}^{-1}\varphi^e)-(\u_2\cdot\nabla\varphi^e,\mathcal{B}^{-1}\varphi^e) + m(\J\ast \varphi^e , \varphi^e).
		\end{align}
		
		Let us now estimate the terms in the right hand side of \eqref{s49} and \eqref{3.14} one by one. We use the H\"older's, Ladyzhenskaya's, Young's and Poincar\'{e}'s inequalities to estimate $|b(\mathbf{u}^e,\mathbf{u}_2,\u^e)|$. 
		For $n=2$, we get
		\begin{equation}\label{3.15a}
		|b(\mathbf{u}^e,\mathbf{u}_2,\u^e)|\leq \|\u^e\|_{\mathbb{L}^4}^2\|\nabla\u_2\|\leq \sqrt{2}\|\u^e\|\|\nabla\u^e\|\|\nabla\u_2\|\leq \left( \frac{2}{\lambda_1} \right)^{\frac{1}{2}} \|\nabla\u^e\|^2 \|\nabla\u_2\|,
		\end{equation}
		and	for $n=3$, we have 
		\begin{equation}\label{3.15b}
		|b(\mathbf{u}^e,\mathbf{u}_2,\u^e)|\leq \|\u^e\|_{\mathbb{L}^4}                                         ^2\|\nabla\u_2\| \leq 2\|\u^e\|^{\frac{1}{2}} \|\nabla\u^e\|^{\frac{3}{2}} \|\nabla\u_2\| \leq \left( \frac{4}{\sqrt{\lambda_1}} \right)^{\frac{1}{2}} \|\nabla\u^e\|^2 \|\nabla\u_2\|.
		\end{equation}
		Since $(\u_i, \varphi_i)$ are weak solutions of \eqref{3.2} for $i=1,2,$ such that $\varphi_i \in [-1, 1] $, a.e., we have,  $ |\varphi_i (x) | \leq 1$.
		Using H\"older's, Ladyzhenskaya's and Young's inequalities and boundedness of $\varphi$, we obtain 	
		\begin{align}\label{3.16a}
		\left|\frac{1}{2}(\nabla a\varphi^e( \varphi_1+\varphi_2),\u^e)\right| &\leq \frac{1}{2}\|\nabla a\|_{\mathbb{L}^{\infty}}\|\varphi^e\|\|\varphi_1+\varphi_2\|_{\mathrm{L}^{\infty}}\|\u^e\| \no\\
		&\leq \frac{1}{2}\|\nabla a\|_{\mathbb{L}^{\infty}}\|\varphi^e\|(\|\varphi_1\|_{\mathrm{L}^{\infty}} +\|\varphi_2\|_{\mathrm{L}^{\infty}}) \|\u^e\| \no\\
		&\leq \frac{m C_0}{8}\|\varphi^e\|^2+\frac{2}{m C_0}\|\nabla a\|_{\mathbb{L}^{\infty}}^2\|\u^e\|^2.
		\end{align}
		In order to estimate $((\mathrm{J}*\varphi^e)\nabla\varphi_2,\u^e)$ and $((\mathrm{J}*\varphi_1)\nabla\varphi^e,\u^e)$, we write these terms using an integration by parts and the divergence free condition as
		\begin{align*}
		((\mathrm{J}*\varphi^e)\nabla\varphi_2,\u^e) = -((\nabla\mathrm{J}*\varphi^e)\varphi_2,\u^e), \\
		((\mathrm{J}*\varphi_1)\nabla\varphi^e,\u^e) = -((\nabla\mathrm{J}*\varphi_1)\varphi^e,\u^e).
		\end{align*}
		Using H\"older's and Ladyzhenskaya's inequalities,  and Young's inequality for convolution, we estimate $|((\nabla\mathrm{J}*\varphi^e)\varphi_2,\u^e)|$  as
		\begin{align}\label{3.17a}
		|((\nabla\mathrm{J}*\varphi^e)\varphi_2,\u^e)| &\leq \| \nabla \J*\varphi^e\|_{\mathbb{L}^2}\|\varphi_2\|_{\mathrm{L}^\infty} \|\u^e\|\leq  \| \nabla \J\|_{\mathbb{L}^1} \|\varphi^e\| \|\u^e\| \no \\
		&\leq \frac{m C_0}{8}\|\varphi^e\|^2 + \frac{2}{m C_0}\| \nabla \J\|_{\mathbb{L}^1}^2 \|\u^e\|^2.
		\end{align}
		Similarly, we obtain
		\begin{align}\label{3.18a}
		\left|((\nabla\mathrm{J}*\varphi_1)\varphi^e,\u^e)\right| &\leq \frac{m C_0}{8}\|\varphi^e\|^2 + \frac{2}{m C_0}\| \nabla \J\|_{\mathbb{L}^1}^2 \|\u^e\|^2 .
		\end{align}
		Substituting \eqref{3.15a}, \eqref{3.16a}-\eqref{3.18a} in \eqref{s49}, then using \eqref{gradu estimate} and the fact that $\| \nabla a \|_{\L^\infty} \leq \|\nabla \J \|_{\mathbb{L}^1}$,  for $n =2$, we obtain 
		\begin{subequations}
			\begin{align}\label{3.19a}
			\nu\|\nabla\u^e\|^2  & \leq \frac{3 m C_0}{8}\|\varphi^e\|^2 + \frac{1}{\nu} \left( \frac{2}{\lambda_1} \right)^{\frac{1}{2}} \|\h\|_{\mathbb{V}'_{\mathrm{div}}} \|\nabla\u^e\|^2  +\frac{6}{ m C_0} \|\nabla \J \|_{\mathbb{L}^1}^2   \|\u^e\|^2.
			\end{align}
			Combining \eqref{3.15b}, \eqref{3.16a}-\eqref{3.18a} and substituting it in \eqref{s49}, for $n=3$, we get
			\begin{align} \label{3.19b}
			\nu\|\nabla\u^e\|^2  & \leq \frac{3 m C_0}{8}\|\varphi^e\|^2 + \frac{1}{\nu} \left( \frac{4}{\sqrt{\lambda_1}} \right)^{\frac{1}{2}} \|\h\|_{\mathbb{V}'_{\mathrm{div}}} \|\nabla\u^e\|^2 +\frac{6}{ m C_0} \|\nabla \J \|_{\mathbb{L}^1}^2   \|\u^e\|^2.  
			\end{align}
		\end{subequations}		
		Now we estimate the terms in the right hand side of \eqref{3.14}. To estimate $(\u^e\cdot\nabla\varphi_1,\mathcal{B}^{-1}\varphi^e)$, we use an integration by parts, $\u^e\big|_{\partial\Omega}=0$ and the divergence free condition of $\u^e$ to obtain  
		\begin{align*}
		(\u^e\cdot \nabla \varphi_1,\mathcal{B}^{-1}\varphi^e) = -(\u^e\cdot \nabla \mathcal{B}^{-1}\varphi^e,\varphi_1).
		\end{align*}
		Using H\"older's, Ladyzhenskaya's, Poincar\'e's and Young's inequalities, we estimate the above term as
		\begin{align}\label{3.20a}
		|(\u^e\cdot \nabla \mathcal{B}^{-1}\varphi^e,\varphi_1)| & \leq \|\u^e\| \, \|\nabla \mathcal{B}^{-1}\varphi^e\| \, \|\varphi_1\|_{\mathrm{L}^\infty}\nonumber\\
		&\leq  \|\u^e\| \, \|\nabla \mathcal{B}^{-1}\varphi^e \| \nonumber\\
		&\leq \left( \frac{1}{\sqrt{\lambda_1}} \right) \|\nabla\u^e\|\|\nabla \mathcal{B}^{-1}\varphi^e \|\nonumber\\
		&\leq \frac{\nu}{2} \|\nabla\u^e\|^2 +\frac{1}{2\nu \lambda_1} \|\mathcal{B}^{-1/2}\varphi^e \|^2,
		\end{align}
		where $\lambda_1$ is the first eigenvalue of the Stokes operator $\A$.
		Next we estimate $(\u_2\cdot\nabla\varphi^e,\mathcal{B}^{-1}\varphi^e)$ in the following way:
		\begin{align}\label{3.21}
		|(\u_2\cdot\nabla\varphi^e,\mathcal{B}^{-1}\varphi^e)| &\leq |(\u_2\cdot\nabla \mathcal{B}^{-1}\varphi^e,\varphi^e)| \nonumber \\
		&\leq \|\u_2\|_{\mathbb{L}^4} \|\nabla \mathcal{B}^{-1} \varphi^e\|_{\mathbb{L}^4}\|\varphi^e\| \nonumber \\
		&\leq \frac{m C_0}{8}\|\varphi^e\|^2 + \frac{2}{m C_0}\|\nabla \mathcal{B}^{-1}\varphi^e  \|_{\mathbb{L}^4}^2\|\u_2\|_{\mathbb{L}^4}^2 \nonumber \\
		& \leq \frac{m C_0}{8}\|\varphi^e\|^2 + \frac{2C}{m C_0}  \|\nabla \mathcal{B}^{-1}\varphi^e\|_{\mathbb{H}^1}^2 \|\u_2\|_{\mathbb{L}^4}^2
		\end{align}
		where we used the embedding $\mathbb{H}^1 \hookrightarrow \mathbb{L}^4$.	One can see that the $\mathrm{H}^2$-norm of $\zeta$ in $\D(\mathcal{B})$ is equivalent to the $\mathrm{L}^2$-norm of $(\mathcal{B} + \mathrm{I})\zeta $, that is,
		$$\| \zeta \|_{\mathrm{H}^2} \cong \|(\mathcal{B} + \mathrm{I})\zeta\|.$$
		Now since $  \mathcal{B}^{-1}\varphi^e \in \D(\mathcal{B})$ and $\mathcal{B}$ is a linear isomorphism,  we have 
		\begin{align*}
		\|\nabla \mathcal{B}^{-1}\varphi^e\|_{\mathbb{H}^1} \leq  C\|\mathcal{B}^{-1}\varphi^e \|_{\mathrm{H}^2} \leq C \|(\mathcal{B}+\mathrm{I})\mathcal{B}^{-1}\varphi^e\| \leq C \|\varphi^e\|.
		\end{align*}
		Substituting in \eqref{3.21}, using Ladyzhenskaya's inequality and \eqref{gradu estimate} , for $n=2$,we obtain 
		\begin{subequations}
			\begin{align}\label{3.22}
			|(\u_2\cdot\nabla\varphi^e,\mathcal{B}^{-1}\varphi^e)| 
			&\leq \frac{m C_0}{8}\|\varphi^e\|^2 +  \frac{2C}{m C_0} \|\varphi^e\|^2 \, \|\u_2\|_{\mathbb{L}^4}^2  \nonumber \\
			&\leq \frac{m C_0}{8}\|\varphi^e\|^2 + \frac{2 C }{m C_0}\|\varphi^e\|^2 \left( \frac{\sqrt{2}}{\nu ^2\sqrt{\lambda_1}} \right)  \| \h \|^2 _{\mathbb{V}'_{\mathrm{div}}},
			\end{align}
			and for $n=3$, we get 
			\begin{align} \label{s50}
			|(\u_2\cdot\nabla\varphi^e,\mathcal{B}^{-1}\varphi^e )| \leq \frac{m C_0}{8}\|\varphi^e\|^2 + \frac{2C}{m C_0}\|\varphi^e\|^2 \left( \frac{4}{\sqrt{\lambda_1}} \right)^{\frac{1}{2}} \frac{1}{\nu^2}  \| \h \|^2 _{\mathbb{V}'_{\mathrm{div}}} .
			\end{align}
		\end{subequations}
		It is only left to estimate $(\J\ast \varphi^e , \varphi^e)$. 
		Since $\bar{\varphi}=0$ we can write using the fact that $\|B_N^{1/2} \varphi\|^2 = (B_N \varphi,\varphi) = \|\nabla \varphi\|^2$ for all $\varphi \in D(B_N)$
			\begin{align*}
			|(\J*\varphi,\varphi)|&=|(B_N^{1/2}(\J*\varphi-\overline{\J*\varphi}), B_N^{-1/2} \varphi)| \\
			&\leq \| B_N^{1/2}(\J*\varphi-\overline{\J*\varphi})\| \| B_N^{-1/2} \varphi\| \\
			&\leq \|\nabla \J * \varphi\| \| B_N^{-1/2} \varphi\| \leq \|\nabla \J\|_{\L^1} \|\varphi\| \| B_N^{-1/2} \varphi\| \\
			&\leq \frac{C_0}{4} \|\varphi\|^2 + \frac{1}{C_0}\|\nabla \J \|_{\mathbb{L}^1}^2\| B_N^{-1/2} \varphi\|^2
			\end{align*}
			which implies 
			\begin{align}\label{3.23}
			m|(\J*\varphi,\varphi)|\leq \frac{mC_0}{4} \|\varphi\|^2 + \frac{m}{C_0}\|\nabla \J \|_{\mathbb{L}^1}^2\| B_N^{-1/2} \varphi\|^2
			\end{align}
		Combining \eqref{3.20a}, \eqref{3.22} and \eqref{3.23} and substituting it in \eqref{3.14}, for $n=2$,  we infer 
		\begin{subequations}
			\begin{align} \label{3.24a}
			\frac{5m C_0}{8}\|\varphi^e\|^2 & \leq \frac{\nu}{2}\|\nabla\u^e\|^2 + \left( \frac{1}{2\nu \lambda_1}+\frac{m}{C_0}\|\nabla \J \|_{\mathbb{L}^1}^2 \right)\|\mathcal{B}^{-1/2}\varphi^e \|^2  + \frac{2 C }{\nu ^2m C_0} \left( \frac{2}{\lambda_1} \right)^{\frac{1}{2}}  \| \h \|^2 _{\mathbb{V}'_{\mathrm{div}}}\|\varphi^e \|^2 .
			\end{align}
			Combining \eqref{3.20a}, \eqref{s50} and \eqref{3.23} and substituting in \eqref{3.14}, for $n=3$, we find 
			\begin{align}\label{3.24b}
			\frac{5m C_0}{8}  \|\varphi^e\|^2 &\leq  \frac{\nu}{2}\|\nabla\u^e\|^2 + \left( \frac{1}{2\nu \lambda_1}+\frac{m}{C_0}\|\nabla \J \|_{\mathbb{L}^1}^2 \right) \|\mathcal{B}^{-1/2}\varphi^e\|^2 + \frac{2 C }{\nu^2 m C_0} \left( \frac{4}{\sqrt{\lambda_1}} \right)^{\frac{1}{2}}\| \h \|^2_{\mathbb{V}'_{\mathrm{div}}}  \|\varphi^e\|^2 .
			\end{align}
		\end{subequations}
		Adding \eqref{3.19a} and \eqref{3.24a}, for $n=2$, we obtain 
		\begin{subequations}
			\begin{align}
			&\frac{\nu}{2}  \|\nabla\u^e\|^2 + \frac{m C_0}{4}\|\varphi^e\|^2 \leq \frac{1}{\nu}\left( \frac{2}{\lambda_1} \right)^{\frac{1}{2}}  \|\h\|_{\mathbb{V}'_{\mathrm{div}}} \|\nabla\u^e\|^2 + \frac{6}{ m C_0} \| \nabla \J\|_{\mathbb{L}^1} ^2 \|\u^e\|^2 \no \\
			& \hspace{4cm}+ \left( \frac{1}{2 \nu \lambda_1}+\frac{m}{C_0}\|\nabla \J \|_{\mathbb{L}^1}^2 \right) \|\mathcal{B}^{-1/2}\varphi^e \|^2 + \frac{2 C }{\nu ^2m C_0} \left( \frac{2}{\lambda_1} \right)^{\frac{1}{2}}  \| \h \|^2 _{\mathbb{V}'_{\mathrm{div}}}\|\varphi^e\|^2. 
			\end{align}
			Combining \eqref{3.19b} and \eqref{3.24b}, for $n =3$, we get 
			\begin{align}
			& \frac{\nu}{2} \|\nabla\u^e\|^2 + \frac{m C_0}{4} \|\varphi^e\|^2 \leq \frac{1}{\nu} \left( \frac{4}{\sqrt{\lambda_1}} \right)^{\frac{1}{2}} \|\h\|_{\mathbb{V}'_{\mathrm{div}}}\|\nabla\u^e\|^2 +\frac{6}{m C_0} \| \nabla \J\|_{\mathbb{L}^1}^2 \|\u^e\|^2\no \\
			&\hspace{4cm}+ \left( \frac{1}{2 \nu\lambda_1} + \frac{m}{C_0}\|\nabla \J\|^2_{\mathbb{L}^1}\right) \|\mathcal{B}^{-1/2}\varphi^e \|^2 + \frac{2C }{\nu^2 m C_0} \left( \frac{4}{\sqrt{\lambda_1}} \right)^{\frac{1}{2}}\| \h \|^2_{\mathbb{V}'_{\mathrm{div}}} \|\varphi^e\|^2 .
			\end{align}
		\end{subequations}
		Now using  the continuous embedding of $\mathrm{L}^2_{(0)} \hookrightarrow \mathrm{V}_0'$, that is, $ \|\mathcal{B}^{-1/2}\varphi^e \|^2 \leq C \| \varphi^e \|^2$ and Poincar\'e's  inequality, we further obtain
		\begin{align}\label{3.25}
		&\left[ \lambda_1 \left( \frac{\nu}{2} - \frac{1}{\nu}\left( \frac{2}{\lambda_1} \right)^{\frac{1}{2}}  \|\h\|_{\mathbb{V}'_{\mathrm{div}}} \right) - \frac{6}{ m C_0} \|\nabla \J \|_{\mathbb{L}^1}^2 \right] \| \u^e\|^2 \no \\
		& +\Bigg[\frac{mC_0}{4}  -\frac{mC}{C_0}\|\nabla \J\|^2_{\mathbb{L}^1}- \frac{C}{2 \nu \lambda_1}  
		- \frac{2C }{\nu ^2m C_0} \left( \frac{2}{\lambda_1} \right)^{\frac{1}{2}}  \| \h \|^2 _{\mathbb{V}'_{\mathrm{div}}} \Bigg]\|\varphi^e\|^2\leq 0.
		\end{align}	
		From \eqref{3.25}, uniqueness for $n= 2$ follows provided quantities in both brackets in above inequality are strictly positive. Thus we conclude that for
		\begin{align*}
		& (i) \  \nu^2 >  \frac{2\sqrt{2}}{\sqrt{\lambda_1}} \|\h\|_{\mathbb{V}'_{\mathrm{div}}}  + \frac{12 \nu }{ \lambda_1m C_0} \|\nabla \J \|_{\mathbb{L}^1}^2,\\
		&(ii) \ (\nu m)^2 \left( \frac{C_0}{4} -\frac{C}{C_0} \|\nabla \J\|^2_{\mathbb{L}^1} \right)>  \nu m  \left(\frac{C}{\lambda_1}\right) + \frac{2 C }{ C_0} \left( \frac{2}{\lambda_1} \right)^{\frac{1}{2}}  \| \h \|^2 _{\mathbb{V}'_{\mathrm{div}}} \ \text{ with }\ \|\nabla\mathrm{J}\|^2_{\mathbb{L}^1}<\frac{C^2_0}{4C},
		\end{align*}
		uniqueness follows in two dimensions. Similarly for $n=3$, we obtain
		\begin{align*}
		& \left[ \lambda_1 \Bigg(\frac{\nu}{2 }-\frac{1}{\nu} \left( \frac{4}{\sqrt{\lambda_1}} \right)^{\frac{1}{2}} \|\h\|_{\mathbb{V}'_{\mathrm{div}}}  \Bigg)-\frac{6}{ m C_0} \|\nabla \J \|_{\mathbb{L}^1}^2 \right] \| \u^e\|^2 \no \\
		&+\Bigg[ \frac{mC_0}{4} - \frac{mC}{C_0} \|\nabla \J\|_{\mathbb{L}^1}^2 - \frac{C}{2\nu \lambda_1} - \frac{2 C }{\nu^2 m C_0} \left( \frac{4}{\sqrt{\lambda_1}} \right)^{\frac{1}{2}}\| \h \|^2_{\mathbb{V}'_{\mathrm{div}}} \Bigg] \|\varphi^e \|^2 \leq 0.
		\end{align*}
		Hence, the uniqueness follows provided
		\begin{align*}
		&(i)\ \nu^2  >  \left( \frac{16}{\sqrt{\lambda_1}} \right)^{\frac{1}{2}}  \|\h\|_{\mathbb{V}'_{\mathrm{div}}} +\frac{12 \nu }{ \lambda_1m C_0} \|\nabla \J \|_{\mathbb{L}^1}^2,  \\
		&(ii)\  (\nu m)^2 \left( \frac{C_0}{4} -\frac{C}{C_0} \|\nabla \J\|_{\mathbb{L}^1}^2 \right)>   \nu m  \left(\frac{C}{\lambda_1}\right) + \frac{2 C }{ C_0} \left( \frac{4}{\sqrt{\lambda_1}} \right)^{\frac{1}{2}}  \| \h \|^2 _{\mathbb{V}'_{\mathrm{div}}},  \ \text{ with }\ \|\nabla\mathrm{J}\|^2_{\mathbb{L}^1}<\frac{C^2_0}{4C},
		\end{align*}	
		which completes the proof.
	\end{proof}

		\subsection{Regularity of the weak solution}
		In this subsection, we establish the regularity results for the weak solution to the system \eqref{3.2}. Let $(\mathbf{u}, \mu, \varphi ) \in \V_{\mathrm{div}} \times \mathrm{V} \times (\mathrm{V}\cap \L^2_{(k)}(\Omega))  $ and $\varphi(x) \in (-1,1) \ a.e.$ be the unique weak solution of the system \eqref{3.2}. In the next theorem, we prove  the higher order regularity results for the system \eqref{3.2}. 
		\begin{theorem}\label{regularity}
			Let $\mathbf{h}\in \G_{\mathrm{div}}$ and the assumptions of Theorem \ref{unique-steady} are satisfied. Then  the weak solution $(\mathbf{u}, \mu, \varphi)$ of the system \eqref{3.2} satisfies: 
			\begin{align}\label{s3}
			\mathbf{u}\in\mathbb{H}^2(\Omega),\ \ \mu \in \mathrm{H}^2(\Omega) \  \text{ and } \ \varphi \in \W^{1,p} (\Omega),
			\end{align}
			where $2\leq p < \infty$ if $n=2$ and $2 \leq p \leq 6$ if $n=3.$ 
		\end{theorem}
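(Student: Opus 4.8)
The plan is to split the regularity question into two almost independent bootstraps: one for the Cahn--Hilliard pair $(\mu,\varphi)$ governed by the elliptic regularity of the Neumann Laplacian (Lemma \ref{Lp_reg}), and one for the velocity $\mathbf{u}$ governed by the Stokes operator $\A$; the only extra ingredient compared with the existence theorem is the improved datum $\mathbf{h}\in\G_{\mathrm{div}}\subset\mathbb{L}^2$ together with the pointwise bound $|\varphi(x)|\leq 1$. The algebraic observation driving the $\varphi$-bootstrap comes from differentiating the second equation of \eqref{3.2}: using $\nabla(a\varphi)=\varphi\nabla a+a\nabla\varphi$ and $\nabla(\J*\varphi)=(\nabla\J)*\varphi$ one obtains the identity $\nabla\mu=(a+\F''(\varphi))\nabla\varphi+\varphi\nabla a-(\nabla\J)*\varphi$. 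Since $a+\F''(\varphi)\geq C_0>0$ by (A9), this gives the pointwise bound $|\nabla\varphi|\leq C_0^{-1}(|\nabla\mu|+|\varphi\nabla a|+|(\nabla\J)*\varphi|)$ a.e., and because $\nabla a=(\nabla\J)*\mathbf{1}_\Omega$ and $(\nabla\J)*\varphi$ both lie in $\mathbb{L}^\infty$ (Young's convolution inequality with $\nabla\J\in\mathbb{L}^1$ and $\varphi\in\mathrm{L}^\infty$), we conclude that $\nabla\varphi$ inherits the integrability of $\nabla\mu$: $\nabla\mu\in\mathbb{L}^r\Rightarrow\nabla\varphi\in\mathbb{L}^r$. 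Crucially only the lower bound on $\F''$ is used, so the singularity of $\F$ near $\pm1$ is harmless.

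\textbf{Step 1 ($\mu$ and $\varphi$).} I would read the first equation of \eqref{3.2} as the Neumann problem $-\Delta\mu+\mu=\mu-\tfrac1m\,\mathbf{u}\cdot\nabla\varphi=:g$ and invoke Lemma \ref{Lp_reg}(ii): $g\in\mathrm{L}^p\Rightarrow\mu\in\mathrm{W}^{2,p}$. Starting from $\mathbf{u}\in\V_{\mathrm{div}}$ (hence $\mathbf{u}\in\mathbb{L}^4$ for $n=2$, $\mathbf{u}\in\mathbb{L}^6$ for $n=3$) and $\nabla\varphi\in\mathbb{L}^2$, Hölder gives $\mathbf{u}\cdot\nabla\varphi\in\mathrm{L}^{4/3}$ ($n=2$) resp. $\mathrm{L}^{3/2}$ ($n=3$); since $\mu\in\mathrm{H}^1$ also $g$ lies in these spaces, so $\nabla\mu\in\mathrm{W}^{1,4/3}\hookrightarrow\mathbb{L}^4$ resp. $\mathrm{W}^{1,3/2}\hookrightarrow\mathbb{L}^3$, and the identity above upgrades $\nabla\varphi$ to $\mathbb{L}^4$ resp. $\mathbb{L}^3$. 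Feeding this back, $\mathbf{u}\cdot\nabla\varphi\in\mathrm{L}^2$ in both dimensions (for $n=3$, $\tfrac16+\tfrac13=\tfrac12$), so Lemma \ref{Lp_reg}(ii) gives $\mu\in\mathrm{H}^2$. Finally $\mu\in\mathrm{H}^2\hookrightarrow\mathrm{L}^\infty$ and $\nabla\mu\in\mathbb{H}^1$, whence $\nabla\mu\in\mathbb{L}^p$ for all $p<\infty$ when $n=2$ and $\nabla\mu\in\mathbb{L}^6$ when $n=3$; the identity then yields $\varphi\in\mathrm{W}^{1,p}$ in exactly the stated ranges.

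\textbf{Step 2 ($\mathbf{u}$).} Now $\mu\in\mathrm{L}^\infty$ gives $\mu\nabla\varphi\in\mathbb{L}^2$, and $\mathbf{h}\in\mathbb{L}^2$. Applying the Helmholtz projection $\mathrm{P}$ to the momentum equation yields $\nu\A\mathbf{u}=\mathrm{P}(\mu\nabla\varphi)+\mathrm{P}\mathbf{h}-\mathrm{P}[(\mathbf{u}\cdot\nabla)\mathbf{u}]$; testing with $\A\mathbf{u}$ and using $\|\mathbf{u}\|_{\mathbb{H}^2}\cong\|\A\mathbf{u}\|$ on $\D(\A)$ gives $\nu\|\A\mathbf{u}\|^2\leq(\|\mu\|_{\mathrm{L}^\infty}\|\nabla\varphi\|+\|\mathbf{h}\|)\|\A\mathbf{u}\|+\|(\mathbf{u}\cdot\nabla)\mathbf{u}\|\,\|\A\mathbf{u}\|$. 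For the nonlinear term I would use $\|(\mathbf{u}\cdot\nabla)\mathbf{u}\|\leq\|\mathbf{u}\|_{\mathbb{L}^4}\|\nabla\mathbf{u}\|_{\mathbb{L}^4}$ together with Ladyzhenskaya's inequality and the Gagliardo--Nirenberg estimates \eqref{gu} ($n=2$) and \eqref{gua} ($n=3$), which bound $\|\nabla\mathbf{u}\|_{\mathbb{L}^4}$ by $\|\A\mathbf{u}\|^{3/4}$ resp. $\|\A\mathbf{u}\|^{7/8}$ times lower-order norms. Consequently $\|(\mathbf{u}\cdot\nabla)\mathbf{u}\|\,\|\A\mathbf{u}\|$ is bounded by $\|\A\mathbf{u}\|^{7/4}$ ($n=2$) resp. $\|\A\mathbf{u}\|^{15/8}$ ($n=3$) times powers of the already-known norms $\|\mathbf{u}\|,\|\nabla\mathbf{u}\|$. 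Both exponents being strictly below $2$, Young's inequality absorbs these contributions (along with the first two terms) into $\tfrac{\nu}{2}\|\A\mathbf{u}\|^2$, leaving a finite bound depending only on $\|\mathbf{u}\|,\|\nabla\mathbf{u}\|,\|\mu\|_{\mathrm{L}^\infty},\|\nabla\varphi\|$ and $\|\mathbf{h}\|$. Hence $\A\mathbf{u}\in\mathbb{L}^2$, i.e. $\mathbf{u}\in\D(\A)=\mathbb{H}^2(\Omega)\cap\V_{\mathrm{div}}$.

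The main obstacle I anticipate is making Step 2 rigorous, since testing with $\A\mathbf{u}$ presupposes the very membership $\mathbf{u}\in\D(\A)$ that is to be proved, and the convective term is supercritical for this test. I would remove the circularity in one of two equivalent ways: either carry out the estimate on the spectral Galerkin approximation in the eigenfunctions $\mathbf{e}_j$ of $\A$, where the bound is uniform and survives the limit, or run a short preliminary $\mathrm{L}^r$-Stokes bootstrap --- $(\mathbf{u}\cdot\nabla)\mathbf{u}\in\mathbb{L}^{4/3}$ ($n=2$) resp. $\mathbb{L}^{3/2}$ ($n=3$) forces $\mathbf{u}\in\mathbb{W}^{2,4/3}\hookrightarrow\mathbb{L}^\infty$ resp. $\mathbf{u}\in\mathbb{W}^{2,3/2}\hookrightarrow\mathbb{W}^{1,3}$, after which $(\mathbf{u}\cdot\nabla)\mathbf{u}\in\mathbb{L}^2$ directly and Stokes regularity gives $\mathbf{u}\in\mathbb{H}^2$ unconditionally. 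By comparison the $\varphi$-bootstrap is routine once the gradient identity is in hand; the only subtlety there is that the chain rule $\nabla\F'(\varphi)=\F''(\varphi)\nabla\varphi$ is applied on the set where $\varphi\in(-1,1)$, which is all of $\Omega$ up to a null set since $|\varphi|<1$ a.e.
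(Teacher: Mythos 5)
Your proposal is correct, but it inverts the paper's bootstrap and replaces both of its key devices, so it is a genuinely different argument. The paper proves $\u\in\mathbb{H}^2$ \emph{first}: it works with the rewritten momentum equation \eqref{nonlin u rewritten}, in which the Korteweg force $\mu\nabla\varphi$ is replaced, modulo a gradient absorbed into the pressure, by $-\nabla a\frac{\varphi^2}{2}-(\J\ast\varphi)\nabla\varphi$; this force lies in $\mathbb{L}^2$ using only $|\varphi|\le 1$ and $\nabla\varphi\in\mathbb{L}^2$, so the standard regularity theory for the steady Navier--Stokes system (cited from Temam) yields $\u\in\mathbb{H}^2$ in one stroke --- exactly the $L^r$-Stokes/Galerkin work of your Step 2 is thereby outsourced to a citation, and no prior information on $\mu$ is needed. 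Then $\u\in\mathbb{H}^2\hookrightarrow\mathbb{L}^\infty$ gives $\u\cdot\nabla\varphi\in\mathrm{L}^2$, and Lemma \ref{Lp_reg} with $p=2$ gives $\mu\in\mathrm{H}^2$ directly, with no intermediate $\mathrm{W}^{2,4/3}$ (resp. $\mathrm{W}^{2,3/2}$) stage; your route needs that extra elliptic step precisely because you start the $\mu$-bootstrap from $\u\in\V_{\mathrm{div}}$ alone, and in exchange you must treat $\mu\nabla\varphi$ head-on via $\mu\in\mathrm{L}^\infty$ rather than via the paper's pressure-absorption trick. For the final step $\varphi\in\W^{1,p}$, the paper tests the differentiated $\mu$-equation against $\nabla\varphi|\nabla\varphi|^{p-2}$ and runs an integral Young-inequality argument (\eqref{s5}--\eqref{s7}), whereas you use the pointwise domination $|\nabla\varphi|\le C_0^{-1}\left(|\nabla\mu|+|\varphi|\,|\nabla a|+|(\nabla\J)\ast\varphi|\right)$; your version is more elementary, rests on the same two ingredients (assumption (A9) and Young's inequality for convolutions), and yields the same ranges of $p$ in both dimensions. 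Both arguments share the same glossed-over technical point, namely that the chain rule $\nabla\F'(\varphi)=\F''(\varphi)\nabla\varphi$ must be justified for the singular potential on the set where $|\varphi|<1$ (you at least flag this); and your worry about the circularity of testing with $\A\u$ is legitimate --- your $L^r$-bootstrap fix is sound, it simply re-proves the steady Navier--Stokes regularity theorem that the paper invokes as a black box.
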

		\begin{proof}
			\textbf{Step 1:}	Let $ \h \in \G_{\mathrm{div}} $. Then from Theorem \ref{mainthm} and Theorem \ref{unique-steady}, we know that there exists a unique weak solution $ (\u, \varphi, \mu) $ satisfying  
			\begin{align}
			\u \in \V_{\mathrm{div}}, \varphi \in \mathrm{V} \cap \mathrm{L}^2_{(k)} \ \mathrm{and} \ \mu \in \mathrm{V}. \label{reg1}
			\end{align}
			Now, we recall the  form \eqref{nonlin u rewritten} that is
			\begin{align} \label{reg2}
			\nu (\nabla \u, \nabla \v) + b(\u,\u, \v)  = - \left< \nabla a\frac{{\varphi}^2}{2} - (\J\ast \varphi)\nabla \varphi , \v \right>+ \langle \h,\v \rangle, \ \text{ for all }\ \v \in \V_{\mathrm{div}},
			\end{align} 
			where $\nabla a(x) = \int_\Omega \nabla \J(x-y) \d y$. Using the weak regularity \eqref{reg1}, $ |\varphi (x)| \leq 1 \ $ and Young's inequality for convolution, we observe that
			\begin{align*}
			\| (\nabla a) \varphi^2\|& \leq \|\nabla a\|_{\mathbb{L}^{\infty}}\|\varphi\|_{\mathrm{L}^4}^2\leq  C ,\\ 
			\|\J\ast \varphi \nabla \varphi\|& \leq \|\J\|_{\mathrm{L}^1}\|\varphi\|_{\mathrm{L}^\infty}\|\nabla \varphi\| \leq C.
			\end{align*} 
			Hence, the right hand side of \eqref{reg2} belongs to $\mathbb{L}^2(\Omega)$. Then from the regularity of the steady state Navier-Stokes equations (see Chapter II, \cite{MR0609732}) we have that
			\begin{align}
			\u \in \H^2(\Omega). \label{reg4}
			\end{align}
			\vskip 0.1cm \noindent
			\textbf{Step 2:} From the first equation of \eqref{3.2} we have 
			\begin{align}
			m ( \nabla \mu, \nabla \psi) = (\u \cdot \nabla \varphi, \psi),\ \text{ for all }\ \psi \in \mathrm{V}. \label{reg3}
			\end{align}
			Using the Sobolev inequality, \eqref{reg4} and \eqref{reg1}, we observe that
			\begin{align}
			\|\u \cdot \nabla \varphi \| \leq C\|\u\|_{\mathbb{L}^\infty}\|\nabla \varphi\| \leq C \|\u\|_{\H^2} \|\nabla \varphi\| \leq C.
			\end{align}
			Since we have that $\frac{\partial \mu }{\partial \n}=0$, from the $L^p$ regularity of the Neumann Laplacian (see Lemma \ref{Lp_reg}) for $p=2$, we conclude that 
			\begin{align} \label{reg8}
			\mu \in \mathrm{H}^2(\Omega). 
			\end{align}
			\vskip 0.1cm \noindent
			\textbf{Step 3:} An application of the Gagliardo-Nirenberg	inequality together with \eqref{reg8} gives
			\begin{align}\label{s9}
			\|\nabla\mu\|_{\mathbb{L}^p}&\leq C\|\mu\|_{\mathrm{H}^2}^{1-\frac{1}{p}}\|\mu\|^{\frac{1}{p}} < \infty,
			\end{align}	
			for $2\leq p < \infty$ in case of $n=2$ and 
			\begin{align} \label{gradmu_n3}
			\|\nabla\mu\|_{\mathbb{L}^p} &  \leq C\|\mu\|_{\mathrm{H}^2}^{\frac{5p-6}{4p}}\|\mu\|^{\frac{6-p}{4p}} < \infty,
			\end{align}	
			for $2 \leq p \leq 6$ and $n=3.$	
			Let us  take the gradient of $\mu= a\varphi- \mathrm{J}*\varphi + \mathrm{F}'(\varphi)$, multiply it by $\nabla\varphi|\nabla\varphi|^{p-2}$, integrate the resulting identity over $\Omega$,  and use (A9),  H\"older's and Young's inequalities and Young's inequality for convolution to obtain
			\begin{align}\label{s5}
			\int_{\Omega}\nabla\varphi|\nabla\varphi|^{p-2}\cdot \nabla\mu\d x&=\int_{\Omega}\nabla\varphi|\nabla\varphi|^{p-2}\cdot\left(a\nabla\varphi+\nabla a\varphi-\nabla\mathrm{J}*\varphi+\mathrm{F}''(\varphi)\nabla\varphi\right)\d x\nonumber\\
			&=\int_{\Omega}(a+\mathrm{F}''(\varphi))|\nabla\varphi|^p\d x+\int_{\Omega}\nabla\varphi|\nabla\varphi|^{p-2}\cdot\left(\nabla a\varphi-\nabla\mathrm{J}*\varphi\right)\d x\nonumber\\
			&\geq C_0\int_{\Omega}|\nabla\varphi|^p\d x-\left(\|\nabla a\|_{\mathbb{L}^{\infty}}+\|\nabla\mathrm{J}\|_{\mathbb{L}^1}\right) \|\varphi \|_{\mathrm{L}^p} \| \nabla \varphi \|_{\mathbb{L}^p}^{p-1} \nonumber\\
			&\geq \frac{C_0}{2}\|\nabla\varphi\|_{\mathbb{L}^p}^p -\frac{1}{p}\left(\frac{2(p-1)}{C_0p}\right)^{p-1}\left(\|\nabla a\|_{\mathbb{L}^{\infty}}+\|\nabla\mathrm{J}\|_{\mathbb{L}^1}\right)^p\|\varphi\|_{\mathbb{L}^p}^p.
			\end{align}
			Using Young's inequality, we also have 
			\begin{align}\label{s6}
			\left|	\int_{\Omega}\nabla\varphi|\nabla\varphi|^{p-2}\cdot \nabla\mu\d x\right|&\leq \int_{\Omega}|\nabla\varphi|^{p-1}|\nabla\mu|\d x\nonumber\\&\leq \frac{C_0}{4}\|\nabla\varphi\|_{\mathbb{L}^p}^p+\frac{1}{p}\left(\frac{4(p-1)}{C_0p}\right)^{p-1}\|\nabla\mu\|_{\mathbb{L}^p}^p.
			\end{align}
			Combining \eqref{s5} and \eqref{s6}, we get 
			\begin{align}
			\label{s7}
			\frac{C_0}{4}\|\nabla\varphi\|_{\mathbb{L}^p}^p\leq\frac{1}{p}\left(\frac{2(p-1)}{C_0p}\right)^{p-1}\left[2^{p-1}\|\nabla\mu\|_{\mathbb{L}^p}^p+ \left(\|\nabla a\|_{\mathbb{L}^{\infty}}+\|\nabla\mathrm{J}\|_{\mathbb{L}^1}\right)^p\|\varphi\|_{\mathrm{L}^p}^p\right].
			\end{align}
			Using \eqref{s9} and \eqref{gradmu_n3} we get 
			\begin{align*}
			\|\nabla\varphi\|_{\mathbb{L}^p} < \infty
			\end{align*}
			for $2\leq p < \infty$ if $n=2$, and $2 \leq p \leq 6$ if $n=3.$ 
		\end{proof}

	\section{Exponential Stability}\label{se4}\setcounter{equation}{0}
	The stability analysis of non-linear dynamical systems has a long history starting from the works of Lyapunov. For the solutions of ordinary or partial differential equations describing dynamical systems, different kinds of stability may be described. One of the most important types of stability is that concerning the stability of solutions near to a point of equilibrium (stationary solutions).  In the qualitative theory of ordinary and partial differential equations, and control theory, Lyapunov’s notion of (global) asymptotic stability of an equilibrium is a key concept. It is important to note that the asymptotic stability do not quantify the rate of convergence. In fact, there is a strong form of stability which demands an exponential rate of convergence. The notion of exponential stability is far stronger and it assures a minimum rate of decay, that is, an estimate of how  fast the solutions converge to its equilibrium.  In particular, exponential stability implies uniform asymptotic stability. Stability analysis of fluid dynamic models has been one of the essential areas of applied mathematics with a good number of applications in engineering and physics (cf. \cite{MR0609732, MR3186318}, etc). 
	
	In this section, we consider the singular potential $\F$ in $(-1,1)$ of the form: 
	\begin{align*}
	\F (\varphi) = \frac{\theta}{2} ((1+ \varphi) \log (1+\varphi) + (1-\varphi) \log (1-\varphi))  ,\quad \varphi \in (-1,1).
	\end{align*}
	Observe that in this case, the potential $\F$ is convex and $\kappa =0$ in \eqref{decomp of F}. For such potentials, we prove that the stationary solution $(\mathbf{u}^e,\varphi^e)$  of the system (\ref{steadysys}) with constant mobility parameter $m$ and constant coefficient of kinematic viscosity $\nu$ is  exponentially stable in 2-D. That is, our aim is to establish  that:
	\begin{itemize}
		\item 
		there exists constants $M>0$ and $\alpha>0$ such that 
		\begin{align*}
		\|\u(t)-\mathbf{u}^e\|^2+\|\varphi (t)-\varphi^e\|^2\leq M e^{-\alpha t},
		\end{align*} 
		for all $t\geq 0$. 
	\end{itemize}

	\subsection{Global solvability of two dimensional CHNS system} We consider the following initial-boundary value problem:
	\begin{equation}\label{4.1}
	\left\{
	\begin{aligned}
	\varphi_t+ \mathbf{u} \cdot \nabla \varphi &= m\Delta \mu, \ \text{ in }\ \Omega\times(0,T), \\
	\mu &= a \varphi - \mathrm{J}*\varphi + \mathrm{F}'(\varphi) \\
	\mathbf{u}_t -\nu \Delta \mathbf{u} + (\mathbf{u} \cdot \nabla) \mathbf{u }+ \nabla \uppi &= \mu \nabla \varphi +\mathbf{ h}, \ \text{ in }\ \Omega\times(0,T),\\
	\text{div }\mathbf{u}& = 0, \ \text{ in }\ \Omega\times(0,T), \\
	\frac{\partial \mu}{ \partial \mathbf{n}}=0,& \ \mathbf{u}=0, \ \text{ on }\ \partial\Omega\times[0,T], \\
	\mathbf{u}(0) = \mathbf{u}_0,& \quad \varphi(0)=\varphi_0, , \ \text{ in }\ \Omega.
	\end{aligned}
	\right.
	\end{equation}
	Let us now give the global solvability results available in the literature for the system \eqref{4.1}. We first give the definition of a weak solution for the system \eqref{4.1}
	\begin{definition}[weak solution, \cite{MR3019479}]
		Let $\u_0\in\G_{\mathrm{div}}$, $\varphi_0\in \mathrm{H}$ with $\F(\varphi_0)\in\mathrm{L}^1(\Omega)$ and $0<T<\infty$ be given. A couple $(\u,\varphi)$ is a \emph{weak solution} to the system  \eqref{4.1} on $[0,T]$ corresponding to $ [\u_0,\varphi_0]$ if 
		\begin{itemize}
			\item [(i)] $\u,\varphi$ and $\mu$ satisfy 
			\begin{equation}\label{sol}
			\left\{
			\begin{aligned}
			&	\u \in \mathrm{L}^{\infty}(0,T;\G_{\mathrm{div}}) \cap \mathrm{L}^2(0,T;\V_{\mathrm{div}}),  \\ 
			&	\u_t \in \mathrm{L}^{4/3}(0,T;\V_{\mathrm{div}}'),\text{ if } d=3,  \\
			&	\u_t \in \mathrm{L}^{2}(0,T;\V_{\mathrm{div}}'),\text{ if } d=2,  \\
			&	\varphi \in \mathrm{L}^{\infty}(0,T;\mathrm{H}) \cap \mathrm{L}^2(0,T;\mathrm{V}),   \\
			&	\varphi_t \in \mathrm{L}^{2}(0,T;\mathrm{V}'), \\
			& \mu = a \varphi - \J*\varphi + \F'(\varphi) \in L^2(0,T;\mathrm{V}),
			\end{aligned}
			\right.
			\end{equation}
			and 
			$$\varphi \in \L^\infty (Q), \quad |\varphi (x,t)| <1 \ a.e. \ (x,t) \in Q:=\Omega \times (0,T);$$
			\item [(ii)]  For every $\psi\in\mathrm{V}$, every $\v \in \V_{\mathrm{div}}$ and for almost any $t\in(0,T)$, we have
			\begin{align}
			\langle \varphi_t,\psi\rangle + m(\nabla \mu, \nabla \psi) &=\int_{\Omega}(\u\cdot\nabla\psi)\varphi\d x,\\
			\langle \u_t,\v\rangle +\nu(\nabla\u,\nabla\v)+b(\u,\u,\v)&=-\int_{\Omega}(\varphi\cdot\nabla\mu)\v\d x+\langle\h,\v\rangle.
			\end{align}
			\item [(iii)] The initial conditions 
			$\u(0)=\u_0,\ \varphi(0)=\varphi_{0}$ hold in the weak sense.
		\end{itemize}
	\end{definition}
	Next, we discuss the existence and uniqueness of weak solution results available in the literature for  the system  \eqref{4.1}.
	\begin{theorem}[Existence, Theorem 1 \cite{MR3019479}]\label{exist}
		Assume that (A1)-(A7) are satisfied for some fixed positive integer $q$.  Let $\u_0 \in \G_{\mathrm{div}}$, $\varphi_0 \in \mathrm{L}^\infty(\Omega)$ such that $\F(\varphi_0) \in \mathrm{L}^1(\Omega)$ and $\mathbf{h} \in \mathrm{L}^2_{\text{loc}}([0,\infty), \V_{\mathrm{div}}')$. In addition, assume that $|\overline{\varphi_0}|<1$. Then, for every given $T>0$, there exists a weak solution $(\u,\varphi)$ to the equation \eqref{4.1} on $[0,T]$ such that $\overline{\varphi}(t) = \overline{\varphi_0}$ for all $t \in [0,T]$ and 
		\begin{align*}
		\varphi \in \L^\infty (0,T; \L^{2+2q}(\Omega)).
		\end{align*}
		Furthermore, setting
		\begin{align}\mathscr{E}(\u(t),\varphi(t)) = \frac{1}{2} \|\u(t)\|^2 + \frac{1}{4} \int_\Omega \int_\Omega \J(x-y) (\varphi(x,t) - \varphi(y,t))^2 \d x \d y + \int_\Omega \F(\varphi(t))\d x,\end{align}
		the following energy estimate holds 
		\begin{align}\label{energy}
		\mathscr{E}(\u(t),\varphi(t)) + \int_s^t \left(2  \|\sqrt{\nu (\varphi)} D \u(s)\|^2 + m\| \nabla\mu(s) \|^2 \right)\d s \leq \mathscr{E}(\u(s),\varphi(s)) + \int_s^t \langle \mathbf{h}(s), \u(s) \rangle\d s,
		\end{align}
		for all $t \geq s$ and for a.a. $s \in (0,\infty)$.
		If $d=2$, the weak solution $(\u,\varphi)$ satisfies the following energy identity,
		$$\frac{\d}{\d t}\mathscr{E}(\u(t),\varphi(t)) + 2\|\sqrt{\nu (\varphi)} D \u(t) \|^2+ m\| \nabla \mu(t) \|^2 = \langle \mathbf{h}(t) , \u(t) \rangle,$$
		that is, the equality in \eqref{energy} holds for every $t \geq 0.$ 
	\end{theorem}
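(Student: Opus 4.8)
The plan is to obtain the solution by a double approximation: first remove the singularity of $\F'$ near $\pm 1$, then solve each regularized system by a Faedo--Galerkin scheme. Concretely, I would replace $\F$ by a family $\F_\lambda\in\C^2(\R)$ (for instance a Moreau--Yosida regularization of the convex part $\mathrm{G}$, so that $\F_\lambda'$ is globally Lipschitz and $\F_\lambda\nearrow\F$), and project the momentum equation onto $\mathrm{span}\{\mathbf{e}_1,\dots,\mathbf{e}_k\}$ using the Stokes eigenfunctions and the phase equation onto the eigenfunctions of the Neumann operator $\mathcal{B}$. The resulting finite system of ODEs has a local solution by Carath\'eodory's theorem, extended globally by the a priori bounds below. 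Conservation of the mean, $\overline{\varphi}(t)=\overline{\varphi_0}$, is immediate from testing the phase equation with the constant $1$, since the flux and convective terms integrate to zero.

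The estimates come from the natural energy structure. Testing the phase equation with $\mu$ and the momentum equation with $\u$ and adding, the coupling terms $(\u\cdot\nabla\mu,\varphi)$ and $(\varphi\nabla\mu,\u)$ cancel, giving at the Galerkin level
\begin{align*}
\frac{\d}{\d t}\mathscr{E}(\u,\varphi) + 2\|\sqrt{\nu(\varphi)}\,\mathrm{D}\u\|^2 + m\|\nabla\mu\|^2 = \langle\h,\u\rangle .
\end{align*}
Estimating the right-hand side by Young's inequality and using (A2), (A8) together with the coercivity of $\mathscr{E}$ coming from (A6)--(A7), I obtain uniform bounds for $\u$ in $\mathrm{L}^{\infty}(0,T;\G_{\mathrm{div}})\cap\mathrm{L}^2(0,T;\V_{\mathrm{div}})$ and for $\mu$ in $\mathrm{L}^2(0,T;\mathrm{V})$. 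The extra integrability $\varphi\in\mathrm{L}^{\infty}(0,T;\L^{2+2q}(\Omega))$ is extracted from the structural sign conditions (A3)--(A5) on the higher derivatives of $\F_1$, by testing the $\mu$-equation against a suitable odd power of $\varphi$ and absorbing the singular contributions. Bounds on $\u_t$ and $\varphi_t$ in the dual spaces of \eqref{sol} then follow by comparison, the dimension-dependent exponent $4/3$ in $3$D arising from $\|\B(\u,\u)\|_{\V_{\mathrm{div}}'}\lesssim\|\u\|^{1/2}\|\nabla\u\|^{3/2}$.

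With these bounds in hand I would pass to the limit, first $k\to\infty$ and then $\lambda\to0$. The Aubin--Lions--Simon lemma gives strong convergence of $\u$ in $\mathrm{L}^2(0,T;\G_{\mathrm{div}})$ and of $\varphi$ in $\mathrm{L}^2(0,T;\mathrm{H})$, enough to handle the quadratic terms $b(\u,\u,\cdot)$ and the transport coupling. The nonlocal term is in fact the most benign: since convolution against $\J\in\W^{1,1}$ is a smoothing operation (Young's inequality for convolution), $\J\ast\varphi_k\to\J\ast\varphi$ and $a\varphi_k\to a\varphi$ strongly. The genuine difficulty is the singular potential: one must show $\F_\lambda'(\varphi_\lambda)\to\F'(\varphi)$ in $\mathrm{L}^1$ and that the limit obeys $|\varphi(x,t)|<1$ a.e. This is handled via the uniform $\mathrm{L}^2(0,T;\mathrm{V})$ control of $\mu_\lambda$ (hence of $\F_\lambda'(\varphi_\lambda)$ after subtracting the nonlocal part), Fatou's lemma, and the monotonicity of $\mathrm{G}'$, exactly in the spirit of \cite{MR3019479,MR3688414}.

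Finally, the energy estimate \eqref{energy} follows by weak lower semicontinuity of the $\mathrm{L}^2$ norms and of $\mathscr{E}$ under the above convergences, yielding the inequality for a.e.\ $s$ and all $t\ge s$. In two dimensions the improved regularity $\u_t\in\mathrm{L}^2(0,T;\V_{\mathrm{div}}')$ and $\varphi_t\in\mathrm{L}^2(0,T;\mathrm{V}')$ makes the pairings $\langle\u_t,\u\rangle$ and $\langle\varphi_t,\mu\rangle$ well defined for a.e.\ $t$, so the chain rule applies and the energy \emph{identity} holds for every $t\ge0$. Throughout, the main obstacle is the rigorous passage to the limit in $\F_\lambda'(\varphi_\lambda)$ together with the recovery of the pointwise constraint $|\varphi|<1$; once this is secured, the remaining steps are the by-now-standard compactness and lower-semicontinuity arguments.
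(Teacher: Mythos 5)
This statement is not proved in the paper at all: it is imported verbatim as Theorem~1 of \cite{MR3019479} and used as a black box, so there is no internal proof to compare your attempt against. Your sketch --- regularization of the singular potential, Faedo--Galerkin approximation, the cancellation of the coupling terms $(\u\cdot\nabla\mu,\varphi)$ and $(\varphi\nabla\mu,\u)$ yielding the energy identity, uniform a priori bounds, Aubin--Lions compactness, and the monotonicity/Fatou argument to pass to the limit in $\F_\lambda'(\varphi_\lambda)$ and recover $|\varphi|<1$ a.e. --- is precisely the strategy of that cited work (which approximates the singular potential by regular ones and leverages the regular-potential existence theory), so your approach coincides in all essentials with the actual source proof.
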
 
	
	\begin{theorem}[Uniqueness, Theorem 3, \cite{MR3518604}]\label{unique}
		Let $d=2$ and assume that (A1)-(A7) are satisfied for some fixed positive integer $q$.  Let $\u_0 \in \G_{\mathrm{div}}$, $\varphi_0 \in \mathrm{L}^\infty (\Omega)$ with $\F(\varphi_0) \in \mathrm{L}^1(\Omega), |\overline{\varphi_0}|<1$ and $\mathbf{h} \in \mathrm{L}^2_{\text{loc}}([0,\infty);\V_{\mathrm{div}}')$. Then, the weak solution $(\u,\varphi)$ corresponding to $(\u_0,\varphi_0)$  given by Theorem \ref{exist} is unique. Furthermore, for $i=1,2$, let $\z_i :=  (\u_i,\varphi_i)$ be two weak solutions corresponding to two initial data $\z_{0i} :=  (\u_{0i},\varphi_{0i})$ and external forces $\h_i$, with $\u_{0i} \in \G_{\mathrm{div}}$, $\varphi_{0i} \in \mathrm{L}^\infty (\Omega)$ such that $\F(\varphi_{0i}) \in \mathrm{L}^1(\Omega), |\overline{\varphi_{0i}}| < \eta ,$ for some constant $\eta \in [0,1)$ and $\h_i \in \mathrm{L}^2_{\text{loc}}([0,\infty);\V_{\mathrm{div}}')$. Then the following continuous dependence estimate holds:
		\begin{align*}
		&\| \u_2(t) - \u_1(t) \|^2 + \| \varphi_2(t) - \varphi_1(t) \|^2_{\mathrm{V}'} \\
		&\quad + \int_0^t \left( \frac{C_0}{2} \| \varphi_2(\tau) - \varphi_1(\tau) \|^2 + \frac{\nu}{4} \|\nabla( \u_2(\tau) - \u_1(\tau)) \|^2 \right) d \tau \\
		&\leq \left( \|\u_2(0) - \u_1(0) \|^2 + \| \varphi_2(0) - \varphi_1(0) \|^2_{\mathrm{V}'} \right) \Lambda_0(t) \\
		&\quad + \| \overline{\varphi}_2(0) - \overline{\varphi}_1(0)\| \mathbb{Q} \left( \mathcal{E}(z_{01}),\mathcal{E}(z_{02}),\| \h_1 \|_{\mathrm{L}^2(0,t;\V_{div}')},\| \h_2 \|_{\mathrm{L}^2(0,t;\V_{div}')}, \eta \right)  \Lambda_1(t) \\
		&\quad + \| \h_2 - \h_1 \|^2_{\mathrm{L}^2(0,T;\V_{\mathrm{div}}')} \Lambda_2(t) ,
		\end{align*} 
		for all $t \in [0,T]$, where $\Lambda_0(t)$, $\Lambda_1(t)$ and $\Lambda_2(t)$ are continuous functions, which depend on the norms of the two solutions. The functions  $\Lambda_i(t)$ also depend on $\F$, $\J$ and $\Omega$, and $\mathbb{Q}$ depends on $\F$, $\J$ ,$\Omega$ and $\eta$.
	\end{theorem}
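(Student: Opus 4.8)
The plan is to run the standard energy method on the difference of two weak solutions, measuring the phase-field discrepancy in the weaker norm $\mathrm{V}'$ (as demanded by the asserted estimate) while keeping the velocity discrepancy in $\G_{\mathrm{div}}$. Write $\z := \z_2 - \z_1 = (\u,\varphi)$ with $\u := \u_2 - \u_1$, $\varphi := \varphi_2 - \varphi_1$, $\mu := \mu_2 - \mu_1$ and $\h := \h_2 - \h_1$; these satisfy the difference of \eqref{4.1} in the weak sense. First I would record that the spatial mean of each $\varphi_i$ is conserved: integrating the phase equation and using $\mathrm{div}\,\u_i = 0$ together with the Neumann condition $\partial_{\mathbf{n}}\mu_i = 0$ gives $\frac{d}{dt}\overline{\varphi_i} = 0$, so that $\overline{\varphi}(t) \equiv \overline{\varphi_{02}} - \overline{\varphi_{01}}$ is constant in $t$. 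Splitting $\varphi = \mathbb{P}_0\varphi + \overline{\varphi}$ isolates this constant mean as the origin of the extra term $\|\overline{\varphi}_2(0) - \overline{\varphi}_1(0)\|$ in the final estimate.

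Next I would test the velocity difference equation with $\u$ and the phase difference equation with $\mathcal{B}^{-1}(\mathbb{P}_0\varphi)$, so that the latter produces $\tfrac{1}{2}\tfrac{d}{dt}\|\mathbb{P}_0\varphi\|_{\mathrm{V}'}^2$ via identities \eqref{bes}--\eqref{bes1}. The chemical potential contributes $(\mu,\mathbb{P}_0\varphi)$; writing $\mu = a\varphi - \J*\varphi + (\F'(\varphi_2)-\F'(\varphi_1))$ and expanding the potential difference by Taylor's theorem exactly as in \eqref{3.13}, the coercivity $\F''(s)+a(x) \geq C_0$ from (A9) yields the dissipative term $\tfrac{C_0}{2}\|\mathbb{P}_0\varphi\|^2$, while the convolution is absorbed through Young's inequality for convolutions and $\|\nabla\J\|_{\mathbb{L}^1}$ as in \eqref{3.17a}. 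On the Navier--Stokes side the convective difference reduces by \eqref{2.7} to $b(\u,\u_1,\u)$ and, together with the Korteweg term $(\mu_2\nabla\varphi_2 - \mu_1\nabla\varphi_1,\u)$, is controlled by Ladyzhenskaya's inequality \eqref{lady1} in dimension two and by the uniform energy bounds of Theorem \ref{exist}; the resulting time-dependent coefficients are what assemble into $\Lambda_0,\Lambda_1,\Lambda_2$.

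Summing the two identities, the left side becomes $\tfrac{1}{2}\tfrac{d}{dt}(\|\u\|^2 + \|\mathbb{P}_0\varphi\|_{\mathrm{V}'}^2) + \nu\|\nabla\u\|^2 + \tfrac{C_0}{2}\|\mathbb{P}_0\varphi\|^2 + \cdots$, and after absorbing the bad terms by Young's inequality and the two-dimensional interpolation $\|\cdot\|_{\mathbb{L}^4} \leq C\|\cdot\|^{1/2}\|\nabla\cdot\|^{1/2}$ one reaches a differential inequality $\tfrac{d}{dt}E(t) + \delta D(t) \leq \gamma(t)E(t) + R(t)$, where $E = \|\u\|^2 + \|\varphi\|_{\mathrm{V}'}^2$, $D = \tfrac{C_0}{2}\|\varphi\|^2 + \tfrac{\nu}{4}\|\nabla\u\|^2$, the factor $\gamma \in \L^1(0,T)$ has integral bounded in terms of the two solutions' norms, and $R(t)$ collects the forcing difference $\|\h\|_{\V_{\mathrm{div}}'}^2$ and the (constant) mean contribution $|\Omega|\,\overline{\varphi}^2$. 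The claimed inequality then follows from the integral Gronwall lemma, with $\Lambda_0 = \exp(\int_0^t\gamma)$ and $\Lambda_1,\Lambda_2$ the corresponding Duhamel factors; the quantity $\mathbb{Q}$ simply packages the dependence of the mean-value term on the energies $\mathcal{E}(\z_{0i})$, the forcing norms and the separation parameter $\eta$.

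The main obstacle, as in every treatment of the singular nonlocal potential, is the control of $\F'(\varphi_2) - \F'(\varphi_1)$ when the two solutions carry different spatial averages: the monotonicity of $\F'$ (modulo the quadratic perturbation) applies cleanly only to the mean-free part $\mathbb{P}_0\varphi$, so the mean discrepancy $\overline{\varphi}$ must be extracted separately and estimated through $\F''$ over the range of the solutions. This is precisely where the hypothesis $|\overline{\varphi_{0i}}| < \eta < 1$ enters: combined with the bound $\varphi_i \in \L^\infty(0,T;\L^{2+2q}(\Omega))$ from Theorem \ref{exist} and the structure of $\F$, it furnishes the uniform-in-time control of the quantities inside $\mathbb{Q}$ and keeps the singularity of $\F'$ at $\pm 1$ from breaking the Gronwall argument. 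Establishing these uniform bounds on $\F'(\varphi_i)$ and $\F''(\varphi_i)$, rather than the trilinear estimates (which are routine in two dimensions), is the genuinely delicate part of the proof.
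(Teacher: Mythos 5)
First, a structural point: the paper itself contains no proof of this statement. Theorem \ref{unique} is imported verbatim (as ``Theorem 3'' of \cite{MR3518604}) and used as a black box in Section \ref{se4}, so your proposal can only be measured against the argument in that reference. Your skeleton does match it: difference of solutions, velocity tested with $\u$, phase difference tested with $\mathcal{B}^{-1}\mathrm{P}_0\varphi$ so that \eqref{bes}--\eqref{bes1} produce the $\mathrm{V}'$-norm, coercivity of $a+\F''$ for the mean-free pairing, two-dimensional Ladyzhenskaya interpolation, and Gronwall. The issue is that the one step you yourself flag as ``the genuinely delicate part'' is exactly the step your proposal gets wrong.

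You propose to handle the mean discrepancy by estimating $\F'(\varphi_2)-\F'(\varphi_1)$ ``through $\F''$ over the range of the solutions,'' claiming that $|\overline{\varphi_{0i}}|<\eta$ together with $\varphi_i\in\L^\infty(0,T;\L^{2+2q}(\Omega))$ furnishes uniform control of the relevant quantities. No such control of $\F''$ exists: $\F''$ blows up at $\pm1$, weak solutions satisfy only $|\varphi_i(x,t)|<1$ a.e.\ with no uniform separation from $\pm1$, and neither the mean condition nor the $\L^{2+2q}$ bound prevents $\varphi_i$ from approaching $\pm1$; a Taylor factor $\F''(\xi)$ is therefore not bounded in any Lebesgue norm and the Gronwall argument breaks at this point. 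The mechanism in \cite{MR3518604} is different and never places $\F''$ on the mean part: after testing with $\mathcal{B}^{-1}\mathrm{P}_0\varphi$, the offending term is $\overline{\varphi}\int_\Omega\left(\F'(\varphi_2)-\F'(\varphi_1)\right)\d x$, which is bounded by $|\overline{\varphi}|\left(\|\F'(\varphi_1)\|_{\L^1}+\|\F'(\varphi_2)\|_{\L^1}\right)$, and the $\L^1$-norms of $\F'(\varphi_i)$ are controlled \emph{via the energy}, using the standard lemma for singular potentials with separated mean: $|\overline{\varphi_i}|\leq\eta<1$ implies $\|\F'(\varphi_i)\|_{\L^1}\leq C_\eta\left(\int_\Omega\F(\varphi_i)\d x+1\right)$, and $\int_\Omega\F(\varphi_i)\d x$ is bounded through \eqref{energy}. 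This is precisely what $\mathbb{Q}$ packages, and it is why the mean difference enters the final estimate \emph{linearly} as $\|\overline{\varphi}_2(0)-\overline{\varphi}_1(0)\|$; your accounting of the mean contribution as a quadratic source $|\Omega|\,\overline{\varphi}^2$ inside $R(t)$ is inconsistent with the inequality you are trying to prove, which is a symptom of the wrong mechanism. A secondary gap: you treat the Korteweg difference $(\mu_2\nabla\varphi_2-\mu_1\nabla\varphi_1,\u)$ by Ladyzhenskaya plus energy bounds, but a direct estimate produces either $\|\nabla\varphi\|$ (absent from your dissipation $D(t)$) or $\nabla\mu_1$ in $\mathbb{L}^4$ (unavailable for weak solutions); one must first exploit the nonlocal structure, $\mu\nabla\varphi=\nabla\left(\F(\varphi)+a\tfrac{\varphi^2}{2}-(\J*\varphi)\varphi\right)-\tfrac{\varphi^2}{2}\nabla a+(\nabla\J*\varphi)\varphi$, the evolutionary analogue of \eqref{nonlin u rewritten}, so that only $\|\varphi\|$, $\|\nabla\J\|_{\mathbb{L}^1}$ and $|\varphi_i|\leq1$ are needed, exactly as in the paper's own stationary uniqueness proof (Theorem \ref{unique-steady}).
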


	\begin{remark}[Remark 3, \cite{MR3019479}]
		The above theorems also imply $\u\in\C([0,T];\G_{\mathrm{div}})$ and $\varphi\in \C([0,T];\mathrm{H})$, for all $T>0$. 
	\end{remark}
	Now, we prove that the stationary solution of \eqref{4.1} is exponentially stable in two dimensions. Let $(\mathbf{u}^e, \varphi^e)$ be the steady-state solution of the system \eqref{3.2}. From Theorem \ref{mainthm}, we know that there exists a unique weak solution for the system \eqref{3.2}.	 Remember that for $\mathbf{h}\in\mathbb{G}_{\mathrm{div}}$,   the weak solution $(\mathbf{u}^e, \mu^e, \varphi^e)$ of the system \eqref{3.2} has the following regularity: 
	\begin{align*}
	\nabla \varphi^e \in\mathrm{L}^p(\Omega),\ \mu^{e} \in \mathrm{H}^2(\Omega)\ \text{ and } \ \mathbf{u}^e\in\mathbb{H}^2(\Omega), \qquad 2 \leq p < \infty.
	\end{align*}
	Now, in the system \eqref{4.1}, we assume that $\mathbf{h}\in\G_{\text{div }}$ is independent of time and the initial data $\u_0\in\G_{\mathrm{div}}$, $\varphi_0\in\mathrm{L}^{\infty}(\Omega)$. Then, there exists a unique weak solution $(\u,\varphi)$ of  \eqref{4.1} such that the system has the regularity given in \eqref{sol}.
	
	\begin{theorem}\label{thmexp} Let $\u_0\in\G_{\mathrm{div}}$, $\varphi_0\in\mathrm{L}^{\infty}(\Omega)$ and $\h\in\G_{\text{div }}$. Then under the Assumptions of Theorem \ref{unique-steady}, for
		\begin{align*}
		&(i)\ \nu^2>\frac{4}{\lambda_1 }\|\nabla\u^e\|^2,\\
		&(ii)\ (C_0-\|\J\|_{\mathrm{L}^1})^2>2 C_{\Omega}^2\left(\frac{1}{2m}\|\mathbf{u}^e\|_{\mathbb{L}^{\infty}}^2+\frac{4}{\nu \sqrt{\lambda_1}}\|\nabla\mu^e\|_{\mathbb{L}^4}^2\right), \ \text{ with }\ C_0>\|\J\|_{\mathrm{L}^1}, \\
		&(iii) \ \overline{\varphi}_0 = \overline{\varphi}^e,
		\end{align*}
		the stationary solution $(\mathbf{u}^e, \varphi^e)$ of \eqref{3.2}, with the regularity given in Theorem \ref{regularity}, is exponentially stable. That is, there exist constants $M>0$ and $\varrho>0$ such that 
		\begin{align*}
		\|\u(t)-\mathbf{u}^e\|^2+\|\varphi(t)-\varphi^e\|^2\leq M e^{-\varrho t},\end{align*} 
		for all $t\geq 0$, where $(\u, \varphi)$ is the unique weak solution of the system \eqref{4.1}, 
		$$M=\frac{\|\mathbf{y}_0\|^2 + 4\|\mathrm{J}\|_{\mathrm{L}^{1}}(\|\varphi_0\|^2+\|\varphi^{e}\|^2)+ \|\F(\varphi_0) \|_{\L^1} + \|\F(\varphi^e)\|_{\L^1} + \|\F'(\varphi^e)\| (\|\varphi_0\|+\|\varphi^{e}\|)}{\min\{(C_0-\|\mathrm{J}\|_{\mathrm{L}^1}),1\}}$$
		and 
		\begin{align}
		\varrho=\min\left\{\left(\lambda_1\nu-\frac{4}{\nu} \|\nabla\u^e\|^2\right) ,\left[\frac{m(C_0-\|\J\|_{\L^1})}{2C_{\Omega}^2}-\frac{1}{(C_0-\|\J\|_{\mathrm{L}^1})}\left(\|\mathbf{u}^e\|_{\mathbb{L}^{\infty}}^2+\frac{4}{\nu \sqrt{\lambda_1}}\|\nabla\mu^e\|_{\mathbb{L}^4}^2\right)\right]\right\}>0.
		\end{align}
		From the hypothesis of Theorems \ref{exist} and \ref{unique}, settings of Theorem \ref{mainthm} (see Lemma \ref{browderproof} also) and Remark \ref{F'phi}, note that the quantity $M$ is finite.
	\end{theorem}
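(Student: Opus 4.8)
The plan is to run a Lyapunov argument on the defect between the time-dependent solution of \eqref{4.1} and the stationary solution of \eqref{3.2}. Set $\y:=\u-\u^e$, $\phi:=\varphi-\varphi^e$ and $\xi:=\mu-\mu^e$; hypothesis (iii) gives $\overline{\phi}=\overline{\varphi}_0-\overline{\varphi}^e=0$, a fact that will be essential when I convert the chemical-potential dissipation into genuine decay. Subtracting \eqref{3.2} from \eqref{4.1} and decomposing the quadratic terms as $(\u\cdot\nabla)\u-(\u^e\cdot\nabla)\u^e=(\u\cdot\nabla)\y+(\y\cdot\nabla)\u^e$, $\u\cdot\nabla\varphi-\u^e\cdot\nabla\varphi^e=\u\cdot\nabla\phi+\y\cdot\nabla\varphi^e$ and $\mu\nabla\varphi-\mu^e\nabla\varphi^e=\xi\nabla\varphi+\mu^e\nabla\phi$, I obtain the evolution system satisfied by $(\y,\phi,\xi)$.

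As a Lyapunov functional I take the kinetic energy of the velocity defect together with the relative (Bregman) free energy of the order-parameter defect,
\begin{align*}
\Psi(t):=\tfrac12\|\y(t)\|^2+\mathscr{F}(\varphi(t))-\mathscr{F}(\varphi^e)-(\mu^e,\varphi(t)-\varphi^e),
\end{align*}
where $\mathscr{F}(\varphi):=\tfrac14\int_\Omega\int_\Omega\J(x-y)(\varphi(x)-\varphi(y))^2\d x\d y+\int_\Omega\F(\varphi)\d x$ and $\mu^e=\mathscr{F}'(\varphi^e)$. Since $\F$ is convex ($\kappa=0$ in \eqref{decomp of F}) and (A9) holds, a second-order Taylor expansion of $\mathscr{F}$ about $\varphi^e$ gives, for some $\vartheta$ on the segment joining $\varphi^e$ and $\varphi$, the identity $\mathscr{F}(\varphi)-\mathscr{F}(\varphi^e)-(\mu^e,\phi)=\tfrac12\big((a+\F''(\vartheta))\phi,\phi\big)-\tfrac12(\J*\phi,\phi)\ge\tfrac12(C_0-\|\J\|_{\mathrm{L}^1})\|\phi\|^2$. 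Hence $\Psi(t)\ge\tfrac12\min\{1,C_0-\|\J\|_{\mathrm{L}^1}\}(\|\y(t)\|^2+\|\phi(t)\|^2)$, which yields the denominator of $M$, while estimating $\Psi(0)$ and using Remark \ref{F'phi} to control $\|\F'(\varphi^e)\|$ gives its numerator.

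For the differential inequality I test the momentum defect equation with $\y$ (so that $b(\u,\y,\y)=0$) and, invoking the two-dimensional energy identity of Theorem \ref{exist} to justify differentiating $\mathscr{F}$, compute $\tfrac{\d}{\d t}\big[\mathscr{F}(\varphi)-\mathscr{F}(\varphi^e)-(\mu^e,\phi)\big]=\langle\xi,\phi_t\rangle$ and replace $\phi_t$ by the order-parameter defect equation. The genuinely coupled terms $\pm(\y\cdot\nabla\varphi^e,\xi)$ cancel between the two balances, leaving
\begin{align*}
\frac{\d}{\d t}\Psi=-\nu\|\nabla\y\|^2-m\|\nabla\xi\|^2-b(\y,\u^e,\y)+(\mu^e\nabla\phi,\y)-(\u^e\cdot\nabla\phi,\xi).
\end{align*}
The trilinear term is controlled by Ladyzhenskaya's and Young's inequalities and absorbed into $\nu\|\nabla\y\|^2$, producing the factor $\lambda_1\nu-\tfrac{4}{\nu}\|\nabla\u^e\|^2$, which is positive by (i). After integrating by parts, $(\mu^e\nabla\phi,\y)=-(\y\cdot\nabla\mu^e,\phi)$ and $(\u^e\cdot\nabla\phi,\xi)=-(\u^e\cdot\nabla\xi,\phi)$ are estimated through $\|\u^e\|_{\mathbb{L}^\infty}$ and $\|\nabla\mu^e\|_{\mathbb{L}^4}$ (finite by Theorem \ref{regularity}, since $\u^e\in\mathbb{H}^2\hookrightarrow\mathbb{L}^\infty$) and absorbed into $\nu\|\nabla\y\|^2$, $m\|\nabla\xi\|^2$ and the coercive $\|\phi\|^2$.

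The decisive step is to trade the dissipation $m\|\nabla\xi\|^2$, which lives in the gradient of the chemical potential, for decay of $\Psi$. Because $\overline{\phi}=0$, Poincaré–Wirtinger gives $(\xi,\phi)=(\xi-\overline{\xi},\phi)\le C_\Omega\|\nabla\xi\|\,\|\phi\|$, while convexity and (A9) give the coercivity $(\xi,\phi)=\big((a+\F''(\vartheta))\phi,\phi\big)-(\J*\phi,\phi)\ge(C_0-\|\J\|_{\mathrm{L}^1})\|\phi\|^2$; eliminating $\|\phi\|$ between these yields $\|\nabla\xi\|^2\ge\tfrac{C_0-\|\J\|_{\mathrm{L}^1}}{C_\Omega^2}(\xi,\phi)$, so that $m\|\nabla\xi\|^2$ dominates a positive multiple of $\mathscr{F}(\varphi)-\mathscr{F}(\varphi^e)-(\mu^e,\phi)$ with the rate $\tfrac{m(C_0-\|\J\|_{\mathrm{L}^1})}{2C_\Omega^2}$ appearing in $\varrho$. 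Collecting the velocity and order-parameter contributions and using condition (ii) to keep the net order-parameter coefficient strictly positive gives $\tfrac{\d}{\d t}\Psi\le-\varrho\,\Psi$ with $\varrho$ exactly as stated; Gronwall's lemma then yields $\Psi(t)\le\Psi(0)e^{-\varrho t}$, and the lower bound on $\Psi$ from the second paragraph converts this into $\|\y(t)\|^2+\|\phi(t)\|^2\le Me^{-\varrho t}$. I expect the main obstacle to be precisely this order-parameter block: the singular potential rules out naive time differentiation, and one must simultaneously exploit convexity, the mean-zero constraint (iii), and Poincaré–Wirtinger to exchange the natural $\|\nabla\xi\|^2$ dissipation for genuine $\|\phi\|^2$ decay, all while absorbing the stationary-solution-dependent cross terms — and it is this balancing that forces the quantitative smallness conditions (i) and (ii).
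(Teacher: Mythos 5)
Your proposal is correct and follows essentially the same route as the paper's proof: the same defect decomposition, the same Lyapunov functional (your Bregman relative free energy is exactly the quantity whose time derivative the paper identifies as $(\psi_t,\widetilde{\mu})$, and which the Taylor identity reduces to $(\widetilde{\mu},\psi)$), the same cancellation of the coupled transport terms, the same Ladyzhenskaya/Young absorption producing condition (i), and the same decisive exchange of $m\|\nabla\widetilde{\mu}\|^2$ for decay of $(\widetilde{\mu},\psi)$ via the mean-zero constraint, Poincar\'e--Wirtinger and (A9), finishing with a Gronwall-type inequality (the paper's Lemma \ref{C3}). The only cosmetic difference is that you apply Gronwall directly to $\Psi$ while the paper invokes its two-coefficient variation-of-constants lemma, and your rate constant $\frac{m(C_0-\|\J\|_{\mathrm{L}^1})}{2C_{\Omega}^2}$ actually matches the theorem statement, correcting a dropped factor of $m$ in the paper's intermediate display \eqref{436}.
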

	
	\begin{proof}
		Let us define $\mathbf{y}:=\mathbf{u}-\mathbf{u}^e$, $\psi:=\varphi- \varphi^e$, $ \widetilde{\mu} = \mu - \mu^e $ and $\widetilde{\uppi} :=\uppi - \uppi^e$. Then we know that $(\mathbf{y},\psi )$ satisfies the following system in the weak sense: 
		\begin{eqnarray}\label{4.3}
		\left\{
		\begin{aligned} 
		\psi_t + \mathbf{y} \cdot \nabla \psi + \mathbf{y} \cdot \nabla \varphi^e + \mathbf{u}^e \cdot \nabla \psi =& m\Delta \widetilde{\mu}, \ \text{ in }\ \Omega\times (0,T),\\ 
		\widetilde{\mu}=& a\psi - \mathrm{J}*\psi + \mathrm{F}'(\psi+\varphi^e )- \mathrm{F}'(\varphi^e),\\
		\mathbf{y}_t- \nu \Delta \mathbf{y} + (\mathbf{y }\cdot \nabla ) \mathbf{y} + (\mathbf{y} \cdot \nabla ) \mathbf{u}^e+ (\mathbf{u}^e \cdot \nabla ) \mathbf{y} + \nabla \widetilde{\uppi} =& \widetilde{\mu} \nabla \psi + \widetilde{\mu} \nabla \varphi^e + \mu^e \nabla \psi,\\&\qquad \qquad \qquad \ \text{ in }\ \Omega\times (0,T),   \\
		\text{div }\mathbf{y} = &0, \ \text{ in }\ \Omega\times (0,T),\\
		\frac{\partial \widetilde{\mu}}{ \partial \mathbf{n}}=0, \ \mathbf{y}=&0, \ \text{ on }\ \partial \Omega\times(0,T),\\
		\mathbf{y}(0) = \mathbf{y}_0, \ \psi(0)=&\psi_0, \ \text{ in }\ \Omega.
		\end{aligned}
		\right.
		\end{eqnarray}
		Now consider third equation of \eqref{4.3} and take inner product with $\mathbf{y}(\cdot)$ to obtain 
		\begin{align}
		\label{4.4}
		&\frac{1}{2}\frac{\d}{\d t}\|\mathbf{y}(t)\|^2+ \nu \|\nabla \mathbf{y}(t)\|^2 + b(\mathbf{y}(t),\mathbf{u}^e,\mathbf{y}(t)) \nonumber\\&\quad= (\widetilde{\mu}(t)\nabla\psi(t),\mathbf{y}(t))+ (\widetilde{\mu}(t) \nabla \varphi^e,\mathbf{y}(t)) + (\mu^e \nabla \psi(t),\mathbf{y}(t)),
		\end{align} 
		where we used the fact that $b(\mathbf{y},\mathbf{y},\mathbf{y})=b(\mathbf{u}^e,\mathbf{y},\mathbf{y})=0$ and $(\nabla\widetilde{\uppi},\mathbf{y})=(\widetilde{\uppi},\nabla\cdot\mathbf{y})=0$. 
		An integration by parts yields 
		\begin{align*}
		(\widetilde{\mu}\nabla\psi,\mathbf{y})=-(\psi\nabla\widetilde{\mu},\mathbf{y})-(\psi\widetilde{\mu},\nabla\cdot\mathbf{y})=-(\psi\nabla\widetilde{\mu},\mathbf{y}),
		\end{align*}
		where we used the boundary data and divergence free condition of $\mathbf{y}$. Similarly, we have $(\widetilde{\mu} \nabla \varphi^e,\mathbf{y}) =-(\varphi^e\nabla \widetilde{\mu} ,\mathbf{y})$ and $ (\mu^e \nabla \psi,\mathbf{y})=-( \psi\nabla \mu^e ,\mathbf{y})$. Thus from \eqref{4.4}, we have 
		\begin{align}
		\label{4.5}
		&\frac{1}{2}\frac{\d}{\d t}\|\mathbf{y}(t)\|^2+ \nu \|\nabla \mathbf{y}(t)\|^2 + b(\mathbf{y}(t),\mathbf{u}^e,\mathbf{y}(t)) \nonumber\\&\quad = -(\psi(t)\nabla\widetilde{\mu}(t),\mathbf{y}(t))-(\varphi^e\nabla \widetilde{\mu}(t) ,\mathbf{y}(t))-( \psi(t)\nabla \mu^e ,\mathbf{y}(t)).
		\end{align} 
		Taking inner product of the third equation in \eqref{4.3} with $\widetilde{\mu}(\cdot)$, we obtain 
		\begin{align} \label{4.6}
		(\psi_t(t),\widetilde{\mu}(t) )+ m\|\nabla \widetilde{\mu}(t)\|^2= -( \mathbf{y}(t) \cdot \nabla \psi (t), \widetilde{\mu}(t))- (\mathbf{y}(t) \cdot \nabla \varphi^e, \widetilde{\mu}(t)) - (\mathbf{u}^e \cdot \nabla \psi(t), \widetilde{\mu}(t)). 
		\end{align}
		Using an integration by parts, divergence free condition and boundary value of $\mathbf{y}$, we get 
		\begin{align*}
		( \mathbf{y} \cdot \nabla \psi ,\widetilde{\mu})=-(\psi\nabla\cdot\mathbf{y},\widetilde{\mu})-(\psi\nabla\widetilde{\mu},\mathbf{y})=-(\psi\nabla\widetilde{\mu},\mathbf{y}).
		\end{align*}
		Similarly, we have $(\mathbf{y} \cdot \nabla \varphi^e, \widetilde{\mu}) =-(  \varphi^e \nabla \widetilde{\mu}, \mathbf{y}) $ and $ (\mathbf{u}^e \cdot \nabla \psi, \widetilde{\mu})= -( \psi  \nabla \widetilde{\mu}, \mathbf{u}^e)$. Thus, from \eqref{4.6}, it is immediate that 
		\begin{align}
		\label{4.7}
		(\psi_t(t),\widetilde{\mu} (t))+ m\|\nabla \widetilde{\mu}(t)\|^2= (\psi(t)\nabla\widetilde{\mu}(t),\mathbf{y}(t))+(  \varphi^e \nabla \widetilde{\mu}(t), \mathbf{y}(t)) + ( \psi (t) \nabla \widetilde{\mu}(t), \mathbf{u}^e). 
		\end{align}
		Adding \eqref{4.5} and \eqref{4.7}, we infer that
		\begin{align}\label{4.8}
		&	\frac{1}{2}\frac{\d}{\d t}\|\mathbf{y}(t)\|^2 + \nu \|\nabla \mathbf{y}(t)\|^2 +(\psi_t(t),\widetilde{\mu}(t) )+ m\|\nabla \widetilde{\mu}(t)\|^2\nonumber\\
		&\quad= - b(\mathbf{y}(t),\mathbf{u}^e,\mathbf{y}(t))-( \psi(t)\nabla \mu^e ,\mathbf{y}(t))+ ( \psi(t)  \nabla \widetilde{\mu}(t), \mathbf{u}^e). 
		\end{align}
		We estimate the term $(\psi_t,\widetilde{\mu} )$  from \eqref{4.8} as 
		\begin{align}\label{4.10}
		(\psi_t, \widetilde{\mu})&= (\psi_t,a\psi - \mathrm{J}*\psi + \mathrm{F}'(\psi+\varphi^e )- \mathrm{F}'(\varphi^e) ) \\ \nonumber
		&=\frac{\d}{\d t} \left\{ \frac{1}{2} \|\sqrt{a} \psi \|^2 -\frac{1}{2} (\mathrm{J}*\psi,\psi) + \int_\Omega \mathrm{F}(\psi+\varphi^e) \d x- (\mathrm{F}'(\varphi^e),\psi)\right\} \nonumber\\
		&= \frac{\d}{\d t} \left\{ \frac{1}{2} \|\sqrt{a} \psi \|^2 -\frac{1}{2} (\mathrm{J}*\psi,\psi) + \int_\Omega \mathrm{F}(\psi+\varphi^e) \d x-\int_\Omega \mathrm{F}(\varphi^e) \d x- \int_{\Omega}\mathrm{F}'(\varphi^e)\psi\d x\right\}, \label{6.16}
		\end{align}
		since $\frac{\d}{\d t}\left(\int_\Omega \mathrm{F}(\varphi^e) \d x\right)=0$. Using H\"older's, Ladyzhenskaya's and Young's inequalities, we estimate $b(\mathbf{y},\mathbf{u}^e,\mathbf{y})$ as 
		\begin{align}\label{4.13}
		|b(\mathbf{y},\mathbf{u}^e,\mathbf{y})|\leq \|\nabla\mathbf{u}^e\|\|\mathbf{y}\|_{\mathbb{L}^4}^2\leq \sqrt{2}\|\nabla\mathbf{u}^e\|\|\mathbf{y}\|\|\nabla\y\|\leq \frac{\nu}{4}\|\nabla\y\|^2+\frac{2}{\nu}\|\nabla\u^e\|^2\|\y\|^2.
		\end{align}
		We estimate the term $( \psi\nabla \mu^e ,\mathbf{y})$ from \eqref{4.8} using H\"older's, Ladyzhenskaya's and Young's inequalities as 
		\begin{align}\label{4.14}
		|( \psi\nabla \mu^e ,\mathbf{y})|&\leq \|\psi\|\|\nabla\mu^e\|_{\mathbb{L}^4}\|\y\|_{\mathbb{L}^4}\leq \sqrt{2}\|\psi\|\|\nabla\mu^e\|_{\mathbb{L}^4}\|\y\|^{1/2}\|\nabla\y\|^{1/2}\nonumber\\
		&\leq \sqrt{2} (\lambda_1)^{1/4}\|\psi\|   \|\nabla\mu^e\|_{\mathbb{L}^4} \|\nabla\y\|\nonumber\\
		&\leq \frac{\nu}{4}\|\nabla\y\|^2+ \frac{2}{\nu \sqrt{\lambda_1}}\|\nabla\mu^e\|_{\mathbb{L}^4}^2 \|\psi\|^2.
		\end{align}
		Similarly, we estimate the term $ ( \psi  \nabla \widetilde{\mu}, \mathbf{u}^e)$ from \eqref{4.8} as
		\begin{align}\label{4.15}
		|( \psi  \nabla \widetilde{\mu}, \mathbf{u}^e)|&\leq\|\psi\|\|\nabla \widetilde{\mu}\|\|\mathbf{u}^e\|_{\mathbb{L}^{\infty}}\leq \frac{m}{2}\|\nabla \widetilde{\mu}\|^2+\frac{1}{2m}\|\mathbf{u}^e\|_{\mathbb{L}^{\infty}}^2\|\psi\|^2.
		\end{align}
		Combining \eqref{6.16}-\eqref{4.15} and substituting it in \eqref{4.8}, we obtain 
		\begin{align}
		\label{4.16}
		&	\frac{\d}{\d t}\left\{\frac{1}{2}\|\mathbf{y}\|^2 + \frac{1}{2} \|\sqrt{a} \psi \|^2 -\frac{1}{2} (\mathrm{J}*\psi,\psi) + \int_\Omega \mathrm{F}(\psi+\varphi^e) \d x-\int_\Omega \mathrm{F}(\varphi^e) \d x- \int_{\Omega}\mathrm{F}'(\varphi^e)\psi\d x\right\} \no \\
		&+ \frac{\nu}{2} \|\nabla \mathbf{y}\|^2 + \frac{m}{2}\|\nabla \widetilde{\mu}\|^2 	\leq \left(\frac{1}{2m}\|\mathbf{u}^e\|_{\mathbb{L}^{\infty}}^2+\frac{2}{\nu \sqrt{\lambda_1}}\|\nabla\mu^e\|_{\mathbb{L}^4}^2\right)\|\psi\|^2+\frac{2}{\nu}\|\nabla\u^e\|^2\|\y\|^2.
		\end{align}
		Using Taylor's series expansion and Assumption (A9),  we also obtain 
		\begin{align}\label{433}
		(\widetilde{\mu},\psi)&=(a\psi - \mathrm{J}*\psi + \mathrm{F}'(\psi+\varphi^e )- \mathrm{F}'(\varphi^e), \psi)\nonumber\\
		&=(a\psi+\mathrm{F}''(\varphi^e +\theta\psi)\psi,\psi)-(\mathrm{J}*\psi,\psi)\nonumber\\
		&\geq (C_0-\|\mathrm{J}\|_{\mathrm{L}^1})\|\psi\|^2,
		\end{align}
		for some $0<\theta<1$.  Observe  that $(\overline{\tilde{\mu}}, \psi)=\overline{\tilde{\mu}}(1,\psi)=0$, since $\overline{\varphi}_0=\overline{\varphi}= \overline{\varphi}^e$. Thus  an application of Poincar\'e inequality yields 
		\begin{align}\label{exp1}
		(\tilde{\mu}, \psi)=(\tilde{\mu}-\overline{\tilde{\mu}}, \psi)\leq\|\tilde{\mu}-\overline{\tilde{\mu}}\|\|\psi\|\leq C_\Omega\|\psi\| \|\nabla \tilde{\mu}\|.
		\end{align}
		Hence, from \eqref{433} and \eqref{exp1}, we get 
		\begin{align} \label{exp3}
			\|\psi\| \leq \frac{C_\Omega}{C_0 - \|\J\|_{\L^1}} \|\nabla \tilde{\mu}\|,
		\end{align}
		which implies from \eqref{exp1} that
		\begin{align}
			(\tilde{\mu}, \psi) \leq \frac{C_\Omega^2}{C_0 - \|\J\|_{\L^1}}\|\nabla \tilde{\mu}\|^2 \label{exp2}
		\end{align}
		Using Poincar\'e's inequality and \eqref{exp3} in \eqref{4.16}, we get
		\begin{align}
		&	\frac{\d}{\d t}\left\{\frac{1}{2}\|\mathbf{y}\|^2 + \frac{1}{2} \|\sqrt{a} \psi \|^2 -\frac{1}{2} (\mathrm{J}*\psi,\psi) + \int_\Omega \mathrm{F}(\psi+\varphi^e) \d x-\int_\Omega \mathrm{F}(\varphi^e) \d x- \int_{\Omega}\mathrm{F}'(\varphi^e)\psi\d x\right\} \no \\
		&+ \left( \frac{\nu \lambda_1}{2} -\frac{2}{\nu}\|\nabla\u^e\|^2 \right)\|\y\|^2+ \left( \frac{m}{2} -\left( \frac{1}{2m}\|\mathbf{u}^e\|_{\mathbb{L}^{\infty}}^2-\frac{2}{\nu \sqrt{\lambda_1}}\|\nabla\mu^e\|_{\mathbb{L}^4}^2 \right) \frac{C_\Omega^2}{(C_0 - \|\J\|_{\L^1})^2}\right) \|\nabla \widetilde{\mu}\|^2 	\leq 0.
		\end{align}
		Using Taylor's formula, we have the following identities:
		\begin{align} \label{exp4}
		&\frac{1}{2} \|\sqrt{a} \psi \|^2 -\frac{1}{2} (\mathrm{J}*\psi,\psi)+\int_\Omega \left[\mathrm{F}(\psi+\varphi^e)-\mathrm{F}(\varphi^e)- \mathrm{F}'(\varphi^e)\psi\right]\d x \no \\
		& =(a\psi - \mathrm{J}*\psi, \psi)+ \frac{1}{2}\int_{\Omega}\mathrm{F}''(\varphi^{e}+\theta\psi)\psi^2\d x, \no \\
		&=(a\psi - \mathrm{J}*\psi + \mathrm{F}'(\psi+\varphi^e )- \mathrm{F}'(\varphi^e), \psi) = (\tilde{\mu}, \psi)
		\end{align}
		for some $0<\theta<1$. 
		Using (A9) and Young's inequality for convolutions, we know that 
		\begin{align*}
		\int_{\Omega}\left(a+\mathrm{F}''(\varphi^{e}+\theta\psi)\right)\psi^2\d x\geq C_0\|\psi\|^2\geq \|\J\|_{\mathbb{L}^1}\|\psi\|^2 
		\geq (\J*\psi,\psi). 
		\end{align*}
		Thus, we have 
		\begin{align*}
		\int_{\Omega}\left(a+\mathrm{F}''(\varphi^{e}+\theta\psi)\right)\psi^2\d x- (\mathrm{J}*\psi,\psi)\geq 0.
		\end{align*}
		Hence from \eqref{exp2}, we obtain 
		\begin{align}\label{436}
		&	\frac{\d}{\d t}\left[\|\mathbf{y}\|^2 + (\widetilde{\mu},\psi)\right] + \left(\nu \lambda_1-\frac{4}{\nu}\|\nabla\u^e\|^2\right) \|\mathbf{y}\|^2 \nonumber\\&
		\quad + \left[\frac{(C_0-\|\J\|_{\mathrm{L}^1})}{2C_{\Omega}^2}-\frac{1}{(C_0-\|\J\|_{\mathrm{L}^1})}\left(\frac{1}{2m}\|\mathbf{u}^e\|_{\mathbb{L}^{\infty}}^2+\frac{4}{\nu \sqrt{\lambda_1}}\|\nabla\mu^e\|_{\mathbb{L}^4}^2\right)\right] (\widetilde{\mu},\psi)\leq 0.
		\end{align}
		Now for $C_0>\|\J\|_{\mathrm{L}^1}$ and 
		\begin{align}
		\nu^2&>\frac{4}{\lambda_1 }\|\nabla\u^e\|^2, \ \text {and }\ 
		(C_0-\|\J\|_{\mathrm{L}^1})^2>2 C_{\Omega}^2\left(\frac{1}{2m}\|\mathbf{u}^e\|_{\mathbb{L}^{\infty}}^2+\frac{4}{\nu \sqrt{\lambda_1}}\|\nabla\mu^e\|_{\mathbb{L}^4}^2\right),
		\end{align}
		we use the variation of constants formula (see Lemma \ref{C3}) to find 
		\begin{align} \label{s51}
		\|\mathbf{y}(t)\|^2 + (\widetilde{\mu}(t),\psi(t))\leq \left(\|\mathbf{y}(0)\|^2 + (\widetilde{\mu}(0),\psi(0))\right)e^{-\varrho t},
		\end{align}
		for all $t\geq 0$, where 
		\begin{align}
		\varrho=\min\left\{\left(\lambda_1\nu-\frac{4}{\nu} \|\nabla\u^e\|^2\right) ,\left[\frac{(C_0-\|\J\|_{\mathrm{L}^1})}{2C_{\Omega}^2}-\frac{1}{(C_0-\|\J\|_{\mathrm{L}^1})}\left(\frac{1}{2m}\|\mathbf{u}^e\|_{\mathbb{L}^{\infty}}^2+\frac{4}{\nu \sqrt{\lambda_1}}\|\nabla\mu^e\|_{\mathbb{L}^4}^2\right)\right]\right\}>0.
		\end{align}
		From \eqref{433}, we also have $ (C_0-\|\mathrm{J}\|_{\mathrm{L}^1})\|\psi\|^2\leq (\widetilde{\mu},\psi),$ so that from \eqref{s51}, we infer that 
		\begin{align}\label{442}
		\|\mathbf{y}(t)\|^2 + (C_0-\|\mathrm{J}\|_{\mathrm{L}^1})\|\psi\|^2\leq \left(\|\mathbf{y}(0)\|^2 + (\widetilde{\mu}(0),\psi(0))\right)e^{-\varrho t}.
		\end{align}
		Using H\"older's inequality, Young's inequality for convolutions and assumption (A1) in the identity \eqref{exp4}, we obtain 
			\begin{align}
			(\widetilde{\mu}(0),\psi(0)) &= (a \psi_0, \psi_0)-(\J*\psi_0, \psi_0) + \int_\Omega \F(\varphi_0) \d x+ \int_\Omega \F(\varphi^e)\d x - \int_\Omega \F'(\varphi^e) \psi_0\d x\no \\
			&\leq 2\|\mathrm{J}\|_{\mathrm{L}^{1}}\|\psi_0\|^2+ \|\F(\varphi_0) \|_{\L^1} + \|\F(\varphi^e)\|_{\L^1} + \|\F'(\varphi^e)\|_{\L^1} \|\psi_0\|_{\L^{\infty}} ,
			\end{align}
		where we have used the boundedness of $\psi_0$ and $\|a\|_{\mathrm{L}^{\infty}} \leq \|\mathrm{J}\|_{\mathrm{L}^{1}}$.	Hence from \eqref{442}, we finally have 
		\begin{align}\label{444}
		\|\mathbf{y}(t)\|^2 + \|\psi(t)\|^2  \leq \left( \frac{\|\mathbf{y}_0\|^2 + 2\|\mathrm{J}\|_{\mathrm{L}^{1}}\|\psi_0\|^2+ \|\F(\varphi_0) \|_{\L^1} + \|\F(\varphi^e)\|_{\L^1} + \|\F'(\varphi^e)\| \|\psi_0\| }{\min\{(C_0-\|\mathrm{J}\|_{\mathrm{L}^1}),1\}} \right) e^{-\varrho t},
		\end{align}
		which completes the proof. 	
	\end{proof}	

	\appendix 
	\section{Variation of Constants Formula}
	\renewcommand{\thesection}{\Alph{section}}
	\numberwithin{equation}{section}\setcounter{equation}{0}
	In this appendix, we give a variant of variation of constants formula, which is useful when we have two or more differentiable functions with different constant coefficients. 
	\begin{lemma}[Lemma C.3, \cite{MR4120851}]\label{C3}
		Assume that the differentiable functions $y(\cdot),z(\cdot):[0,T]\to[0,\infty)$ and the constants $a_1,a_2,k_1,k_2,k_3>0$ satisfy:
		\begin{align}\label{c1}
		\frac{\d }{\d t}(a_1 y(t)+a_2z(t))+k_1y(t)+k_2z(t)\leq 0,
		\end{align}
		for all $t\in[0,T]$. Then, we have 
		\begin{align}\label{c2}
		y(t)+z(t)\leq C(y(0)+z(0))e^{-\uprho t},\ \text{ where }\ C=\frac{\max\{a_1,a_2\}}{\min\{a_1,a_2\}}\ \text{ and }\ \uprho=\min\left\{\frac{k_1}{a_1},\frac{k_2}{a_2}\right\}.
		\end{align}
	\end{lemma}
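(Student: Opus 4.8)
The plan is to treat the weighted combination $W(t) := a_1 y(t) + a_2 z(t)$ as a Lyapunov-type functional and to collapse the coupled differential inequality \eqref{c1} into a single scalar Gronwall inequality for $W$. The entire argument rests on one elementary but decisive observation: since $y(t), z(t) \geq 0$ and $a_1, a_2 > 0$, each summand $a_1 y(t)$ and $a_2 z(t)$ is nonnegative, so the dissipative term $k_1 y + k_2 z$ can be bounded below by a multiple of $W$ itself.

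First I would establish the pointwise lower bound on the dissipation. Writing $k_1 y + k_2 z = \frac{k_1}{a_1}(a_1 y) + \frac{k_2}{a_2}(a_2 z)$ and using $a_1 y, a_2 z \geq 0$, one obtains
\begin{align*}
k_1 y(t) + k_2 z(t) \geq \min\left\{\frac{k_1}{a_1}, \frac{k_2}{a_2}\right\}\big(a_1 y(t) + a_2 z(t)\big) = \uprho\, W(t).
\end{align*}
Substituting this into \eqref{c1} yields the scalar differential inequality $W'(t) + \uprho\, W(t) \leq 0$ on $[0,T]$. Multiplying by the integrating factor $e^{\uprho t}$ gives $\frac{\d}{\d t}\big(e^{\uprho t} W(t)\big) \leq 0$, hence $e^{\uprho t} W(t) \leq W(0)$, that is $W(t) \leq W(0)\, e^{-\uprho t}$ for every $t \in [0,T]$; this is the heart of the decay estimate.

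Finally I would convert the statement about $W$ back into one about $y + z$ by a two-sided comparison. On one side, $\min\{a_1,a_2\}\big(y(t)+z(t)\big) \leq a_1 y(t) + a_2 z(t) = W(t)$; on the other, $W(0) = a_1 y(0) + a_2 z(0) \leq \max\{a_1,a_2\}\big(y(0)+z(0)\big)$. Chaining these with the decay bound for $W$ produces
\begin{align*}
\min\{a_1,a_2\}\big(y(t)+z(t)\big) \leq W(0)\,e^{-\uprho t} \leq \max\{a_1,a_2\}\big(y(0)+z(0)\big) e^{-\uprho t},
\end{align*}
and dividing through by $\min\{a_1,a_2\}$ gives exactly \eqref{c2} with $C = \max\{a_1,a_2\}/\min\{a_1,a_2\}$.

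I do not anticipate a genuine obstacle: the only step requiring care is the nonnegativity of $y$ and $z$, which is precisely what legitimizes the lower bound $k_1 y + k_2 z \geq \uprho\, W$ and which would fail for sign-changing functions (so this hypothesis must be used explicitly rather than silently). I would also note that the constant $k_3$ listed among the assumptions appears neither in \eqref{c1} nor in \eqref{c2} and therefore plays no role in the proof; it can be disregarded.
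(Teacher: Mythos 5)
Your proposal is correct and takes essentially the same route as the paper: both arguments reduce \eqref{c1} to the scalar inequality $\frac{\d}{\d t}\left(a_1y(t)+a_2z(t)\right)+\uprho\left(a_1y(t)+a_2z(t)\right)\leq 0$ for the weighted sum and integrate it, then pass to $y+z$ via $\min\{a_1,a_2\}$ and $\max\{a_1,a_2\}$. The only difference is a streamlining: where the paper splits into the two exhaustive cases $\frac{k_1}{a_1}\leq\frac{k_2}{a_2}$ and $\frac{k_2}{a_2}\leq\frac{k_1}{a_1}$ (obtaining each candidate decay rate separately and combining), your single pointwise bound $k_1y+k_2z\geq\uprho\,(a_1y+a_2z)$ handles both at once, and your explicit observations that nonnegativity of $y,z$ is what legitimizes this bound and that the constant $k_3$ is vestigial are both accurate.
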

	\begin{proof}
		Since $a_1>0$, from \eqref{c1}, we have 
		\begin{align*}
		\frac{\d }{\d t}\left(y(t)+\frac{a_2}{a_1}z(t)\right)+\frac{k_1}{a_1}\left(y(t)+\frac{k_2}{k_1}z(t)\right)\leq 0.
		\end{align*}
		Now, for $\frac{a_2}{a_1}\leq \frac{k_2}{k_1}$, from the above inequality, we also have 
		\begin{align*}
		\frac{\d }{\d t}\left(y(t)+\frac{a_2}{a_1}z(t)\right)+\frac{k_1}{a_1}\left(y(t)+\frac{a_2}{a_1}z(t)\right)\leq 0.
		\end{align*}
		From the above relation, it is immediate that 
		\begin{align*}
		\frac{\d}{\d t}\left[e^{\frac{k_1}{a_1}t}\left(y(t)+\frac{a_2}{a_1}z(t)\right)\right]\leq 0,
		\end{align*}
		which easily implies 
		\begin{align}\label{c3}
		a_1y(t)+a_2z(t)\leq (a_1y(0)+a_2z(0))e^{-\frac{k_1}{a_1}t}.
		\end{align}
		We can do a similar calculation by a division with $a_2>0$ and for $\frac{a_1}{a_2}\leq \frac{k_1}{k_2}$,   we arrive at 
		\begin{align}\label{c4}
		a_1y(t)+a_2z(t)\leq (a_1y(0)+a_2z(0))e^{-\frac{k_2}{a_2}t}.
		\end{align}
		Combining \eqref{c3} and \eqref{c4}, we finally obtain \eqref{c2}. 
	\end{proof}

	\subsection*{Acknowledgements} 
	The authors  thank the anonymous referee for her/his careful reading of the manuscript and for the valuable comments and suggestions, which helped us to improve the manuscript significantly. The authors would like to thank Dr. Dhanya Rajendran from IISER Thiruvananthapuram, INDIA, for valuable discussions and suggestions by her. M. T. Mohan would  like to thank the Department of Science and Technology (DST), Govt of India for Innovation in Science Pursuit for Inspired Research (INSPIRE) Faculty Award (IFA17-MA110).
	
	\bibliographystyle{plain}
	\bibliography{ref_stationary} 
	
\end{document}